\documentclass[a4paper,10pt,reqno,twoside]{amsart}
\usepackage{amsmath,amsthm,amssymb,enumerate,mathtools,stmaryrd,diagbox}
\usepackage{pgfplots}
\usepackage{cancel,soul}
\usepackage[latin1]{inputenc}
\usepackage{bbm}
\usepackage{esint}
\usepackage{multirow}
\usepackage{euscript,mathrsfs}
\usepackage{xcolor}
\usepackage[left=3.25cm,right=3.25cm,top=3.0cm,bottom=2.5cm]{geometry}
\usepackage[colorlinks=true, linktocpage=true, linkcolor=red!70!black, citecolor=green!50!black, urlcolor=black]{hyperref}
\usepackage{breakurl}
\usepackage{colortbl}

\mathtoolsset{showonlyrefs}

\allowdisplaybreaks
\usepackage{tikz}
\usepackage{scalerel}

\usetikzlibrary{decorations.pathmorphing}

\tikzset{snake it/.style={decorate, decoration=snake}}

\usepackage[colorinlistoftodos,prependcaption,textsize=tiny]{todonotes}

\usepackage{enumitem}
\setenumerate{label={\rm (\alph{*})}}

\numberwithin{equation}{section}

\newcommand{\bbone}{\text{\usefont{U}{bbold}{m}{n}1}}
\MakeRobust{\bbone}
\newcommand{\R}{\mathbb R}

\newcommand{\D}{\mathrm{D}}

\newcommand{\lebe}{\operatorname{L}}
\newcommand{\sobo}{\operatorname{W}}
\newcommand{\hold}{\operatorname{C}}

\newcommand{\ball}{\mathrm{B}}
\newcommand{\dashint}{\fint}

\newcommand{\locc}{\mathrm{loc}}

\newcommand{\dif}{\!\mathrm{d}}
\newcommand{\mres}{\mathbin{\vrule height 1.6ex depth 0pt width
0.13ex\vrule height 0.13ex depth 0pt width 1.3ex}}

\newcommand{\bv}{\mathrm{BV}}

\theoremstyle{plain}
\newtheorem{theorem}{Theorem}[section]
\newtheorem{lemma}[theorem]{Lemma}
\newtheorem{proposition}[theorem]{Proposition}
\newtheorem{corollary}[theorem]{Corollary}

\theoremstyle{remark}
\newtheorem{remark}[theorem]{Remark}

\begin{document}
\numberwithin{equation}{section}

\colorlet{RED}{red}
\title[Sharp ellipticity thresholds]{The sharp ellipticity threshold \\ for $\sobo^{1,1}$-regularity}
\author[F. Gmeineder]{{Franz Gmeineder}}
\address[Franz Gmeineder]{Department of Mathematics, Technical University of Darmstadt, Schlossgartenstrasse 7, 64289 Darmstadt, Germany}
\email{gmeineder@mathematik.tu-darmstadt.de}
\email{franz.gmeineder@uni-konstanz.de \textrm{\emph{(expiring address, valid until September 30, 2026)}}}
\subjclass[2020]{26B30, 35B33,	35B38, 35B65, 35J93, 49J45}
\keywords{Functions of bounded variation, functionals with linear growth, singular minimizer.}

\date\today

\maketitle
\begin{abstract} 
We establish that the ellipticity of the minimal surface integrand is the sharp threshold for $\sobo^{1,1}$-regularity of solutions to the Dirichlet problem on $\bv$. Here, we particularly deal with the autonomous case of convex linear growth functionals. 
\end{abstract}
\tableofcontents

\section{Introduction}\label{sec:introduction}
Functionals with linear growth are a central topic in the Calculus of Variations, largely so because they provide a far-reaching generalization and more systematic understanding of the classical (non-parametric) \emph{area} or \emph{minimal surface} integral 
\begin{align}\label{eq:minsurf}
\mathscr{A}[u;\Omega]\coloneqq \int_{\Omega}\sqrt{1+|\nabla u|^{2}}\,\,\dif x,\qquad u\in\sobo^{1,1}(\Omega),
\end{align}
where $\Omega\subset\R^{n}$ is open and bounded with Lipschitz boundary. More precisely, we say that a convex integrand $F\colon\R^{N\times n}\to\R$ is \emph{of linear growth} if there exist $c_{1},c_{2},c_{3}>0$ such that 
\begin{align}\label{eq:lingrowth}
c_{1}|z|-c_{2}\leq F(z)\leq c_{3}(1+|z|)\qquad\text{for all}\;z\in\R^{N\times n}.   
\end{align}
Once a Dirichlet datum $u_{0}\in\sobo^{1,1}(\Omega;\R^{N})$ is prescribed, the associated variational principle
\begin{align}\label{eq:Dirichlet}
\text{to minimize}\;\;\;\mathscr{F}[u;\Omega] \coloneqq \int_{\Omega}F(\nabla u)\,\,\dif x\;\;\text{over}\;\sobo_{u_{0}}^{1,1}(\Omega;\R^{N})\coloneqq u_{0}+\sobo_{0}^{1,1}(\Omega;\R^{N})
\end{align}
is easily seen to produce bounded minimizing sequences $(u_{j})$ in $\sobo^{1,1}(\Omega;\R^{N})$. However, by potential concentration effects, such sequences might fail to have weakly precompact gradients in $\lebe^{1}(\Omega;\R^{N\times n})$. This, in turn, is seen easiest for the modulus function $F=|\cdot|$ in one dimension. Such concentration effects motivate the passage to the space $\bv(\Omega;\R^{N})$ of \emph{functions of bounded variation}. By definition, a map $u\in\lebe^{1}(\Omega;\R^{N})$ belongs to $\bv(\Omega;\R^{N})$ if and only if its distributional gradient is an $\R^{N\times n}$-valued finite Radon measure, in formulas $\D u\in\mathrm{RM}_{\mathrm{fin}}(\Omega;\R^{N\times n})$. Since $\mathscr{F}[-;\Omega]$ is a priori defined for $\sobo^{1,1}$-maps, the direct method consequently requires an extension (or relaxation) of $\mathscr{F}[-;\Omega]$ to $\bv(\Omega;\R^{N})$ by lower semicontinuity. Following the work of Goffman \& Serrin \cite{GOFSER64} and Reshetnyak \cite{RESHETNYAK68} (see also Bildhauer \cite{BILDHAUER03} and Giusti \cite{GIUSTI77}), the requisite lower semicontinuous hull has the integral representation 
\begin{align}\label{eq:integral}
\begin{split}
\overline{\mathscr{F}}_{u_{0}}^{*}[u;\Omega]  & = \int_{\Omega}F(\nabla u)\,\,\dif x  + \int_{\Omega}F^{\infty}\Big(\,\frac{\dif\D^{s}u}{\dif|\D^{s}u|}\Big)\,\,\dif|\D^{s}u| \\ & + \int_{\partial\Omega}F^{\infty}(\mathrm{tr}_{\partial\Omega}(u_{0}-u)\otimes\nu_{\partial\Omega})\,\,\dif\mathscr{H}^{n-1},\qquad u\in\bv(\Omega;\R^{N}). 
\end{split}
\end{align}
In \eqref{eq:integral}, we employed the usual Lebesgue-Radon-Nikod\'{y}m decomposition 
\begin{align}\label{eq:LRN}
\D u = \D^{a} u+\D^{s}u = \nabla u\,\mathscr{L}^{n}+\frac{\dif\D^{s}u}{\dif|\D^{s}u|}|\D^{s}u|
\end{align}
of $\D u\in\mathrm{RM}_{\mathrm{fin}}(\Omega;\R^{N\times n})$ into its absolutely continuous part $\D^{a}u$ and singular part $\D^{s} u$ with respect to $\mathscr{L}^{n}$. Moreover, by convexity and \eqref{eq:lingrowth}, the \emph{recession function} $F^{\infty}(z)\coloneqq \lim_{t\searrow 0}tF(\frac{z}{t})$ is well-defined. It is well known that $\overline{\mathscr{F}}_{u_{0}}^{*}[-;\Omega]$ is in fact the right extension, meaning that 
\begin{align}\label{eq:nogap}
\inf_{\sobo_{u_{0}}^{1,1}(\Omega;\R^{N})}\mathscr{F}[-;\Omega] = \min_{\bv(\Omega;\R^{N})}\overline{\mathscr{F}}_{u_{0}}^{*}[-;\Omega], 
\end{align}
and it is implicit in the no-gap-identity \eqref{eq:nogap} that minimizers of the relaxed functional exist indeed. In what follows, any such minimizer shall be called a $\bv$-\emph{minimizer} of $\mathscr{F}[-;\Omega]$ from \eqref{eq:Dirichlet}. For more detail on $\bv$-functions and relaxations, we refer the reader to \S \ref{sec:BV} below. 

Returning to the area integral \eqref{eq:minsurf}, it has long been known that $\bv$-minimizers are of class $\sobo^{1,1}$, being equivalent to the absence of the singular part $\D^{s}u$ in \eqref{eq:LRN}. The latter is a regularity feature specific to linear growth functionals, and does not emerge in the study of superlinear growth problems. For the area integral, this can even be strengthened to real analyticity (see, e.g., Giusti \cite{GIUSTI77}), and particularly makes the appearance of the recession integral over $\Omega$ in \eqref{eq:integral} a posteriori superfluous. On the other hand, the positively $1$-homogeneous integrand $F=|\cdot|$ does not necessarily produce $\sobo^{1,1}$-regular $\bv$-minimizers. 

It is thus particularly important to exhibit sharp criteria on the integrands $F$ which lead to the genuine $\sobo^{1,1}$-regularity of $\bv$-minimizers, a question that we answer in this paper. To put this task into its natural context, we firstly collect the requisite underlying terminology and revisit  the presently available results. 

\subsection{Non-uniform and $\mu$-ellipticity}\label{sec:nonuni} Inspired by Bernstein's genre \cite{Bernstein1912} and the conditions considered by Ladyzhenskaya and Ural'tseva \cite{LadyzhenskayaUralceva1968,LadyzhenskayaUralceva1970}, the natural notion of ellipticity for integrands with linear growth is that of $\mu$-\emph{ellipticity}. More precisely, given $\mu>1$, a $\hold^{2}$-integrand $F\colon\R^{N\times n}\to\R$ is said to $\mu$-elliptic if there exist constants $0<\lambda\leq \Lambda<\infty$ such that 
\begin{align}\label{eq:muell}
\lambda(1+|z|)^{-\mu}|\xi|^{2} \leq \langle \nabla ^{2}F(z)\xi,\xi\rangle \leq \Lambda (1+|z|)^{-1}|\xi|^{2}\qquad\text{for all}\;z,\xi\in\R^{N\times n}. 
\end{align}
This notion is due to Bildhauer and Fuchs \cite{BILDHAUERFUCHS2002MU,BILDHAUER03} in the linear growth situation (see also Fuchs and Mingione \cite{FuchsMingione2000}) and gives a precise bound on the corresponding ellipticity ratio 
\begin{align*}
\mathcal{R}_{F}(R)\coloneqq \sup_{|z|\leq R} \frac{\text{highest eigenvalue of $\nabla^{2}F(z)$}}{\text{lowest eigenvalue of $\nabla^{2}F(z)$}} \approx (1+R)^{\mu-1}.
\end{align*}
Since $\mathcal{R}_{F}(R)\to\infty$ as $R\to\infty$, integrands with \eqref{eq:muell} are also called \emph{non-uniformly elliptic}, e.g., in the terminology of De Filippis and Mingione \cite{DEFILIPPIS23,DEFMIN25,DEFMIN25b}. As an important special case, we note that the area integrand $\sqrt{1+|z|^{2}}$ is $3$-elliptic and not $\mu$-elliptic for any $1<\mu<3$. The requirement $\mu>1$ is necessary, since $1$-elliptic integrands do not exist; see the discussion in Beck and Schmidt \cite{BECSCH15}. From \eqref{eq:muell} and $\mu>1$, it is clear that integrands of linear growth share a striking resemblance with those of $(p,q)$-growth on the level of second derivatives. Here, for $1<p\leq q<\infty$, one typically has bounds 
\begin{align}\label{eq:pqsecond}
\lambda(1+|z|)^{p-2}|\xi|^{2}\leq \langle \nabla^{2}F(z)\xi,\xi\rangle \leq \Lambda (1+|z|)^{q-2}|\xi|^{2}\qquad\text{for all}\;z,\xi\in\R^{N\times n}.
\end{align}
Initiated in the works of Marcellini \cite{MARCELLINI86,MARCELLINI89,MARCELLINI91}, the systematic study of the non-uniformly elliptic behaviour has attracted large attention over the past decades; see \cite{BECMIN20,BILFUC01,BILFUC03,BOUSQUETLLEDOS25,ESPLEOMIN99,ESPLEOMIN04,CarozzaKristensenPasarelli,BELSCH20,BELSCH24,DEFKOCKRI24,MARPAP06,MINGIONE06Dark,SCHAEFFNER21} for a selection of contributions. It is by now well known that improved Sobolev regularity of minimizers requires a balance between $p$ and $q$. In general, for $1<p\leq q<\infty$ such a balance is dimensional and reads as 
\begin{align}\label{eq:pqbound1}
\frac{q}{p}<1+\frac{2}{n-1}, 
\end{align}
which can be improved to $q\leq p+2$ if minimizers are a priori known to be locally bounded, see \cite{CarozzaKristensenPasarelli,DEFKOCKRI24,SCHAEFFNER21}. The range displayed in \eqref{eq:pqbound1} extends the range 
\begin{align}\label{eq:pqbound2}
\frac{q}{p}<1+\frac{2}{n}
\end{align}
as appearing, e.g., in \cite{ESPLEOMIN99}. For the following, we note that all of such exponent restrictions come with a quantitative gain of gradient integrability, that is, $\sobo_{\locc}^{1,r}$-regularity for some  $r=r(p,q,n)>p$ as a consequence of degenerate weighted second order estimates. This also applies to functionals of almost linear growth, for which we refer the reader to De Filippis et al. \cite{DeFilippisDeFilippisPiccinini2026} and the references therein.

To motivate the link between \eqref{eq:muell} in the linear growth case and \eqref{eq:pqsecond} for $1<p\leq q<\infty$, one identifies $\mu\triangleq q-2$ and $p-2\triangleq -1$. Rewriting \eqref{eq:pqbound2} or $q\leq p+2$ as a bound on the difference of $(q-2)$ and $(p-2)$ (see also \cite[\S 1.3]{BECEITGME}), one arrives at 
\begin{align}\label{eq:muconditions}
\mu<1+\frac{2}{n}\;\;\;(\text{unconditional case})\;\;\;\text{and}\;\;\;\mu\leq 3\;\;\;(\text{$\lebe_{\locc}^{\infty}$-constrained case}).
\end{align}
Following the foundational work of Bildhauer \cite{BILDHAUER2002MUBOUNDED,BILDHAUERFUCHS2002MU,BILDHAUER03} and later by Beck and Schmidt \cite{BECSCH13}, the conditions \eqref{eq:muconditions} are by now known to yield $\sobo^{1,1}$-regularity of $\bv$-minimizers. Moreover, as discussed in \cite[Remark 5.14]{BeckGmeinederSchaeffner2026}, the first condition on $\mu$ from \eqref{eq:muconditions} is presently not known to be improvable to $\frac{2}{n-1}$ as in \eqref{eq:pqbound1}. This identifies $\mu=3$ as a crucial borderline case, in turn being precisely the non-uniform ellipticity of the area integrand. Here, one still obtains the regularity assertion $u\in\sobo^{1,1}(\Omega;\R^{N})$ and $\nabla u\in\lebe\log^{2}\lebe_{\locc}(\Omega;\R^{N\times n})$ for locally bounded $\bv$-minimizers $u$. This leads to the central conjecture that, when going beyond $\mu=3$, higher gradient integrability might persist on a more fine-tuned scale, e.g., $\nabla u\in\lebe\log^{\alpha(\mu)}\lebe_{\locc}$ with $\alpha(\mu)\to 0$ as $\mu\to\infty$. While such findings would still yield a quantitative logarithmic gain in gradient integrability, a substantially weaker yet more approachable objective is the purely qualitative $\sobo^{1,1}$-regularity of $\bv$-minimizers for $\mu>3$. Such results have been obtained for Neumann problems with radially symmetric integrands and $\sobo^{2,\infty}$-regular forcing in \cite{BECBULGME20}, but do \emph{not} apply to the Dirichlet problem on $\bv$ as considered here. Deferring a more precise discussion of this matter to \S \ref{sec:approaches} below, all of the available techniques break down beyond the threshold $\mu=3$ for \eqref{eq:Dirichlet}. 

As the main result of the present paper, even a purely qualitative $\sobo^{1,1}$-regularity is impossible for $\mu>3$. More precisely, we have: 
\begin{theorem}[Main Theorem]\label{thm:main}
Let $\mu>3$ and let $n= 2$, $N= 1$. Then there exists a $\mu$-elliptic integrand $F\in\hold^{2}(\R^{2})$, an open and bounded Lipschitz domain $\Omega\subset\R^{2}$ and a Dirichlet datum $u_{0}\in\sobo^{1,1}(\Omega)$ such that there exists a $\bv$-minimizer of $\mathscr{F}[-;\Omega]$ from \eqref{eq:Dirichlet} with 
\begin{align}
u\in(\bv(\Omega)\cap\lebe^{\infty}(\Omega))\setminus\sobo^{1,1}(\Omega).
\end{align}
We express the membership $u\in\bv(\Omega)\setminus\sobo^{1,1}(\Omega)$ by saying that $u$ is a \emph{singular $\bv$-minimizer} (of $\mathscr{F}[-;\Omega]$ from \eqref{eq:Dirichlet}). 
\end{theorem}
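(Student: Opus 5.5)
The plan is to build an explicit counterexample in which the minimizer jumps across a curve, and to certify its minimality with a dual (calibration) argument. Throughout I take $n=2$, $N=1$ and a radial integrand $F(z)=f(|z|)$. The profile $f$ is convex, of linear growth, and satisfies
\[
f''(t)\approx (1+t)^{-\mu}, \qquad f'(t)\nearrow 1 \text{ as } t\to\infty,
\]
so that
\[
1-f'(t)\approx t^{1-\mu} \qquad\text{and}\qquad F^{\infty}=|\cdot|.
\]
The key point is that for $\mu>3$ the defect $1-f'(t)$ is integrable against the relevant weights fast enough. This should make the Euler--Lagrange equation $\operatorname{div}(f'(|\nabla u|)\nabla u/|\nabla u|)=0$ admit solutions whose gradient blows up at a curve, with $f'(|\nabla u|)\to 1$ there, and across which $u$ jumps. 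This mirrors the classical Finn/Santi-type phenomenon: for the area integrand ($\mu=3$) boundary discontinuities occur but interior ones do not, and raising $\mu$ beyond $3$ should make interior jumps possible.

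First, I would construct a $1$D or radial profile. I look for $u(x)=v(x_1)$ on a strip, or alternatively $u=v(|x|)$ on an annulus, with a jump across $\{x_1=0\}$ (respectively a circle). In the one-dimensional reduction, the flux $\sigma=f'(|v'|)\operatorname{sgn}v'$ is constant. That gives no blowup, so I pass to the radial or annular setting, where the flux equation reads $r\,f'(|v'(r)|)=c$. Choosing $c=r_0$, we get $f'(|v'(r)|)=r_0/r<1$ for $r>r_0$. Then $|v'(r)|\approx (r-r_0)^{-1/(\mu-1)}$ near $r_0$, which is integrable, so $v$ stays bounded. On the inside, $r<r_0$, the flux must be carried by a jump part of maximal flux $1$.

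Concretely, I would take $\Omega=B_{R}$ and let $u$ be constant ($=0$) on $B_{r_0}$. On the annulus $r_0<r<R$, $u$ is the increasing radial solution above with the level $u(r_0^+)=a>0$ prescribed, so $u$ jumps by $a$ across $\partial B_{r_0}$. The datum is $u_0=u|_{\partial B_R}$, which is smooth. The $\mu>3$ condition would enter by matching the jump set and flux balance: the calibration field must have divergence zero and modulus $\le 1$, and the jump $\D^s u$ requires $|\sigma|=1$ on $\partial B_{r_0}$. On the disc $B_{r_0}$, $\sigma=x/r_0$ has divergence $2/r_0\neq0$, so this naive picture fails. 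I would therefore switch to a configuration where the divergence is balanced, replacing the disc by a strip or wedge geometry, or prescribing an interior region where the field is a constant unit vector. The divergence-free unit field $\sigma=e_1$ on a half-plane, glued to $\sigma=f'(|\nabla u|)\nabla u/|\nabla u|$ on the other side, is the cleanest choice. The remaining work is solving the two-dimensional PDE with boundary behaviour $f'\to1$ along a line, and here $\mu>3$ should be exactly the integrability condition making $u$ bounded yet discontinuous.

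Second, I would verify minimality via duality. Let $\sigma\in \lebe^\infty(\Omega;\R^2)$ with $\operatorname{div}\sigma=0$ and $\sigma(x)\in\partial F(\nabla u(x))$ a.e. Suppose also $|\sigma|\le1=$ the polar bound of $F^\infty$, with $\sigma\cdot \nu=1$ on the jump set (the normal trace is compatible with $\D^s u$). Then convexity gives
\[
\overline{\mathscr F}^*_{u_0}[w]\ge \overline{\mathscr F}^*_{u_0}[u]
\]
for all $w\in\bv$. This comes from the standard inequality
\[
F(\nabla w)\ge F(\nabla u)+\sigma\cdot(\nabla w-\nabla u),
\]
together with $F^\infty(p)\ge\sigma\cdot p$ and integration by parts using $\operatorname{div}\sigma=0$, including the boundary term. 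This is the pairing of Anzellotti type. The statement $u\in\lebe^\infty\setminus\sobo^{1,1}$ then follows from the nonzero jump. Finally, I would check that $F\in\hold^2$ is $\mu$-elliptic in the sense of \eqref{eq:muell}: the radial eigenvalue is $f''$, and the tangential eigenvalue is $f'(t)/t\approx(1+t)^{-1}$. Near $t=0$ I would modify $f$ to be uniformly convex, which is harmless.

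I expect the main obstacle to be the construction of the explicit solution with a calibrating field that is exactly a unit field on one side of the jump and stays strictly inside the unit ball on the other. Specifically, the obstacle is making the divergence balance work in genuinely two dimensions, and showing that the blow-up rate of $|\nabla u|$ is integrable precisely when $\mu>3$. To get there, I would first try a separable ansatz $u(x)=x_2\,g(x_1)$ or a homogeneous ansatz in polar coordinates, reducing the problem to an ODE whose solvability threshold is $\mu>3$.
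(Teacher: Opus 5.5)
There is a genuine gap. The whole content of the theorem is an \emph{exact} solution $v$ of the Euler--Lagrange equation whose gradient blows up integrably on \emph{both} sides of an interior curve $\Gamma$. Its stress $\sigma=\nabla F(\nabla v)$ must also be continuous and solenoidal, and must attain on $\Gamma$ the point of $\partial\,\mathrm{dom}(F^{*})$ that calibrates the jump. You never produce such a $v$, and the candidates you name fail:
\begin{itemize}
\item The annulus fails, as you noted yourself.
\item The half-plane gluing with $\sigma\equiv e_{1}$ fails. Both sides of an interior jump lie in $\Omega$, and the calibration needs $\sigma\in\partial F(\nabla u)$ a.e.\ there. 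For radial $F$ one has $|\nabla F(z)|=f'(|z|)<1$ for every $z$, so $|\sigma|=1$ on a set of positive measure is excluded. If you put that half-plane outside $\Omega$, the jump sits on $\partial\Omega$ and $u\in\sobo^{1,1}(\Omega)$.
\item The ansatz $u=x_{2}g(x_{1})$ has bounded gradient near $\{x_{2}=0\}$, so no blow-up occurs at all.
\end{itemize}
Your account of where $\mu>3$ enters is also off. The annulus rate $(r-r_{0})^{-1/(\mu-1)}$ is integrable for every $\mu>2$. It reflects a \emph{linear} approach of $\sigma$ to $\partial\,\mathrm{dom}(F^{*})$, which is possible only at a one-sided (boundary) jump. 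At an interior jump line $\{y=0\}$ with $\sigma\to(0,L)$, integrate $\mathrm{div}\,\sigma=0$ over thin rectangles $[x_{0},x_{1}]\times[0,y]$. This forces $\int_{x_{0}}^{x_{1}}(L-\sigma_{2}(x,y))\,\mathrm{d}x=o(y)$, so the stress must approach the dual boundary superlinearly. In the paper, $\sigma_{1}\sim y$ while the dual distance is $\rho(\sigma)=(\mu-2)A(\partial_{y}v)^{1-\mu}\sim y^{2}$. Hence $|\nabla v|\sim|y|^{-2/(\mu-1)}$, which is integrable if and only if $\mu>3$.

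The missing idea is to abandon radial symmetry. The paper notes that for $F=f(|\cdot|)$ the rigidity $\sigma\parallel\nabla u$ seems to rule out separation-of-variables constructions. It also says its construction does not work there, and that $\sobo^{1,1}$-regularity may even hold in that case (Remark~\ref{rem:DF}). With your $f$, one checks that the paper's product ansatz below makes $\mathrm{div}\,\sigma$ vanish only to leading order in $y$, not exactly. An exact solution would then need a new existence argument for a singular degenerate problem, which you do not supply.

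The paper instead \emph{designs} $F$ so that an explicit ansatz is exact:
\begin{itemize}
\item It sets $F=G_{0}^{*}$, with $G_{0}=-C(A,\mu)\rho^{\alpha}$ on a bounded convex set $K$ that agrees with $\mathrm{dom}(F_{0}^{*})$ near the vertex $(0,L)$.
\item Global $\mu$-ellipticity follows from $\nabla^{2}F=((\nabla^{2}G_{0})\circ\nabla F)^{-1}$.
\item On cones $\mathcal{C}_{R,\delta}$ around $e_{2}$ one has $F=F_{0}$ with $F_{0}(z)=Lz_{2}+Az_{2}^{2-\mu}+bz_{1}^{2}/z_{2}$.
\item The function $v=a(x)\,\mathrm{sgn}(y)|y|^{r}$, with $r=\frac{\mu-3}{\mu-1}$, $a=w^{-r}$ and $w''+\kappa w^{\mu-2}=0$, has $\nabla v\in\mathcal{C}_{R,\delta}$. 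Its stress is exactly solenoidal, continuous, and equal to $(0,L)$ on $\{y=0\}$.
\end{itemize}
Adding the jump $J\mathbbm{1}_{\{y>0\}}$ then gives the singular minimizer.

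Your calibration step is correct and is how the paper concludes in Theorem~\ref{thm:main2a}. It uses Fenchel's inequality, $F^{\infty}(\xi)\geq\sigma\cdot\xi$ for $\sigma\in\mathrm{dom}(F^{*})$, integration by parts against a continuous solenoidal $\sigma$, and a $\sobo^{1,1}$ datum with the trace of $u$. But it is the routine half of the argument.
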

Theorem \ref{thm:main} identifies $\mu=3$, that is, the ellipticity of the area integrand, as the decisive threshold for $\sobo^{1,1}$-regularity. As discussed in detail below in \S\ref{sec:outline}, the key point in Theorem \ref{thm:main} is that the integrands are in fact autonomous, meaning that they do not share an additional $x$-dependence. In fact, in such scenarios, $\bv$-minimizers $u\in\bv(\Omega;\R^{N})\setminus\sobo^{1,1}(\Omega;\R^{N})$ are easier to obtain. First examples are due to Giaquinta, Modica and Sou\v{c}ek \cite{GIAMODSOU79} and Bildhauer \cite[Chapter 4.4]{BILDHAUER03}. In the latter contribution, an integrand $F\in\hold^{2}(\overline{\Omega}\times\R^{N\times n})$ is constructed which is $\mu$-elliptic uniformly in the first variable, but allows for a singular $\bv$-minimizer. Here, the key idea is that, despite the benign $x$-dependence, this additional flexibility does \emph{not} make it energetically unfavourable for a $\bv$-minimizer to exhibit jumps; a   discussion thereof can also be found in the recent work of De Filippis \cite[\S 1.1]{DeFilippis2026}. A similar idea appears in the work of Fu\ss angel, Priyasad and Stephan \cite{FUSPRISTE25}, in turn being based on Kaiser \cite{Kaiser1975}. In the situation considered here, however, the respective integrands are autonomous, and so analogous energetic considerations are difficult to implement. Let us note that Theorem \ref{thm:main} can be lifted to higher dimensions $n\geq 2$ and $N\geq 2$ by relatively routine means; here, we focus on the base case displayed in Theorem \ref{thm:main} and shall give the details in the higher dimensional case elsewhere. 

From the perspective of Sobolev regularity, Theorem \ref{thm:main} asserts that $\bv$-minimizers are essentially not more regular than generic $\bv$-maps. Our particular construction, however, hinges on a jump across a hyperplane. Thus, it does not rule out the absence of Cantor parts for $\mu>3$. The author has tried to obtain such a $\mathrm{SBV}$-regularity result even for $\mu>3$, but has not been successful in doing so. 

Yet, in light of Theorem \ref{thm:main}, partial $\hold^{1,\alpha}$-regularity of $\bv$-minimizers is close to optimal. Here, we recall that $u\in\bv(\Omega;\R^{N})$ is \emph{partially $\hold^{1,\alpha}$-regular} if there exists an open set $\Omega_{u}\subset\Omega$ with $\mathscr{L}^{n}(\Omega\setminus\Omega_{u})=0$ such that $u$ is of class $\hold^{1,\alpha}$ for any $0<\alpha<1$ in a neighbourhood of any $x_{0}\in\Omega_{u}$. In this situation, $\Sigma_{u}\coloneqq\Omega\setminus\Omega_{u}$ is called the \emph{singular set}.

Such results have been established first in the convex linear growth case by Anzellotti and Giaquinta \cite{ANZGIA88}, and have been lifted to the quasiconvex linear growth case by Kristensen and the author  \cite{GMEKRI19b}; see also \cite{GMEINEDER16,GMEINEDER20,GMEINEDER21,GMEKRI24,Li2022,Schmidt2014}. It is important to note that, when working in the realm of $\mu$-elliptic integrands, these partial $\hold^{1,\alpha}$-regularity assertions are independent of the specific choice of $\mu$. Hence, Theorem \ref{thm:main} and the discussion afterwards imply the following lower bound for the Hausdorff dimensions of the underlying singular sets $\Sigma_{u}$: 
\begin{corollary}\label{cor:singular}
In the situation of Theorem \ref{thm:main}, \emph{every $\bv$-minimizer is partially $\hold^{1,\alpha}$-regular}, and the Hausdorff dimension bound from below  
\begin{align}\label{eq:Hausdorfflower}
\dim_{\mathscr{H}}(\Sigma_{u})\geq 1 
\end{align}
holds \emph{for one $\bv$-minimizer $u\in\bv(\Omega)$}. 
\end{corollary}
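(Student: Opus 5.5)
The corollary has two parts, and I would treat them separately.

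\emph{Partial regularity.} For the first assertion I would invoke the partial regularity theorem of Anzellotti and Giaquinta \cite{ANZGIA88} (convex case), or alternatively \cite{GMEKRI19b}. These results require only that $F\in\hold^{2}$ is convex, of linear growth \eqref{eq:lingrowth}, and has $\nabla^{2}F(z)$ positive definite at every $z$. The last property follows from the lower bound in \eqref{eq:muell}, since $\lambda(1+|z|)^{-\mu}>0$ pointwise for every $\mu$. To get \eqref{eq:lingrowth} from \eqref{eq:muell}, I would integrate the Hessian bounds twice along rays. The upper bound $\Lambda(1+|z|)^{-1}$ gives $|\nabla F(z)|\lesssim 1+\log(1+|z|)$, and hence at most almost-linear growth. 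So I would also check that the integrand produced in Theorem \ref{thm:main} is in fact of linear growth; I take this from its construction. The outcome is that every $\bv$-minimizer $u$ has an open set $\Omega_{u}$ of full measure on which $u$ is $\hold^{1,\alpha}$ for all $0<\alpha<1$. The regularity set is defined independently of $\alpha$, so the singular set $\Sigma_{u}$ is well defined.

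\emph{Lower bound \eqref{eq:Hausdorfflower}.} For the second assertion I would take the singular minimizer $u$ from Theorem \ref{thm:main}, so $\D^{s}u\neq0$. On $\Omega_{u}$ the map $u$ is $\hold^{1}$, so $\D u\mres\Omega_{u}=\nabla u\,\mathscr{L}^{n}$. Hence $|\D^{s}u|$ is concentrated on $\Sigma_{u}$ and $|\D^{s}u|(\Sigma_{u})>0$. I would then use the standard fact that $|\D u|$ vanishes on Borel sets of $\sigma$-finite $\mathscr{H}^{n-1}$-measure zero; more precisely, $|\D u|(B)=0$ whenever $\mathscr{H}^{n-1}(B)=0$ (Ambrosio--Fusco--Pallara, Lemma 3.76). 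If $\dim_{\mathscr{H}}(\Sigma_{u})<1=n-1$, then $\mathscr{H}^{1}(\Sigma_{u})=0$, so $|\D^{s}u|(\Sigma_{u})=0$, a contradiction. Therefore $\dim_{\mathscr{H}}(\Sigma_{u})\geq1$.

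There is also a more concrete route. The construction in Theorem \ref{thm:main} produces a jump across a hyperplane segment $S$ with $\mathscr{H}^{1}(S)>0$. Near $S$ the map $u$ is not even continuous, so $S\subset\Sigma_{u}$, which gives the bound directly.

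The main obstacle I expect is the first step: confirming that the $\mu$-elliptic integrand produced by the construction satisfies exactly the hypotheses (linear growth, $\hold^{2}$, strict convexity) of the cited partial regularity theorems. The measure-theoretic dimension argument is routine.
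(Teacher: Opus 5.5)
Your proposal is correct and matches the paper, which treats the corollary as an immediate consequence of Theorem \ref{thm:main} combined with the partial regularity results of Anzellotti--Giaquinta and Gmeineder--Kristensen; these apply because the constructed $F$ is $\hold^{2}$, of linear growth by Proposition \ref{prop:Fmuell}, and has everywhere positive definite Hessian. Either of your two arguments for \eqref{eq:Hausdorfflower} supplies the detail the paper leaves implicit: the inclusion of the jump segment $\Omega\cap\{y=0\}$ in $\Sigma_{u}$, or $|\D^{s}u|(\Sigma_{u})>0$ together with $|\D u|(B)=0$ for every Borel set $B$ with $\mathscr{H}^{1}(B)=0$.
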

This result can be lifted to the higher dimensional case too, in which case \eqref{eq:Hausdorfflower} reads $\dim_{\mathscr{H}}(\Sigma_{u})\geq n-1$. 
Let us note that the preceding corollary only makes an assertion on one singular  $\bv$-minimizer. Indeed, despite the strict convexity of $\mu$-elliptic integrands, $\bv$-minimizers generally fail to be unique. This has been observed by Santi  \cite{SANTI72} and Finn \cite{Finn1965} for minimal surface equations, and has been studied in detail in the general vectorial case by Beck and Schmidt \cite{BECSCH13}.
The chief reason for this phenomenon is the appearance of the positively $1$-homogeneous recession function $F^{\infty}$ in \eqref{eq:integral}. Since $F^{\infty}$ acts on the singular parts exclusively, this leads to the relaxed functional $\overline{\mathscr{F}}_{u_{0}}^{*}[-;\Omega]$ being non-strictly convex. This in itself is a major obstruction to obtain universal regularity estimates (that is, for all $\bv$-minimizers) if $\mu\leq 3$, see \cite{BECSCH13,GMEKRI19,GMEINEDER20,BeckGmeinederSchaeffner2026}. Now, \emph{if} $\bv$-minimizers are known to be $\sobo^{1,1}$-regular, the boundary penalization integral in \eqref{eq:integral} is the sole source of potential non-uniqueness. More precisely, if $\Omega\subset\R^{n}$ is moreover connected, then the set of all $\bv$-minimizers is contained in the affine space $v+\R a$ for some $a\in\R^{N}$ and one $\bv$-minimizer $v\in\sobo^{1,1}(\Omega;\R^{N})$. This result is essentially sharp if no curvature conditions on the boundary are imposed. Yet, it is precisely Theorem \ref{thm:main} which rules out the crucial $\sobo^{1,1}$-regularity for $\mu>3$ as required for the aforementioned conclusions, and so the lower bound \eqref{eq:Hausdorfflower} cannot be extended to all $\bv$-minimizers by trivial means. 
\begin{remark}\label{rem:DF}
As addressed in detail in \S\ref{sec:outline} below, we  note that the integrand $F$ underlying Theorem \ref{thm:main} is not radially symmetric. For such integrands, De Filippis \cite{DeFilippis2026} has recently obtained sharp criteria on $\mu$ for the underlying variational integrals to produce Non-Lipschitzian but H\"{o}lder regular $\sobo^{1,1}$-minimizers, thereby answering a question dating back to Ladyzhenskaya and Ural'tseva; see also Beck et al. \cite{BECBULMAR18} and the references therein  for positive results in the linear growth context. We point out that the construction in the proof of Theorem \ref{thm:main} does not work for radially symmetric integrands, and it is conceivable that radial symmetry leads to $\sobo^{1,1}$-regularity of $\bv$-minimizers indeed. 
\end{remark}
\subsection{Organization of the paper} In \S \ref{sec:prelims}, we fix notation and gather background results on $\bv$-functions, relaxations and convex analysis. In \S \ref{sec:outline}, we revisit previous approaches to the construction of irregular minimizers and give an outline of our particular strategy. Following the latter, we construct in \S \ref{sec:integrands} the integrands underlying Theorem \ref{thm:main}. Finally, in \S\ref{sec:singularity}, we establish Theorem \ref{thm:main}, from where Corollary \ref{cor:singular} follows immediately.  
\subsection*{Acknowledgments} The author  gratefully acknowledges funding through the Hector Foundation II (Project Number FP 626/21), and is moreover grateful to the Oxford Centre for Nonlinear Partial Differential Equations (OxPDE) for the funding of a visit in March 2026, during which this project was commenced. Lastly, he is also thankful for the invitation to the Oberwolfach workshop 'Calculus of Variations' in August/September 2026, during which the project was finished.

\section{Preliminaries}\label{sec:prelims}
\subsection{General notation}\label{sec:notation}
Throughout, $\Omega\subset\R^{n}$ denotes an open and bounded set. We use $\langle\cdot,\cdot\rangle$ to denote the Euclidean inner product on $\R^{N}$ or $\R^{N\times n}$. To alleviate notation, we sometimes write '$\cdot$' instead, but no confusions will arise from this. We write $|\cdot|$ for the underlying norm; for $A\in\R^{N\times n}$, we write $\|A\|$ for its operator norm with respect to the Euclidean norm. For $x_{0}\in\R^{n}$ and $r>0$, we write 
\begin{align*}
\ball_{r}(x_{0})\coloneqq \{x\in\R^{n}\colon\;|x-x_{0}|<r\}, 
\end{align*}
and abbreviate $\mathbb{S}^{n-1}\coloneqq \partial\ball_{1}(0)$ for the unit sphere in $\R^{n}$.  Moreover, we write $\mathbbm{1}_{n\times n}$ for the unit matrix in $\R^{n\times n}$; for $a\in\R^{N}$ and $b\in\R^{n}$, we denote by $a\otimes b\coloneqq ab^{\top}$ the associated tensor product. For a symmetric matrix $A\in\R_{\mathrm{sym}}^{n\times n}$, we  use  $\lambda_{A}^{-}$ for its smallest, and $\lambda_{A}^{+}$ for its largest eigenvalue. In order to distinguish from  boundary trace operators, we write $\mathrm{Tr}(A)$ for the trace of a quadratic matrix and  $\mathrm{tr}_{\partial\Omega}$ for the corresponding boundary trace operators, see also \S \ref{sec:BV} below. 

As usual, we write $\mathscr{L}^{n}$ and $\mathscr{H}^{n-1}$ for the $n$-dimensional Lebesgue-- and the $(n-1)$-dimensional Hausdorff measure.  For a finite dimensional inner product space $V$, we denote by $\mathrm{RM}_{\mathrm{fin}}(\Omega;V)$ the $V$-valued, finite Radon measures; if $V=\R$, we simply write $\mathrm{RM}_{\mathrm{fin}}(\Omega)$. Lastly, for $\mu\in\mathrm{RM}_{\mathrm{fin}}(\Omega;V)$ and a Borel set $A\subset\Omega$, we write $\mu\mres A$ for its restriction to $A$. 
\subsection{Functions of bounded variation and relaxations}\label{sec:BV}
We now record some background facts on functions of bounded variation and relaxations, all of which is covered in more detail, e.g., in \cite{AMBFUSPAL00,EVANSGARIEPY}.
For an open set $\Omega\subset\R^{n}$, the space $\bv(\Omega;\R^{N})$ of functions of bounded variation is defined as the collection of all $u\in\lebe^{1}(\Omega;\R^{N})$ such that 
\begin{align}\label{eq:TV}
|\D u|(\Omega)\coloneqq \sup \Big\{\int_{\Omega}\langle u,\mathrm{div}(\varphi)\rangle\,\,\dif x \colon\;\varphi\in\hold_{c}^{\infty}(\Omega;\R^{N\times n}),\;\;\|\varphi\|_{\lebe^{\infty}(\Omega)}\leq 1 \Big\}<\infty, 
\end{align}
and we refer to the preceding quantity as the \emph{total variation} of $u$. In particular, the distributional gradient satisfies $\D u\in\mathrm{RM}_{\mathrm{fin}}(\Omega;\R^{N\times n})$, and \eqref{eq:TV} is its total variation. Below, we shall also refer to $\D u$ as the \emph{measure gradient} of $u$. The norm on $\bv(\Omega;\R^{N})$ is given by $\|u\|_{\bv(\Omega)}\coloneqq \|u\|_{\lebe^{1}(\Omega)}+|\D u|(\Omega)$, and we simply write $\bv(\Omega)=\bv(\Omega;\R)$ if $N=1$.  

Let $u\in\bv(\Omega;\R^{N})$. As alluded to in the introduction, the Lebesgue--Radon--Nikod\'{y}m decomposition of $\D u$ into its absolutely continuous and singular parts with respect to $\mathscr{L}^{n}$ reads  
\begin{align}\label{eq:LNR}
\D u =\D^{a}u + \D^{s}u = \nabla u\mathscr{L}^{n}\mres\Omega + \frac{\dif\D^{s}u}{\dif|\D^{s}u|}|\D^{s}u|,
\end{align}
where $\nabla u$ is the so-called \emph{approximate gradient}. Based on \eqref{eq:LNR}, we may further split 
\begin{align*}
\D^{s}u = \D^{j}u + \D^{c} u, 
\end{align*}
where $\D^{j}u\coloneqq \D^{s}u\mres \mathrm{J}_{u}$ is the \emph{jump part} of $\D u$ and $\D^{c}u\coloneqq \D^{s}u\mres\mathrm{L}_{u}$ is the \emph{Cantor part} of $\D u$. Here, the set $\mathrm{J}_{u}$ is defined as the collection of all $x_{0}\in\Omega$ for which there exist $\nu\in\mathbb{S}^{n-1}$ and $a^{+},a^{-}\in\R^{N}$ with $a^{+}\neq a^{-}$ such that 
\begin{align*}
\lim_{r\searrow 0}\dashint_{\ball_{r,\nu}^{+}(x_{0})}|u-a^{+}|\,\,\dif x = \lim_{r\searrow 0}\dashint_{\ball_{r,\nu}^{-}(x_{0})}|u-a^{-}|\,\,\dif x = 0,
\end{align*}
where $\ball_{r,\nu}^{\pm}(x_{0})=\{x\in\ball_{r}(x_{0})\colon\;\langle x-x_{0},\nu\rangle \gtrless 0\}$. Here, the dash indicates a mean value integral with respect to $\mathscr{L}^{n}$. Lastly, $\mathrm{L}_{u}$ is the set of all Lebesgue points of $u$.

In the following, let $\Omega$ be open and bounded with Lipschitz boundary. Since the canonical norm topology is too strong for most purposes, one introduces a substitute as follows: Given $u,u_{1},...\in\bv(\Omega;\R^{N})$, we say that $(u_{j})$ converges to $u$ in the \emph{weak*-sense on $\bv(\Omega;\R^{N})$} if and only if $u_{j}\to u$ strongly in $\lebe^{1}(\Omega;\R^{N})$ and $\D u_{j}\stackrel{*}{\rightharpoonup}\D u$ in the weak*-sense on $\mathrm{RM}_{\mathrm{fin}}(\Omega;\R^{N\times n})\cong\hold_{0}(\Omega;\R^{N\times n})'$. In particular, this yields compactness in the following sense: If $(u_{j})\subset\bv(\Omega;\R^{N})$ is such that $\sup_{j\in\mathbb{N}}\|u_{j}\|_{\bv(\Omega)}<\infty$, then there exists $u\in\bv(\Omega;\R^{N})$ and a subsequence $(u_{j_{i}})\subset (u_{j})$ such that $u_{j_{i}}\to u$ in the weak*-sense on $\bv(\Omega;\R^{N})$. 

For the following, we note that there exists a surjective linear (boundary) trace operator $\mathrm{tr}_{\partial\Omega}\colon\bv(\Omega;\R^{N})\to\lebe^{1}(\partial\Omega;\R^{N})$ such that $\mathrm{tr}_{\partial\Omega}(u)=u|_{\partial\Omega}$ holds $\mathscr{H}^{n-1}$-a.e. on $\partial\Omega$ for all  $u\in\bv(\Omega;\R^{N})\cap\hold(\overline{\Omega};\R^{N})$. This trace operator is continuous with respect to $\|\cdot\|_{\bv(\Omega)}$, but not so with respect to weak*-convergence on $\bv(\Omega;\R^{N})$. As briefly discussed below, this is the main reason for the boundary penalization integrals appearing in \eqref{eq:integral}. 

If $N=1$, the classical integration-by-parts formula on $\bv(\Omega)$ asserts that 
\begin{align}\label{eq:IBP0}
\int_{\Omega}\left\langle\varphi,\,\,\dif\D u\right\rangle + \int_{\Omega}\mathrm{div}(\varphi) u\,\,\dif x = \int_{\partial\Omega}\mathrm{tr}_{\partial\Omega}(u)\langle\varphi,\nu_{\partial\Omega}\rangle\,\,\dif\mathscr{H}^{n-1}
\end{align}
holds for all $\varphi\in\hold^{1}(\overline{\Omega};\R^{n})$ and all $u\in\bv(\Omega)$, where $\nu_{\partial\Omega}\colon\partial\Omega\to\mathbb{S}^{n-1}$ is the outer unit normal to $\partial\Omega$. As usual, the first term on the left-hand side of \eqref{eq:IBP0} is defined as 
\begin{align*}
\int_{\Omega}\langle\varphi,\,\dif\D u \rangle= \int_{\Omega}\left\langle\varphi,\frac{\dif\D u}{\dif|\D u|}\right\rangle\,\,\dif|\D u|. 
\end{align*}
Below, we shall require a version of \eqref{eq:IBP0} for merely continuous and  distributionally solenoidal test maps. This is certainly clear to the experts, and is recorded here for the sake of completeness:
\begin{lemma}\label{lem:IBP}
Let $\Omega,U\subset\R^{n}$ be open and bounded such that $\Omega\Subset U$. Moreover, let $\sigma\in\hold(U;\R^{n})$ be such that $\mathrm{div}(\sigma)=0$ in $\mathscr{D}'(U)$, and let $u\in\bv(\Omega)$. Then we have 
\begin{align}\label{eq:dividentity}
\int_{\Omega}\langle\sigma,\,\dif\D u\rangle = \int_{\partial\Omega}\mathrm{tr}_{\partial\Omega}(u)\langle \sigma,\nu_{\partial\Omega}\rangle\,\,\dif \mathscr{H}^{n-1}. 
\end{align}
\end{lemma}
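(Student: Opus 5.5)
The plan is to reduce \eqref{eq:dividentity} to the classical formula \eqref{eq:IBP0} by mollifying $\sigma$. Since mollification preserves the divergence constraint, the regularized fields satisfy \eqref{eq:IBP0} with vanishing divergence term, and it then remains to pass to the limit on both sides.

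First, fix an open set $V$ with $\Omega\Subset V\Subset U$, and let $\rho_\varepsilon$ be a standard mollifier. For $0<\varepsilon<\mathrm{dist}(V,\partial U)$ set $\sigma_\varepsilon\coloneqq\rho_\varepsilon*\sigma$, which is smooth on $V$. Because differentiation commutes with convolution in the distributional sense, we have $\mathrm{div}(\sigma_\varepsilon)=\rho_\varepsilon*\mathrm{div}(\sigma)=0$ pointwise on $V$.

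Next, apply \eqref{eq:IBP0} with $\varphi=\sigma_\varepsilon|_{\overline\Omega}\in\hold^1(\overline\Omega;\R^n)$. The term involving $\mathrm{div}(\sigma_\varepsilon)\,u$ vanishes, which gives
\[
\int_\Omega\langle\sigma_\varepsilon,\dif\D u\rangle=\int_{\partial\Omega}\mathrm{tr}_{\partial\Omega}(u)\langle\sigma_\varepsilon,\nu_{\partial\Omega}\rangle\,\dif\mathscr H^{n-1}.
\]

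Finally, let $\varepsilon\searrow0$. Since $\sigma$ is continuous on $U$, it is uniformly continuous on $\overline V$, so $\sigma_\varepsilon\to\sigma$ uniformly on $\overline\Omega$. On the left-hand side this yields
\[
\Big|\int_\Omega\langle\sigma_\varepsilon-\sigma,\dif\D u\rangle\Big|\le\|\sigma_\varepsilon-\sigma\|_{\lebe^\infty(\Omega)}\,|\D u|(\Omega)\to0 .
\]
On the right-hand side, $\mathrm{tr}_{\partial\Omega}(u)\in\lebe^1(\partial\Omega)$ and $\nu_{\partial\Omega}$ is bounded, so the boundary integrals converge by the same uniform estimate. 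This proves \eqref{eq:dividentity}.

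The only delicate point is the standing assumption, implicit in \eqref{eq:IBP0} and in the trace operator, that $\partial\Omega$ is Lipschitz, so that $\nu_{\partial\Omega}$ and $\mathrm{tr}_{\partial\Omega}$ are defined. I would state this assumption explicitly. Once it is in place the argument is routine, with the commutation of divergence and mollification being the key observation.
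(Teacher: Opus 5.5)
Your proposal is correct and follows essentially the paper's argument: mollify $\sigma$, note that $\rho_\varepsilon*\sigma$ is divergence-free near $\overline\Omega$, apply the Gauss--Green formula \eqref{eq:IBP0}, and pass to the limit using $\rho_\varepsilon*\sigma\to\sigma$ uniformly on $\overline\Omega$. Your explicit bounds via $|\D u|(\Omega)$ and $\|\mathrm{tr}_{\partial\Omega}(u)\|_{\lebe^{1}(\partial\Omega)}$ are a cleaner version of the paper's appeal to dominated convergence, and you are right that the Lipschitz regularity of $\partial\Omega$ should be stated explicitly, since the paper imposes it only as a standing assumption in \S\ref{sec:BV}.
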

\begin{proof}
By construction, $\sigma$ is distributionally solenoidal and continuous in a neighbourhood of $\Omega$. For the $\varepsilon$-rescaled variant $\rho_{\varepsilon}=\frac{1}{\varepsilon^{n}}\rho(\frac{\cdot}{\varepsilon})$ of a standard mollifier $\rho$, $\rho_{\varepsilon}*\sigma$ is well-defined on $\Omega$ for all sufficiently small $\varepsilon>0$. In consequence, we have by the Gau\ss --Green formula on $\bv$: 
\begin{align*}
\int_{\Omega}\langle\rho_{\varepsilon}*\sigma,\,\dif\D u\rangle & = \int_{\partial\Omega}\mathrm{tr}_{\partial\Omega}(u)\langle\rho_{\varepsilon}*\sigma,\nu_{\partial\Omega}\rangle\,\,\dif\mathscr{H}^{n-1} - \int_{\Omega}\underbrace{\mathrm{div}(\rho_{\varepsilon}*\sigma)}_{=0} u\,\,\,\dif x \\ & =  \int_{\partial\Omega}\mathrm{tr}_{\partial\Omega}(u)\langle \rho_{\varepsilon}*\sigma,\nu_{\partial\Omega}\rangle\,\,\dif\mathscr{H}^{n-1}.
\end{align*}
Sending $\varepsilon\searrow 0$, we have $\rho_{\varepsilon}*\sigma \to \sigma$ uniformly on $\overline{\Omega}$. Because of $\D u\in\mathrm{RM}_{\mathrm{fin}}(\Omega;\R^{n})$ and $\mathrm{tr}_{\partial\Omega}(u)\in\lebe^{1}(\partial\Omega)$, the validity of \eqref{eq:dividentity} follows from Lebesgue's theorem on dominated convergence. The proof is complete. 
\end{proof}
To keep the paper self-contained, we conclude this subsection with some more detailed comments on the relaxed functionals \eqref{eq:integral}. For this, let $u_{0}\in\sobo^{1,1}(\Omega;\R^{N})$ and let $F\in\hold(\R^{N\times n})$ be convex with \eqref{eq:lingrowth}. In this situation, the extended functional \eqref{eq:integral} coincides with the Lebesgue--Serrin--Marcellini relaxation (see, e.g., \cite{MARCELLINI86}) 
\begin{align*}
\overline{\mathscr{F}}_{u_{0}}^{*}[u;\Omega]= \inf\left\{ \liminf_{j\to\infty}\int_{\Omega}F(\nabla u_{j})\,\,\dif x \colon\; \begin{array}{c} (u_{j})\subset \sobo_{u_{0}}^{1,1}(\Omega;\R^{N}), \\ u_{j}\stackrel{*}{\rightharpoonup} u\;\text{in}\;\bv(\Omega;\R^{N})\end{array}\right\},\qquad u\in\bv(\Omega;\R^{N}).
\end{align*}
This is the lower semicontinuous hull with respect to weak*-convergence on $\bv(\Omega;\R^{N})$, and thus identifies \eqref{eq:integral} as the canonical extension of $\mathscr{F}[-;\Omega]$ from $\sobo_{u_{0}}^{1,1}(\Omega;\R^{N})$ to $\bv(\Omega;\R^{N})$ by lower semicontinuity. Hence, the direct method implies the existence of $\bv$-minimizers, and the no-gap-result \eqref{eq:nogap} in turn is a consequence of a refined trace-preserving smooth approximation result \cite[Chapter B.1]{BILDHAUER03}. We recall that the underlying \emph{recession function} $F^{\infty}\colon\R^{N\times n}\to\R$ is defined by 
 \begin{align}\label{eq:recessionfunction}
F^{\infty}(z)\coloneqq \lim_{t\searrow 0}tF\Big(\frac{z}{t}\Big),\qquad z\in\R^{N\times n},
 \end{align}
 and exists and is finite due to convexity and the linear growth of $F$. For our future purposes and as mentioned in \S\ref{sec:nonuni}, we explicitly record that 
 \begin{align}\label{eq:1homorecess}
F^{\infty}\;\;\;\text{is positively $1$-homogeneous}. 
 \end{align}
 In \eqref{eq:integral}, the boundary penalization term governed by the recession function is due to the fact that the trace operator $\mathrm{tr}_{\partial\Omega}\colon\bv(\Omega;\R^{N})\to\lebe^{1}(\partial\Omega;\R^{N})$ is not continuous with respect to weak*-convergence on $\bv(\Omega;\R^{N})$. More precisely, as shown in \cite{Finn1965,SANTI72}, see also \cite{BECSCH13}, $\bv$-minimizers indeed do not need to attain prescribed Dirichlet data; instead, the relaxed functional forces the deviation from the prescribed data to be minimal in the interplay with the bulk energy. For more detail, the reader is referred to \cite{AMBFUSPAL00,BILDHAUER03}.
\subsection{Convex analysis}\label{sec:convex}
In this subsection, we gather some background facts from convex analysis as required in the main part. These can be traced back to \cite{HiriartUrrutyLemarechal2001,Rockafellar1970}. First, a convex function $f\colon\R^{n}\to\R\cup\{+\infty\}$ is said to be \emph{proper} if $f\not\equiv +\infty$. In this situation, $\mathrm{dom}(f)$ is the set of all points $x\in\R^{n}$ such that $f(x)\neq +\infty$. For a proper convex function $f$, its \emph{Fenchel conjugate} is given by 
\begin{align}\label{eq:fenchel}
f^{*}(y)\coloneqq \sup_{x\in\R^{n}}\langle x,y\rangle - f(x),\qquad y\in\R^{n}. 
\end{align}
Moreover, for $x_{0}\in\mathrm{dom}(f)$, we say that $A\in\R^{n}$ is a \emph{subgradient} of $f$ at $x_{0}$ if 
\begin{align*}
f(x_{0})+\langle A,x-x_{0}\rangle\leq f(x)\qquad\text{for all}\;x\in\R^{n}. 
\end{align*}
The \emph{subdifferential} $\partial f(x_{0})$ of $f$ at $x_{0}$ is the set of all subgradients of $f$ at $x_{0}$. Then, if $f$ is differentiable at $x_{0}$, one has $\partial f(x_{0})=\{\nabla f(x_{0})\}$. 

As a consequence of \eqref{eq:fenchel}, one immediately gets the Fenchel--Young inequality 
\begin{align}\label{eq:FenchelYoung}
\langle x, y\rangle \leq f(x)+f^{*}(y)\qquad \text{for all}\;x,y\in\R^{n}. 
\end{align}
For a proper convex and lower semicontinuous function $f\colon\R^{n}\to\R\cup\{+\infty\}$, the cases of equality in \eqref{eq:FenchelYoung} are characterized by
\begin{align}\label{eq:FenchelLegendre}
\langle x,y\rangle = f(x) + f^{*}(y) \Longleftrightarrow y\in \partial f(x) \Longleftrightarrow x \in\partial f^{*}(y),
\end{align}
see \cite[Corollary E.1.4.4]{HiriartUrrutyLemarechal2001}. Next, we record a classical result on biconjugation: 
\begin{lemma}[Fenchel--Moreau, {\cite[Corollary 1.3.6]{HiriartUrrutyLemarechal2001}}]\label{lem:fenchelmoreau}
Let $f\colon\R^{n}\to \R\cup\{+\infty\}$ be a proper, convex, and lower semicontinuous function. Then there holds $f=f^{**}(\eqqcolon(f^{*})^{*})$.
\end{lemma}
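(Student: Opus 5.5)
The plan is to prove the two inequalities $f^{**}\leq f$ and $f\leq f^{**}$ separately. The first is formal, and the second rests on a Hahn--Banach (finite-dimensional separation) argument applied to the epigraph $\mathrm{epi}(f)\coloneqq\{(x,t)\in\R^{n}\times\R\colon f(x)\leq t\}$. For the easy direction, the Fenchel--Young inequality \eqref{eq:FenchelYoung} gives $\langle x,y\rangle-f^{*}(y)\leq f(x)$ for all $x,y$. Taking the supremum over $y$ yields $f^{**}(x)\leq f(x)$. This requires $f^{*}>-\infty$ everywhere, which holds because $f$ is proper.

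Next, I reformulate $f^{**}$ as a supremum of affine minorants. An affine map $\ell(x)=\langle y,x\rangle-b$ satisfies $\ell\leq f$ on $\R^{n}$ if and only if $b\geq f^{*}(y)$. Hence $f^{**}(x)=\sup\{\ell(x)\colon \ell \text{ affine},\ \ell\leq f\}$, with the convention $\sup\emptyset=-\infty$. It therefore suffices to show that for every $x_{0}\in\R^{n}$ and every real $t_{0}<f(x_{0})$ there is an affine $\ell\leq f$ with $\ell(x_{0})>t_{0}$. Since $f$ is convex and lower semicontinuous, $\mathrm{epi}(f)$ is a nonempty closed convex subset of $\R^{n+1}$, and $(x_{0},t_{0})\notin\mathrm{epi}(f)$. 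Strict separation of a point from a closed convex set gives $(y,\alpha)\in\R^{n}\times\R$ and $\gamma\in\R$ such that
\begin{align*}
\langle y,x\rangle+\alpha t<\gamma<\langle y,x_{0}\rangle+\alpha t_{0}\qquad\text{for all}\;(x,t)\in\mathrm{epi}(f).
\end{align*}
Since $t$ may be sent to $+\infty$, necessarily $\alpha\leq 0$.

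If $\alpha<0$ (non-vertical hyperplane), divide by $-\alpha$. Then $\ell(x)\coloneqq(\langle y,x-x_{0}\rangle)/(-\alpha)+t_{0}+\delta$ for a suitable small $\delta>0$, or more directly $\ell(x)\coloneqq(\gamma-\langle y,x\rangle)/\alpha$, is an affine minorant of $f$ with $\ell(x_{0})>t_{0}$. This case always occurs when $x_{0}\in\mathrm{dom}(f)$, since testing with $(x_{0},f(x_{0}))$ rules out $\alpha=0$.

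The main obstacle is the vertical case $\alpha=0$, which can only happen for $x_{0}\notin\mathrm{dom}(f)$, i.e. $f(x_{0})=+\infty$. To handle it, I will first produce one affine minorant $\ell_{1}\leq f$. This comes from applying the non-vertical case to some $x_{1}\in\mathrm{dom}(f)$, which exists because $f$ is proper. The vertical separation gives the affine function $h(x)\coloneqq\langle y,x\rangle-\gamma$, which satisfies $h<0$ on $\mathrm{dom}(f)$ and $h(x_{0})>0$. Then $\ell_{1}+sh$ is an affine minorant of $f$ for every $s>0$: on $\mathrm{dom}(f)$ we have $sh<0$, and off the domain $f=+\infty$. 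Moreover $(\ell_{1}+sh)(x_{0})\to+\infty$ as $s\to\infty$, so for large $s$ it exceeds $t_{0}$. In both cases $f^{**}(x_{0})>t_{0}$ for every $t_{0}<f(x_{0})$, whence $f^{**}\geq f$, and the proof will be complete.
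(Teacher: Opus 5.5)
Your proof is correct and complete. The Fenchel--Young bound gives $f^{**}\le f$, and $f^{**}$ is indeed the supremum of the affine minorants of $f$. Strict separation of $(x_0,t_0)$ from the closed convex set $\mathrm{epi}(f)$ then settles the non-vertical case, and you correctly handle the vertical case $\alpha=0$ by tilting a fixed minorant $\ell_1$ with $s h$. Producing $\ell_1$ also shows that $f^*$ is proper, so $f^{**}$ is defined in the paper's sense.

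The paper gives no proof of its own and simply cites Hiriart-Urruty and Lemar\'echal. Your argument is essentially the standard one behind that citation: a closed convex function is the supremum of its affine minorants, which one obtains by separating points from its epigraph.
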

Finally, in order to deal with recession functions in the main part, we require the following characterization via support functions. Here, we directly specify to functions of linear growth. 
\begin{lemma}[{\cite[Theorem 8.5, Theorem 13.3]{Rockafellar1970}}]\label{lem:fenchel}
Let $f\colon \R^{n}\to \R \cup \{+\infty\}$ be a convex and lower semicontinuous function of linear growth, meaning that \eqref{eq:lingrowth} holds with $F$ being replaced by $f$. Then the recession function $f^{\infty}$ from \eqref{eq:recessionfunction} can be expressed as 
\begin{align*}
f^{\infty}(z) \coloneqq  \lim_{t\searrow 0}tf\Big(\frac{z}{t}\Big)= \sup_{y \in {\mathrm{dom}(f^{*})}} \langle y, z\rangle \qquad\text{for all}\;z\in\R^{n}. 
\end{align*}
\end{lemma}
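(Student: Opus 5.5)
The plan is to combine biconjugation (Lemma \ref{lem:fenchelmoreau}) with two-sided bounds on $f^{*}$ coming from the linear growth, and then to squeeze $tf(z/t)$ between $h(z)\coloneqq\sup_{y\in\mathrm{dom}(f^{*})}\langle y,z\rangle$ and quantities converging to $h(z)$ as $t\searrow 0$. Since $f$ has linear growth it is finite everywhere, hence proper. Being convex and lower semicontinuous, Lemma \ref{lem:fenchelmoreau} gives, for every $x\in\R^{n}$,
\begin{align*}
f(x)=\sup_{y\in\mathrm{dom}(f^{*})}\big(\langle x,y\rangle - f^{*}(y)\big).
\end{align*}
Consequently, for all $t>0$ and $z\in\R^{n}$,
\begin{align*}
tf\Big(\frac{z}{t}\Big)=\sup_{y\in\mathrm{dom}(f^{*})}\big(\langle y,z\rangle - tf^{*}(y)\big).
\end{align*}

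The first step collects facts about $\mathrm{dom}(f^{*})$ and $f^{*}$. From the upper bound $f(x)\leq c_{3}(1+|x|)$ we get
\begin{align*}
f^{*}(y)\geq \sup_{x}\big(\langle x,y\rangle - c_{3}|x|\big)-c_{3},
\end{align*}
and the right-hand side equals $+\infty$ whenever $|y|>c_{3}$. Hence $\mathrm{dom}(f^{*})\subset\overline{\ball_{c_{3}}(0)}$ is bounded, so $h(z)$ is finite. The lower bound $f(x)\geq c_{1}|x|-c_{2}$ gives $f^{*}(y)\leq c_{2}$ for $|y|\leq c_{1}$, so $\mathrm{dom}(f^{*})$ is nonempty. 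Finally, testing the supremum defining $f^{*}$ with $x=0$ yields $f^{*}(y)\geq -f(0)$ for all $y$.

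For the upper estimate, we insert $-tf^{*}(y)\leq tf(0)$ into the representation above. This gives
\begin{align*}
tf\Big(\frac{z}{t}\Big)\leq h(z)+tf(0),
\end{align*}
so $\limsup_{t\searrow 0}tf(z/t)\leq h(z)$.

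For the lower estimate, we fix any $y_{0}\in\mathrm{dom}(f^{*})$. Then
\begin{align*}
tf\Big(\frac{z}{t}\Big)\geq\langle y_{0},z\rangle - tf^{*}(y_{0}),
\end{align*}
and since $f^{*}(y_{0})$ is finite, letting $t\searrow 0$ gives $\liminf_{t\searrow 0} tf(z/t)\geq\langle y_{0},z\rangle$. Taking the supremum over $y_{0}$ gives $\liminf\geq h(z)$. Together with the upper estimate, the limit exists and equals $h(z)$, which proves the claim.

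The only delicate point is the first step, namely the boundedness of $\mathrm{dom}(f^{*})$ and the uniform lower bound on $f^{*}$. Without these, $h(z)$ could be infinite, or the term $tf^{*}(y)$ could fail to vanish uniformly in $y$ in the upper estimate. Both are settled by the linear growth hypothesis and the single test point $x=0$.
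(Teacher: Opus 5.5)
Your proof is correct, but it takes a different route from the paper. The paper gives no argument of its own and simply combines two results from Rockafellar:
\begin{itemize}
\item Theorem 8.5: for closed proper convex $f$ and $x\in\mathrm{dom}(f)$, the quotients $\frac{f(x+\lambda z)-f(x)}{\lambda}$ converge as $\lambda\to\infty$ to the recession function. With $x=0$, $\lambda=1/t$ and $tf(0)\to 0$, this is exactly the existence of $\lim_{t\searrow 0}tf(z/t)$.
\item Theorem 13.3: this recession function is the support function of $\mathrm{dom}(f^{*})$.
\end{itemize}
You instead obtain existence of the limit and its value in a single squeeze. You write $tf(z/t)=\sup_{y\in\mathrm{dom}(f^{*})}(\langle y,z\rangle-tf^{*}(y))$ via Lemma~\ref{lem:fenchelmoreau}, bound it from above using the uniform estimate $f^{*}\geq -f(0)$, and from below using a single fixed $y_{0}$. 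This gives a self-contained proof that uses only a tool the paper already records, at essentially no cost in length.

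Your closing paragraph overstates what is needed. The squeeze uses only $f(0)<\infty$ and $\mathrm{dom}(f^{*})\neq\emptyset$, and the latter is automatic for proper closed convex $f$, since otherwise $f^{**}\equiv-\infty$. If $h(z)=+\infty$, your lower estimate still gives $\lim_{t\searrow0}tf(z/t)=+\infty=h(z)$. So the boundedness of $\mathrm{dom}(f^{*})$, coming from the upper growth bound, only serves to make both sides finite, and the lower growth bound is not needed at all. Your argument therefore works at essentially the generality of Rockafellar's theorems, with the origin playing the role of a point of $\mathrm{dom}(f)$.
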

\section{Previous approaches and outline of the proof of Theorem \ref{thm:main}}\label{sec:outline} 
In this section, we briefly revisit the available strategies to arrive at Sobolev regularity for $1<\mu\leq 3$, and discuss the available examples that lead to singular $\bv$-minimizers. Based on the latter, we give a detailed outline of our approach to be executed in \S\ref{sec:integrands} and \S\ref{sec:singularity}. 
\subsection{Approaches to $\sobo^{1,1}$-regularity and available thresholds}\label{sec:approaches}
We start by explaining where the assumption of $\mu\leq 3$ enters the existing proofs of $\sobo^{1,1}$-regularity of locally bounded $\bv$-minimizers; this summarizes the strategies developed in \cite{BILDHAUER2002MUBOUNDED,BECSCH13,GMEINEDER20,BECEITGME,SCHMIDTHabil}. Given a locally bounded $\bv$-minimizer $u$, one firstly constructs a sequence $(u_{j})\subset \sobo_{\locc}^{2,2}(\Omega;\R^{N})\cap\bv(\Omega;\R^{N})\cap\lebe_{\locc}^{\infty}(\Omega;\R^{N})$ such that 
\begin{align}\label{eq:strat}
\mathscr{F}[u_{j};\Omega] \to \overline{\mathscr{F}}_{u_{0}}^{*}[u;\Omega]\;\;\;\text{and}\;\;\;\sup_{j\in\mathbb{N}}\int_{\Omega'}\frac{|\nabla^{2}u_{j}|^{2}}{(1+|\nabla u_{j}|)^{\mu}}\,\,\dif x <\infty
\end{align}
for every relatively compact set $\Omega'\Subset\Omega$. To circumvent the potential non-uniqueness of $\bv$-minimizers, this is achieved by combining classical vanishing viscosity methods with Ekeland's variational principle \cite{EKELAND74}, and improved versions to keep local $\lebe^{\infty}$-bounds have been given in \cite{BECEITGME,SCHMIDTHabil}. While being well suited for universal regularity estimates, this approach comes to the effect that the $u_{j}$'s only satisfy an Euler-Lagrange \emph{inequality}
\begin{align}\label{eq:perturbed}
\left\vert \int_{\Omega}\langle \nabla F(\nabla u_{j}),\nabla\varphi\rangle\,\,\dif x  + (\text{viscosity stabilization terms)} \right\vert\leq \frac{1}{j}\|\varphi\|_{\sobo^{-1,1}(\Omega)}
\end{align}
for all $\varphi\in\sobo_{0}^{1,1}(\Omega;\R^{N})$. It is now that one utilizes \eqref{eq:strat} by testing \eqref{eq:perturbed} with special maps that give access to the $\lebe\log^{2}\lebe_{\locc}$-estimates. Dating back to Bildhauer \cite{BILDHAUER2002MUBOUNDED,BILDHAUER03}, the state-of-the-art choice still is $\varphi=\rho^{2}\log^{2}(1+|\nabla u_{j}|^{2})u_{j}$. Inserting this map into \eqref{eq:perturbed} yields three leading terms, depending on which factor is covered by the product rule. By the uniform local boundedness of the $u_{j}$'s, the left-hand side of the overall inequality produces the quantity $\rho^{2}\log^{2}(1+|\nabla u_{j}|^{2})\nabla u_{j}$, while the right-hand side of the overall inequality then contains the critical term 
\begin{align}\label{eq:critico}
\rho^{2}\left\vert\nabla \left( \log^{2}(1 + |\nabla u_{j}|^{2}) \right)\right\vert |u_{j}|  & \lesssim \rho^{2}\frac{\log(1 + |\nabla u_{j}|^2)}{(1 + |\nabla u_{j}|^2 )^{\frac{1}{2}}} |\nabla^{2}u_{j}| \coloneqq \mathrm{I}_{j}, 
\end{align}
where we recall that $u_{j}$ is uniformly locally bounded. In order to absorb the critical singularity into the overall left-hand side, one is bound to apply Young's inequality as follows: 
\begin{align*}
\mathrm{I}_{j} \leq \varepsilon \rho^{2}\log^{2}(1+|\nabla u_{j}|^{2})(1+|\nabla u_{j}|^{2})^{\frac{1}{2}} + \rho^{2} \frac{|\nabla^{2}u_{j}|^{2}}{(1+|\nabla u_{j}|^{2})^{\frac{3}{2}}}. 
\end{align*}
By \eqref{eq:strat}, the ultimate term is only controllable if $\mu\leq 3$. We note that higher powers of the logarithmic factors in $\varphi$ come with a similar issue; see \cite[Remark 4.11]{BECEITGME} in the related symmetric gradient case.

The only $\mu$-independent $\sobo^{1,1}$-regularity result we are aware of is due to Beck, Buli\v{c}ek and the author \cite{BECBULGME20}, where the radially symmetric \emph{Neumann problem}
\begin{align}\label{eq:neumann}
\text{to minimize}\;\;\;\int_{\Omega}f(|\nabla u|)-g\cdot \nabla u\,\,\dif x \;\;\text{over}\;\;\;\sobo^{1,1}(\Omega;\R^{N})\;\text{with}\;(u)_{\Omega}=0
\end{align}
with an open and bounded, simply connected Lipschitz domain is considered. Here, the inhomogeneity satisfies $g\in\sobo^{2,\infty}(\Omega;\R^{N})$. The outcome is that, if $F=f(|\cdot|)\in\hold^{2}(\R^{N\times n})$ satisfies $F''>0$ with a natural upper growth bound, then solutions of \eqref{eq:neumann} in fact belong to $\sobo^{1,1}(\Omega;\R^{N})$. The proof strategy, however, does not hinge on quantitative estimates. Instead, letting $(u_{j})$ be a suitable vanishing viscosity sequence, the biting limit of $(\nabla u_{j})$ in the sense of \cite{BallMurat1989} is shown to be curl-free. Hence, because $\Omega$ is simply connected, it is a gradient itself, and is subsequently shown to lead to a minimizer of \eqref{eq:neumann}. In view of Theorem \ref{thm:main}, we firstly note that the result from \cite{BECBULGME20} is only known to hold for radially symmetric integrands, which are not covered by Theorem \ref{thm:main}; see also Remark \ref{rem:DF}. Secondly, despite the radial symmetry of $F=f(|\cdot|)$, the proof strategy from \cite{BECBULGME20} does \emph{not} apply to the Dirichlet problem on $\bv$. In particular, in these scenarios, Theorem \ref{thm:main} does not make any assertion on the potential irregularity of $\bv$-minimizers. 

Returning to the Dirichlet problem on $\bv$, Bildhauer \cite[Chapter 4.4]{BILDHAUER03} constructed an integrand $\hold^{2}(\overline{\Omega}\times\R^{2})$ which is of linear growth and $\mu$-elliptic uniformly in $x\in\overline{\Omega}$, yet possesses a singular $\bv$-minimizer $u$. From a construction viewpoint, this approach is a sophisticated generalization of the strategy employed in the classical work \cite[Example 3.2]{GIAMODSOU79} by Giaquinta, Modica \& Sou\v{c}ek via radial symmetry. For $p>2$, $N=n=1$ and $\Omega=(-1,1)$, the latter  hinges on the functional 
\begin{align}\label{eq:GMS}
\mathscr{F}[u;\Omega] \coloneqq \int_{-1}^{1}(1+(1+|x|^{2})|u'(x)|^{p})^{\frac{1}{p}},\qquad u\in\sobo^{1,1}((-1,1)), 
\end{align}
which is minimized subject to 
\begin{align*}
u(-1)=-m,\;\;\;u(1)=m,\qquad\text{where}\;m>0\;\text{is such that}\;\int_{-1}^{1}((1+x^{2})^{\frac{p}{p-1}}-1)^{-\frac{1}{p}}\,\,\dif x < m.
\end{align*}
Even though for fixed $p\in(1,\infty)\setminus\{2\}$, the integrand in \eqref{eq:GMS} is not $\mu$-elliptic (in fact, this is due to its singular or degenerate behaviour for the value $z=0$ of the gradient variable, see \cite{BECSCH15}), it can be made into a $\mu=p+1$-elliptic integrand. Now, as established in \cite{GIAMODSOU79}, it is indeed the minimum $x=0$ of the weight $(1+|x|^{2})$ appearing in \eqref{eq:GMS} which makes its energetically less unfavourable for a minimizer to exhibit a jump at $x=0$. It is important to note that this phenomenon occurs despite the $\hold^{2}$-regularity in $x$ and even though the corresponding weight is bounded away from zero. Yet, the underlying $x$-dependence is essential for this sort of argument; this methodological point is already visible in Mania's related classical work \cite{Mania1934}.

Making a radially symmetric ansatz $F=f(|\cdot|)$ in the autonomous case (that is, without $x$- or $u$-dependence) to create singular $\bv$-minimizers comes with several obstructions. One of the main issues is that, when considering the stress $\sigma$ (which corresponds to $F'(\nabla u)=f'(|\nabla u|)\frac{\nabla u}{|\nabla u|}$ in the absolutely continuous setting, see \cite{BeckSchmidt2015} for the general $\bv$-situation), $\sigma$ is necessarily parallel to $\nabla u$. In our approach below, we crucially use that already the approximate gradient of the singular $\bv$-minimizer satisfies an Euler-Lagrange equation by itself. When aiming to come up with a $\bv$-minimizer with the analogous property in the radially symmetric setting, it is this rigidity reflected by $\sigma$ being parallel to $\nabla u$ which seems to rule out separation of variable-approaches as employed in \S\ref{sec:singularity} below. This in itself, however, does not exclude the possibility of creating non-trivial Cantor parts on suitable fractals. In the context of double phase functionals and their generalizations, limiting examples and irregular minimizers have been constructed in \cite{BalciDieningSurnachev2020,BalciDieningSurnachev2025,FonsecaMalyMingione2004}, but crucially use the explicit $x$-dependence of the integrands. In particular, it is unclear how similar strategies can be made to work in the framework of Theorem \ref{thm:main}.

\subsection{Strategy in the proof of Theorem \ref{thm:main}}
Let $n=2$ and $N=1$. In view of Theorem \ref{thm:main} and by the above discussion, it is natural to seek an integrand $F\colon\R^{2}\to\R$ which is anisotropic. In the related context of $(p,q)$-type functionals, explicit examples are given in \cite{Giaquinta1987,Marcellini1987,MARCELLINI89}, but these pertain to superlinear growth scenarios and cannot be modified by easy means to apply to the situation at our disposal. In view of Theorem \ref{thm:main}, the simplest way to come up with an anisotropic, $\mu$-elliptic integrand is to require eigenvalue scales in $e_{1}$-- and $e_{2}$--directions by 
\begin{align*}
\langle \nabla^{2}F(z)e_{1},e_{1}\rangle \lesssim z_{2}^{-1}\lesssim |z|^{-1}\;\;\;\text{and}\;\;\;\langle\nabla^{2}F(z)e_{2},e_{2}\rangle \gtrsim |z|^{-\mu}
\end{align*}
at least for large $z_{1},z_{2}>0$ with $z_{1}\lesssim z_{2}$. In this case, the second requirement reduces to $\langle\nabla^{2}F(z)e_{2},e_{2}\rangle \gtrsim z_{2}^{-\mu}$. A suitable ansatz thus is to consider 
\begin{align*}
\mathfrak{f}(z_{1},z_{2})= Az_{2}^{2-\mu} + b\frac{z_{1}^{2}}{z_{2}}\qquad\text{for large $z_{1},z_{2}>0$ such that $z_{1}\lesssim z_{2}$}
\end{align*}
for $A,b>0$. This particular choice is of linear growth in the far upper halfplane $\mathbb{H}$ subject to $z_{1}\lesssim z_{2}$. Finally aiming to construct a minimizer which has a jump in the $e_{2}$-direction, the recession function of the integrand must be non-trivial at $e_{2}$, for otherwise the jump is ignored in the relaxed functional. Since $\mathfrak{f}^{\infty}(e_{2})=0$, we modify $\mathfrak{f}$ to 
\begin{align}\label{eq:2F0}
F_{0}(z_{1},z_{2})=Lz_{2} + Az_{2}^{2-\mu}+b\frac{z_{1}^{2}}{z_{2}}
\end{align}
for some $L>0$. On suitable cones $\mathcal{C}_{R,\delta}\subset\mathbb{H}\coloneqq \{z_{2}>0\}$ opened in $e_{2}$-direction with $\mathrm{dist}(\mathcal{C}_{R,\delta},\partial\mathbb{H})>0$ (see \eqref{eq:cones} and Figure \ref{fig:cone}), this function is indeed $\mu$-elliptic and of linear growth; this is established in Lemma \ref{lem:muelliptic1}. Next note that $F_{0}$ blows up as $z_{2}\searrow 0$, whereby $F_{0}$ cannot be extended to an integrand of linear growth on $\R^{2}$. Our plan is therefore to consider $F_{0}$ on cones $\mathcal{C}_{R,\delta}$ first and then to construct a convex $\hold^{2}$-integrand $F\in\hold^{2}(\R^{n})$ such that $F=F_{0}$ on $\mathcal{C}_{R,\delta}$. In parallel, $F$ must remain $\mu$-elliptic. That this is possible is indeed one of the main results of the present paper, see Theorem \ref{thm:muellipticextension}. 

Before we address the construction of this extension $F$, let us note that the identity $F=F_{0}$ on $\mathcal{C}_{R,\delta}$ is primarily motivated by the eventual construction of a singular $\bv$-minimizer: If $F=F_{0}$ on cones $\mathcal{C}_{R,\delta}$, the latter opening in $e_{2}$-direction, $\frac{1}{t}e_{2}\in \mathcal{C}_{R,\delta}$ holds for all sufficiently small $t>0$. Hence the recession function satisfies $F^{\infty}(e_{2})=F_{0}^{\infty}(e_{2})=L$ and thus is capable to see non-trivial jumps in $e_{2}$-direction. 

In the following, we provide some heuristics. For \emph{any} linear growth integrand $F$ and $\sobo^{1,1}$-map $v$, the stress $\sigma=\nabla F(\nabla v)$ is bounded and contained in the effective domain $\mathrm{dom}(F^{*})$. Approximating $\bv$-maps $u$ by $\sobo^{1,1}$-maps $v_{j}$, $\sigma_{j}=\nabla F(\nabla v_{j}(x))$ will converge to $\partial\mathrm{dom}(F^{*})$ at singular points $x$. Hence, large values of $\nabla u$ or non-trivial densities $\frac{\dif\D^{s}u}{\dif|\D^{s}u|}$ are contained in small neighbourhoods of $\partial\mathrm{dom}(F^{*})$; see \S \ref{sec:convex} for this terminology. Since we aim to construct $F$, this dual viewpoint suggests to firstly extend $F_{0}$ to $\R^{2}$ via $F_{0}=+\infty$ on $\R^{2}\setminus\overline{\mathbb{H}}$ and to then examine its Fenchel conjugate.  The effective domain $\mathrm{dom}(F_{0}^{*})$, however, is unbounded from below, a fact that is linked to $F_{0}=+\infty$ on $\R^{2}\setminus\overline{\mathbb{H}}$. To arrive at a linear growth integrand, we thus modify the effective domain and make it into a convex and \emph{bounded} set $K$. As mentioned above, this must be accomplished in a way such that the dual variable corresponding to $\frac{\dif\D^{s}u}{\dif|\D^{s}u|}=e_{2}$ is unaffected by this modification. As will be discussed below, this is precisely the dual point $(0,L)$ (that is, the vertex in the parabola displayed in Figure \ref{fig:mod}), in a neighbourhood of which $K$ and $\mathrm{dom}(F_{0}^{*})$ have to coincide.

Calling the emerging integrand $G_{0}$, $F\coloneqq G_{0}^{*}$ has linear growth by construction; recall that $K$ is bounded. Moreover, the cones $\mathcal{C}_{R,\delta}$ in the primal picture correspond to small neighbourhoods of $(0,L)$ in $K$, which is why $F=F_{0}$ holds on such cones (see Proposition \ref{prop:coincidence}). During this modification process, we moreover have to ensure that $F$ is in fact $\mu$-elliptic. This is the content of Proposition \ref{prop:Fmuell}.

All of the preceding steps solely require $\mu>2$. It is thus the construction of the particular $\bv$-minimizer $u$ which requires  $\mu>3$. The construction underlying the absolutely continuous part $\D^{a}  u=\nabla v\mathscr{L}^{2}$ for $v\in\sobo^{1,1}(\Omega)$ hinges on a product ansatz. This crucially exploits the structure of the integrand $F_{0}$, so that the distributional solenoidality of the associated stress $\nabla F(\nabla v)$ can essentially be reduced to an ordinary differential equation (see Lemma \ref{lem:wconstruct}). This, in turn, determines the underlying set $\Omega$.  Since $F$ is defined abstractly as the Fenchel conjugate, gaining access to $F=F_{0}$ requires the inclusion $\nabla v\in\mathcal{C}_{R,\delta}$ $\mathscr{L}^{2}$-a.e. in $\Omega$ (see Lemma \ref{lem:contained1}). As a key point, this construction can only yield $\nabla v\in\lebe^{1}(\Omega;\R^{2})$ if $\mu>3$. Adding a jump to $v$ finally yields the requisite map, and it is then shown that this map is a $\bv$-minimizer with respect to its own boundary values; this shall settle Theorem \ref{thm:main}.

\section{Construction of the $\mu$-elliptic integrand $F$}\label{sec:integrands}
\begin{figure}
\begin{tikzpicture}
\draw[->] (0,-0.2) -- (0,2.75); 
\draw[->] (-4,0) -- (4,0);
\draw[-,white,fill=blue!20!white,opacity=.4] (-4, 2.5) -- (-1.4,0.874) -- (1.4,0.874) -- (4,2.5) -- (-4,2.5);
\draw[-,dotted] (-4, 2.5) -- (0,0) -- (4,2.5);
\draw[-,dotted] (-4, 2.5) -- (0,0) -- (4,2.5);
\draw[-,blue] (-4, 2.5) -- (-1.4,0.874) -- (1.4,0.874) -- (4,2.5);
\node at (1.5,0.32) {$x_{2}=\frac{1}{\delta}|x_{1}|$};
\node[right] at (4,0) {$x_{1}$};
\node[above] at (0,2.75) {$x_{2}$};
\draw[dashed] (-4,0.874) -- (4,0.874); 
\node[right] at (4,0.874) {$R$};
\node[blue!50!white] at (1.25,1.75) {\large $\mathcal{C}_{R,\delta}$};
\end{tikzpicture}
\caption{The cones from \eqref{eq:cones}, which shall eventually contain the gradients of a singular $\bv$-minimizer.}
\label{fig:cone}
\end{figure}
In this section, we give the detailed construction of the integrand $F$ that underlies Theorem \ref{thm:main}. Throughout, we suppose that $\mu>2$.  Moreover, let $L,A,b>0$. In what follows, we denote the upper halfspace in $\R^{2}$ by $\mathbb{H}\coloneqq \R\times (0,\infty)$. As explained in \S \ref{sec:outline}, see \eqref{eq:2F0}, we begin by defining $F_{0}\colon\mathbb{H}\to\R$ via  
\begin{align}\label{eq:main}
F_{0}(z) \coloneqq Lz_{2}+Az_{2}^{2-\mu}+b\frac{z_{1}^{2}}{z_{2}},\qquad z=(z_{1},z_{2})\in\mathbb{H},  
\end{align}
which we initially consider on cones 
\begin{align}\label{eq:cones}
\mathcal{C}_{R,\delta}\coloneqq \{z=(z_{1},z_{2})\in\R^{2}\colon\;z_{2}>R\;\;\text{and}\;\;\;|z_{1}|<\delta z_{2}\},
\end{align}
where $R,\delta>0$, see Figure \ref{fig:cone}. The chief reason for considering $F_{0}$ on $\mathcal{C}_{R,\delta}$ is that, on such cones, $F_{0}$ from \eqref{eq:main} in fact has linear growth, with underlying constants depending on $R$ and $\delta$. Moreover, we clearly have $F_{0}\in\hold^{2}(\mathbb{H})$. For future reference, we compute for $z\in\mathbb{H}$
\begin{align}\label{eq:firstvariation}
\nabla F_{0}(z)=\left(\begin{array}{c} \frac{2bz_{1}}{z_{2}} \\ L + (2-\mu)Az_{2}^{1-\mu}-\frac{b z_{1}^{2}}{z_{2}^{2}}\end{array} \right)
\end{align}
and
\begin{align}\label{eq:2ndderiv}
\begin{split}
\nabla^{2}F_{0}(z) & = \left( \begin{matrix}  \frac{2b}{z_{2}} & - \frac{2bz_{1}}{z_{2}^{2}} \\ - \frac{2bz_{1}}{z_{2}^{2}} & (2-\mu)(1-\mu)Az_{2}^{-\mu}+2\frac{bz_{1}^{2}}{z_{2}^{3}}\end{matrix}\right) \\ 
& = \frac{2b}{z_{2}}\left(\begin{matrix} 1 \\ -\frac{z_{1}}{z_{2}}\end{matrix} \right)\otimes \left(\begin{matrix} 1 \\ -\frac{z_{1}}{z_{2}}\end{matrix} \right) + A(\mu-2)(\mu-1)z_{2}^{-\mu}e_{2}\otimes e_{2}. 
\end{split}
\end{align}
We explicitly record that $F_{0}$ is $\mu$-elliptic on the cones from \eqref{eq:cones}.  
\begin{lemma}\label{lem:muelliptic1}
Let $\mu>2$, $R\geq 1$ and $\delta>0$. Then there exist constants $c=c(A,b,\mu,\delta)>0$ and $C=C(A,b,\mu,\delta)>0$, both independent of $R$, such that 
\begin{align}\label{eq:muellthefirst!}
c(1+|z|)^{-\mu}|\xi|^{2}\leq \langle \nabla^{2} F_{0}(z)\xi,\xi\rangle \leq C(1+|z|)^{-1}|\xi|^{2}
\end{align}
holds for all $z\in\mathcal{C}_{R,\delta}$ and all $\xi\in\R^{2}$.
\end{lemma}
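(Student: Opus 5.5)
We work directly with the decomposition \eqref{eq:2ndderiv}. Write $w\coloneqq(1,-\tfrac{z_1}{z_2})^{\top}$, so that for $\xi=(\xi_1,\xi_2)$
\begin{align*}
\langle\nabla^2F_0(z)\xi,\xi\rangle = \frac{2b}{z_2}\Big(\xi_1-\frac{z_1}{z_2}\xi_2\Big)^2 + A(\mu-2)(\mu-1)z_2^{-\mu}\xi_2^2 .
\end{align*}
Both summands are nonnegative since $\mu>2$. The basic comparability is this: on $\mathcal{C}_{R,\delta}$ we have $z_2\le|z|\le\sqrt{1+\delta^2}\,z_2$, and since $z_2>R\geq1$, also $1+|z|\le(1+\sqrt{1+\delta^2})z_2$ and $z_2\le 1+|z|$. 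Thus $z_2\simeq 1+|z|$ with constants depending only on $\delta$, which is why $R$ does not enter.

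\emph{Upper bound.} We have $|w|^2=1+z_1^2/z_2^2\le1+\delta^2$. Hence the first term is at most $2b(1+\delta^2)z_2^{-1}|\xi|^2$. The second term is at most $A(\mu-2)(\mu-1)z_2^{-\mu}|\xi|^2$, and $z_2^{-\mu}\le z_2^{-1}$ because $z_2\ge1$. Converting $z_2^{-1}\le(1+\sqrt{1+\delta^2})(1+|z|)^{-1}$ gives $C$.

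\emph{Lower bound.} This is the only step needing care, since the rank-one term degenerates in the direction orthogonal to $w$. Set $\kappa\coloneqq A(\mu-2)(\mu-1)$ and $t\coloneqq z_1/z_2\in(-\delta,\delta)$. Because $z_2^{-1}\ge z_2^{-\mu}$, we get
\begin{align*}
\langle\nabla^2F_0(z)\xi,\xi\rangle\ge z_2^{-\mu}\big(2b(\xi_1-t\xi_2)^2+\kappa\xi_2^2\big).
\end{align*}
The quadratic form $Q_t(\xi)=2b(\xi_1-t\xi_2)^2+\kappa\xi_2^2$ is positive definite for each $t$: if $Q_t(\xi)=0$ then $\xi_2=0$, and then $\xi_1=0$. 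Its smallest eigenvalue depends continuously on $t\in[-\delta,\delta]$, a compact set, so it has a positive minimum $c_0(b,\kappa,\delta)$.

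For an explicit constant one can use the elementary inequality $(\xi_1-t\xi_2)^2\ge\tfrac12\xi_1^2-t^2\xi_2^2$. This gives
\begin{align*}
Q_t(\xi)\ge \varepsilon\big(b\xi_1^2-2b\delta^2\xi_2^2\big)+\kappa\xi_2^2,
\end{align*}
valid after first scaling the rank-one term by a factor $\varepsilon\in(0,1]$, i.e.\ using $2b(\xi_1-t\xi_2)^2\ge 2b\varepsilon(\xi_1-t\xi_2)^2$. Choosing $\varepsilon=\min\{1,\kappa/(4b\delta^2)\}$ makes both coefficients positive.

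Finally, $z_2^{-\mu}\ge(1+|z|)^{-\mu}$ since $z_2\le1+|z|$. Therefore $c=c_0$ works, and $c$ depends only on $A,b,\mu,\delta$.
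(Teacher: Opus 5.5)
Your proof is correct. The upper bound matches the paper's. Both rest on $z_{2}\simeq 1+|z|$ on $\mathcal{C}_{R,\delta}$, which follows from $|z_{1}|<\delta z_{2}$ and $z_{2}>R\geq 1$.

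The lower bound is organized differently. The paper keeps the weights $z_{2}^{-1}$ and $z_{2}^{-\mu}$ separate and splits according to the direction of $\xi$:
\begin{itemize}
\item if $|\xi_{2}|\geq\varepsilon|\xi|$, it uses only the $e_{2}\otimes e_{2}$-term;
\item if $|\xi_{2}|<\varepsilon|\xi|$, it uses only the rank-one term, together with $|\xi_{1}-\tfrac{z_{1}}{z_{2}}\xi_{2}|\geq\vartheta(\varepsilon,\delta)|\xi|$, where $\varepsilon<(1+\delta^{2})^{-1/2}$ guarantees $\vartheta>0$.
\end{itemize}

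You instead use $z_{2}\geq 1$ to factor out the weaker weight $z_{2}^{-\mu}$ at the start. This reduces everything to uniform positive definiteness of the constant-coefficient family $Q_{t}$ over the compact slope range $t=z_{1}/z_{2}\in[-\delta,\delta]$. You settle that in one of two ways:
\begin{itemize}
\item by continuity of the smallest eigenvalue, as in Lemma \ref{lem:eigenvalues};
\item by the absorption inequality $(\xi_{1}-t\xi_{2})^{2}\geq\tfrac{1}{2}\xi_{1}^{2}-t^{2}\xi_{2}^{2}$. This gives the explicit constant $\min\{\varepsilon b,\kappa/2\}$, with your $\kappa=A(\mu-2)(\mu-1)$ and $\varepsilon=\min\{1,\kappa/(4b\delta^{2})\}$.
\end{itemize}

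Your reduction avoids the case distinction and makes transparent that $z$ enters only through the scale $z_{2}$ and the bounded slope $t$. The paper's split is equally elementary. It also shows that for $|\xi_{2}|<\varepsilon|\xi|$ one has the stronger bound $\gtrsim z_{2}^{-1}|\xi|^{2}$, though this is discarded at the end via $|z|\leq(1+|z|)^{\mu}$.
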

\begin{proof}
This is a consequence of \eqref{eq:2ndderiv}. Indeed, for $z\in\mathcal{C}_{R,\delta}$ and $\xi\in\R^{2}$, one has 
\begin{align}\label{eq:refreshingbeverage}
\langle \nabla^{2}F_{0}(z)\xi,\xi\rangle & = \frac{2b}{z_{2}}\Big(\xi_{1}-\xi_{2}\frac{z_{1}}{z_{2}}\Big)^{2}+ A(\mu-2)(\mu-1)z_{2}^{-\mu}\xi_{2}^{2}. 
\end{align}
Fix an arbitrary $\varepsilon>0$ such that
\begin{align}\label{eq:wearehere}
    0<\varepsilon<\frac{1}{(1+\delta^{2})^{\frac{1}{2}}},\;\;\;\text{so that}\;\;\;\vartheta(\varepsilon,\delta)\coloneqq (1-\varepsilon^{2})^{\frac{1}{2}}-\varepsilon\delta>0.
\end{align}
If $|\xi_{2}|\geq \varepsilon |\xi|$, we immediately get from \eqref{eq:refreshingbeverage} the lower bound 
\begin{align}\label{eq:orchestrion}
\langle \nabla^{2}F_{0}(z)\xi,\xi\rangle & \geq \varepsilon^{2}A(\mu-2)(\mu-1)(1+|z|)^{-\mu}|\xi|^{2}.
\end{align}
Now suppose that $|\xi_{2}|<\varepsilon|\xi|$. Then we have 
\begin{align}\label{eq:jotel}
\xi_{1}^{2} = |\xi|^{2}-\xi_{2}^{2} >(1-\varepsilon^{2})|\xi|^{2}
\end{align}
and therefore 
\begin{align*}
\left\vert\xi_{1}-\xi_{2}\frac{z_{1}}{z_{2}} \right\vert  & \geq |\xi_{1}| - |\xi_{2}|\left\vert\frac{z_{1}}{z_{2}}\right\vert \stackrel{z\in\mathcal{C}_{R,\delta}}{\geq}|\xi_{1}|-\delta|\xi_{2}|  \\ &  \!\!\!\!\!\!\!\! \!\!\!\!\!\stackrel{\eqref{eq:jotel},\,|\xi_{2}|<\varepsilon|\xi|}{\geq}\Big((1-\varepsilon^{2})^{\frac{1}{2}}-\varepsilon\delta\Big)|\xi| \stackrel{\eqref{eq:wearehere}}{\geq}\vartheta(\varepsilon,\delta)|\xi|.
\end{align*}
Since $z_{2}>0$, this implies by use of $|z|\leq (1+|z|)^{\mu}$ that 
\begin{align*} 
\langle \nabla^{2}F_{0}(z)\xi,\xi\rangle \geq \frac{2b}{|z|}\vartheta(\varepsilon,\delta)^{2}|\xi|^{2}\geq 2b\vartheta(\varepsilon,\delta)^{2}(1+|z|)^{-\mu}|\xi|^{2}, 
\end{align*}
and combining the previous estimate with \eqref{eq:orchestrion}, the lower bound in \eqref{eq:muellthefirst!} follows. For the upper bound in \eqref{eq:muellthefirst!}, we first record that 
\begin{align*}
(1+|z|)^{2} & \leq 2(1+|z|^{2}) = 2(1+z_{1}^{2}+z_{2}^{2}) \stackrel{z_{2}\geq R\geq 1,\,|z_{1}|<\delta|z_{2}|}{\leq} 2(2+\delta^{2})z_{2}^{2}
\end{align*}
and so 
\begin{align}\label{eq:vicecity}
\frac{1}{z_{2}}\leq \frac{(2(2+\delta^{2}))^{\frac{1}{2}}}{1+|z|}\;\;\;\text{and}\;\;\;\frac{1}{z_{2}^{\mu}}\leq \frac{(2(2+\delta^{2}))^{\frac{\mu}{2}}}{(1+|z|)^{\mu}}\leq \frac{(2(2+\delta^{2}))^{\frac{\mu}{2}}}{1+|z|}
\end{align}
for all $z\in\mathcal{C}_{R,\delta}$. 
Going back to \eqref{eq:refreshingbeverage} and using $|z_{1}|<\delta z_{2}$ because of $z\in\mathcal{C}_{R,\delta}$, \eqref{eq:vicecity} immediately gives the upper bound in \eqref{eq:muellthefirst!}. This completes the proof. 
\end{proof}
As established in the previous lemma, $F_{0}$ is $\mu$-elliptic on the cones $\mathcal{C}_{R,\delta}$ for $R\geq 1$ and $\delta>0$. The core of this paper is contained in the following result, asserting that $F_{0}$ can be extended from $\mathcal{C}_{R,\delta}$ to the entire $\R^{2}$ as a globally $\mu$-elliptic integrand.  
 \begin{theorem}[$\mu$-elliptic extension]\label{thm:muellipticextension}
Let $\mu>2$. Then there exists a \emph{$\mu$-elliptic integrand} $F\in\hold^{2}(\R^{2})$ of linear growth with the following property: There exists a number $\delta_{0}>0$ such that, for any $0<\delta<\delta_{0}$, there exists $R=R(\delta)>1$ such that $F=F_{0}$ holds on $\mathcal{C}_{R,\delta}$. 
 \end{theorem}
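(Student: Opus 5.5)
The plan is to work on the dual side, as outlined in \S\ref{sec:outline}. First extend $F_{0}$ by $+\infty$ outside $\mathbb{H}$ (and by its lower semicontinuous value on $\partial\mathbb{H}$) and compute $F_{0}^{*}$ near the dual point $(0,L)=\lim_{z_{2}\to\infty}\nabla F_{0}(0,z_{2})$. By \eqref{eq:firstvariation} and \eqref{eq:2ndderiv}, $\nabla F_{0}$ is a $\hold^{1}$-diffeomorphism from $\mathcal{C}_{R,\delta}$ onto a region $U_{R,\delta}$ that lies just below the parabola-like curve $y_{2}=L-\frac{y_{1}^{2}}{4b}$. Indeed, $y_{1}=2bz_{1}/z_{2}$, and $y_{2}=L-\frac{y_{1}^{2}}{4b}-(\mu-2)Az_{2}^{1-\mu}$, so $U_{R,\delta}$ is the set $\{|y_{1}|<2b\delta,\ 0<L-\frac{y_{1}^{2}}{4b}-y_{2}<(\mu-2)AR^{1-\mu}\}$. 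On $U_{R,\delta}$ we have the explicit Legendre transform $G_{0}\coloneqq F_{0}^{*}=\langle y,(\nabla F_{0})^{-1}(y)\rangle-F_{0}((\nabla F_{0})^{-1}(y))$. Writing $s\coloneqq L-\frac{y_{1}^{2}}{4b}-y_{2}$, one gets $z_{2}=(s/((\mu-2)A))^{-1/(\mu-1)}$ and $G_{0}(y)=c_{\mu,A}\,s^{\frac{\mu-2}{\mu-1}}$. This is a smooth, strictly convex function of $y$ near the curved boundary $\{s=0\}$, and its Hessian blows up like $s^{-\frac{\mu}{\mu-1}}$ in the $s$-direction.

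Next choose a bounded convex set $K$ whose boundary near $(0,L)$ coincides with the parabola $\{s=0\}$ for $|y_{1}|<2b\delta_{0}$, and which is smooth and uniformly convex elsewhere. Then define a convex function $G\colon K\to\R$ by $G=G_{0}$ on the neighbourhood $\{|y_{1}|<2b\delta_{0},\ 0\le s<s_{0}\}$ and $G=g(d_{\partial K})$ elsewhere, with $d_{\partial K}$ the distance to $\partial K$ and the same profile $g(t)=c\,t^{\frac{\mu-2}{\mu-1}}$. Glue the two pieces by a convex partition (for instance the maximum of the two pieces plus a large uniformly convex quadratic, then a regularized maximum), and set $G=+\infty$ off $K$. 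Put $F\coloneqq G^{*}$. Then $F$ has linear growth because $\mathrm{dom}(G)=K$ is bounded and contains a neighbourhood of $0$ (use Lemma \ref{lem:fenchel}: $F^{\infty}$ is the support function of $K$).

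To show $F=F_{0}$ on $\mathcal{C}_{R,\delta}$ for $\delta<\delta_{0}$ and $R=R(\delta)$ large, note that for $z\in\mathcal{C}_{R,\delta}$ the point $y=\nabla F_{0}(z)$ lies in the region where $G=G_{0}$ is smooth. By \eqref{eq:FenchelLegendre}, $z\in\partial G(y)$, so $F(z)=\langle z,y\rangle-G(y)=F_{0}(z)$; the sup is attained there since $G$ is convex and $z=\nabla G(y)$. Regularity and $\mu$-ellipticity of $F$ then follow from $\nabla^{2}F(z)=(\nabla^{2}G(y))^{-1}$ at $y=\nabla F(z)$. It therefore suffices to show that $G\in\hold^{2}(\mathrm{int}K)$, that $\nabla G\colon\mathrm{int}K\to\R^{2}$ is onto (true since $|\nabla g|\to\infty$ at $\partial K$), and to prove the two-sided bounds
\begin{align*}
\Lambda^{-1}(1+|\nabla G(y)|)|\xi|^{2}\le\langle\nabla^{2}G(y)\xi,\xi\rangle\le\lambda^{-1}(1+|\nabla G(y)|)^{\mu}|\xi|^{2}.
\end{align*}
For the profile $g(d)$ with $t=d$, we have $|\nabla G|\approx t^{-\frac{1}{\mu-1}}$. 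The normal second derivative is $\approx t^{-\frac{\mu}{\mu-1}}\approx|\nabla G|^{\mu}$, and the tangential one is $\approx g'(t)\kappa\approx|\nabla G|$, using the curvature $\kappa>0$ of $\partial K$. This matches exactly the structure of \eqref{eq:2ndderiv} and Lemma \ref{lem:muelliptic1}.

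The main obstacle is the gluing step: making $G$ $\hold^{2}$ and keeping both eigenvalue bounds uniform in the transition region near $\partial K$, where $G_{0}$ (built from the parabola) must be matched with the distance-profile function while $|\nabla G|\to\infty$. I would first try to make $\partial K$ exactly the parabola on a neighbourhood and choose $G=c\,\tilde s^{\frac{\mu-2}{\mu-1}}$ globally, with $\tilde s$ a single smooth concave defining function of $K$ equal to $s$ near $(0,L)$. This avoids gluing two singular functions: one only interpolates defining functions, and the Hessian bounds reduce to the computation above, together with the uniform concavity of $\tilde s$ and $|\nabla\tilde s|\simeq1$ near $\partial K$. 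Interior points, where $|\nabla G|$ is bounded, are handled by adding a small strictly convex quadratic.
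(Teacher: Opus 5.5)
Your final plan is the paper's construction. There $\tilde s=\rho(p)=h(L-p_{2})-\frac{p_{1}^{2}}{4b}$, $G_{0}=-C(A,\mu)\rho^{\alpha}$ on $K=\{\rho>0\}$, and $F=G_{0}^{*}$; note that the constant in front of $s^{\alpha}$ must therefore be negative. Your local coincidence argument (convexity of $G$ plus $\nabla G(y)=z$ gives $F(z)=\langle z,y\rangle-G(y)$) is correct. So is surjectivity of $\nabla G$ via $|\nabla G|\to\infty$ at $\partial K$. Both are a little leaner than Proposition~\ref{prop:coincidence} and Step~1 of Proposition~\ref{prop:Fmuell}.

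\textbf{The interior step fails.} Adding a strictly convex quadratic to $G$ changes $\nabla G$ near $(0,L)$. Then $G\neq F_{0}^{*}$ there, and $F=F_{0}$ on $\mathcal{C}_{R,\delta}$ is lost; a cut-off quadratic need not keep $G$ convex. Yet positive definiteness of $\nabla^{2}G$ on all of $K$ is indispensable: if $\nabla^{2}G(p)$ is singular, $\nabla F=(\nabla G)^{-1}$ cannot be differentiable at $z=\nabla G(p)$. It has to come from $\tilde s$ itself. Since $\nabla^{2}(-\tilde s^{\alpha})$ is the sum of the positive semidefinite matrices $\alpha(1-\alpha)\tilde s^{\alpha-2}\nabla\tilde s\otimes\nabla\tilde s$ and $-\alpha\tilde s^{\alpha-1}\nabla^{2}\tilde s$, it is positive definite at $p$ iff no $\xi\neq 0$ satisfies $\langle\nabla\tilde s(p),\xi\rangle=0=\langle\nabla^{2}\tilde s(p)\xi,\xi\rangle$. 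In other words, the level curves of $\tilde s$ must have nonvanishing curvature in $K$, and the maximum of $\tilde s$ must be nondegenerate. This is exactly what Lemma~\ref{lem:convexity1} checks, using $h''(s_{*})<0$ at the maximum point of $h$.

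\textbf{Two further corrections concern $\tilde s$ itself.} First, ``uniform concavity of $\tilde s$'' cannot hold, since $\tilde s=s$ near $(0,L)$ and $\nabla^{2}s=\mathrm{diag}(-\frac{1}{2b},0)$. Near $\partial K$ you only need $\langle\nabla^{2}\tilde s\,\tau,\tau\rangle\le -c<0$ for unit $\tau\perp\nabla\tilde s$ (the paper's $\mathrm{III}>0$ in \eqref{eq:carribeanqueen}), together with $-\nabla^{2}\tilde s\geq 0$, which makes the mixed terms harmless. Second, the existence of such a $\tilde s$ is the real content of the theorem, and interpolating defining functions with a cut-off need not preserve concavity. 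The missing device is the separated ansatz with a concave $h$ satisfying $h(t)=t$ for $t\le 2L$, $h>0$ exactly on $(0,S_{*})$, and $h''<0$ on $(2L,\infty)$. With it:
\begin{itemize}
\item $\rho$ is automatically concave;
\item $\rho=s$ on $\{p_{2}\geq -L\}$, so $\partial K$ is the parabola there and $\delta_{0}=\sqrt{L/b}$ works;
\item $K$ is bounded with $0\in K$, which you need for the lower linear growth bound;
\item $\nabla\rho\neq 0$ on $\partial K$.
\end{itemize}
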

The proof of the preceding theorem requires several preparations. Following the outline given in \S \ref{sec:outline}, we let $p=(p_{1},p_{2})\in\R^{2}$ and start by computing the Fenchel conjugate 
\begin{align*}
F_{0}^{*}(p)=\sup_{\substack{z_{1}\in\R \\ z_{2}>0}} z_{1}p_{1} + z_{2}p_{2} - Lz_{2}-Az_{2}^{2-\mu}-b\frac{z_{1}^{2}}{z_{2}} \eqqcolon \sup_{\substack{z_{1}\in\R \\ z_{2}>0}}f(z_{1},z_{2}).
\end{align*}
For fixed $z_{2}>0$, we first derive the extremality relation  
\begin{align*}
\partial_{z_{1}}f(z_{1},z_{2})=p_{1}-2b\frac{z_{1}}{z_{2}} \stackrel{!}{=} 0 \Longleftrightarrow z_{1}=\frac{p_{1}z_{2}}{2b}, 
\end{align*}
and it follows from $b>0$ that this indeed leads to a global maximum of $f(\cdot,z_{2})$. Hence, 
\begin{align*}
    F_{0}^{*}(p) & = \sup_{ z_{2}>0} \frac{p_{1}^{2}z_{2}}{2b} + z_{2}p_{2} - Lz_{2} - Az_{2}^{2-\mu}-\frac{p_{1}^{2}z_{2}}{4b} \\ 
    & =\sup_{z_{2}>0}-\underbrace{\Big(L-\frac{p_{1}^{2}}{4b}-p_{2}\Big)}_{\eqqcolon d} z_{2} - Az_{2}^{2-\mu} \\ 
    & = \sup_{z_{2}>0}-d z_{2}-Az_{2}^{2-\mu} \eqqcolon \sup_{z_{2}>0}f_{d}(z_{2}). 
\end{align*}
To compute the ultimate expression, we first suppose that $d=d(p)>0$. 
 The first and second derivatives of $f_{d}$ are given by
\begin{align*}
f'_{d}(z_{2})=-d-A(2-\mu)z_{2}^{1-\mu},\;\;\;f''_{d}(z_{2})=-A(2-\mu)(1-\mu)z_{2}^{-\mu}\stackrel{\mu>2,\,z_{2}>0}{<}0, 
\end{align*}
from where it particularly follows that $f_{d}$ is strictly concave. Because of $\mu>2$ and $d,A>0$, 
\begin{align*}
\lim_{z_{2}\searrow 0}f_{d}(z_{2})=\lim_{z_{2}\to\infty}f_{d}(z_{2})=-\infty, 
\end{align*}
and so the maximum of $f_{d}$ is attained at 
\begin{align*}
z_{2}=\Big(\frac{d}{A(\mu-2)}\Big)^{\frac{1}{1-\mu}}. 
\end{align*}
Therefore, we arrive at 
\begin{align*}
\sup_{z_{2}>0}f_{d}(z_{2})=-d\Big(\frac{d}{A(\mu-2)}\Big)^{\frac{1}{1-\mu}}-A\Big(\frac{d}{A(\mu-2)}\Big)^{\frac{2-\mu}{1-\mu}} = -C(A,\mu)d^{\frac{2-\mu}{1-\mu}}, 
\end{align*}
where 
\begin{align}\label{eq:hyperpower}
C(A,\mu)\coloneqq \frac{1}{(A(\mu-2))^{\frac{1}{1-\mu}}}+\frac{A^{\frac{1}{\mu-1}}}{(\mu-2)^{\frac{2-\mu}{1-\mu}}}.
\end{align}
Hence, if $d=d(p)>0$, then 
\begin{align*}
F_{0}^{*}(p)=-C(A,\mu)\Big(L-\frac{p_{1}^{2}}{4b}-p_{2}\Big)^{\frac{\mu-2}{\mu-1}}.
\end{align*}
Now suppose that $d=d(p)=0$. Then $f_{d}(z_{2})=-Az_{2}^{2-\mu}<0$ for all $z_{2}>0$, and sending $z_{2}\to\infty$ yields $F_{0}^{*}(p)=0$ in this case because of $\mu>2$. Lastly, if $d=d(p)<0$, we have 
\begin{align*}
\lim_{z_{2}\to\infty}-d z_{2} - Az_{2}^{2-\mu} \stackrel{\mu>2}{=}+\infty. 
\end{align*}
Hence, synthesizing the three cases, we end up with 
\begin{align}\label{eq:wildestdreams}
F_{0}^{*}(p)=\begin{cases} \displaystyle
-C(A,\mu)\Big(L-\frac{p_{1}^{2}}{4b}-p_{2}\Big)^{\frac{\mu-2}{\mu-1}}&\;\text{if}\;d(p)=L-\frac{p_{1}^{2}}{4b}-p_{2}>0, \\
0&\;\text{if}\; d(p)=L-\frac{p_{1}^{2}}{4b}-p_{2}=0, \\
+\infty &\;\text{if}\;d(p)=L-\frac{p_{1}^{2}}{4b}-p_{2}<0.
\end{cases}
\end{align}
To motivate our further construction, we single out the following remark: 
\begin{remark}\label{rem:extended}
We can directly extend $F_{0}\colon\mathbb{H}\to\R$ to a proper, convex and lower semicontinuous function $\widetilde{F}_{0}\colon\R^{2}\to\R\cup\{+\infty\}$ by  
\begin{align}\label{eq:extended}
\widetilde{F}_{0}(z) \coloneqq \begin{cases}
    F_{0}(z)\;\;\;\text{as in \eqref{eq:main}}&\;\text{if}\;z_{2}>0, \\ 
    +\infty &\;\text{if}\;z_{2}\leq 0. 
\end{cases}
\end{align}
Here, the lower semicontinuity follows from the continuity of $F_{0}|_{\mathbb{H}}$ and 
\begin{align*}
\lim_{z_{2}\searrow 0}F_{0}(z_{1},z_{2})=+\infty. 
\end{align*}
Then we clearly have $\widetilde{F}_{0}^{*}=F_{0}^{*}$ with $F_{0}^{*}$ as in \eqref{eq:wildestdreams}, and so the Fenchel-Moreau theorem (see Lemma \ref{lem:fenchelmoreau}) gives us $(F_{0}^{*})^{*}=\widetilde{F}_{0}$.
\end{remark}
By the previous remark, $(F_{0}^{*})^{*}$ does not have linear growth. To achieve the latter, let $h\in\hold^{\infty}(\R)$ be a function with the following properties: 
\begin{enumerate}[label=(\roman*)] \item\label{item:(i)}$h(s)=s$ for all $s\leq 2L$ and $h(s)\leq s$ for all $s\in\R$, 
\item\label{item:(ii)} $h(S_{*})=0$ for some $S_{*}>2L$, with $h(s)>0$ if and only if $s\in (0,S_{*})$, 
\item\label{item:(iii)} $h$ is concave on $\R$, satisfies $h''<0$ (and so is strictly concave) on $(2L,\infty)$, has its unique global maximum at $s_{*}\in (2L,S_{*})$ and is strictly decreasing on $(s_{*},\infty)$, 
\item\label{item:(iv)} and, as a consequence, $h'(0)=1$, $h'(S_{*})<0$, and $h(s)=0$ if and only if $s\in \{0,S_{*}\}$.
\end{enumerate}
\begin{remark}
The function $h$ can be constructed by elementary means. Define 
\begin{align*}
\eta(t)\coloneqq \begin{cases} 0&\;\text{if}\;t\leq 2L, \\ 
\exp\Big(-\frac{1}{(t-2L)^{2}}\Big)&\;\text{if}\;t> 2L, 
\end{cases}
\end{align*}
whereby $\eta\in\hold^{\infty}(\R)$ and put, for $S_{*}>2L$, 
\begin{align*}
\widetilde{c}\coloneqq \frac{S_{*}}{\int_{2L}^{S_{*}}(S_{*}-s)\eta(s)\,\,\dif s}. 
\end{align*}
Then the function $h\colon\R\to\R$ defined by 
\begin{align}\label{eq:hdef1A}
h(t)\coloneqq t-\widetilde{c}\int_{2L}^{t}(t-s)\eta(s)\,\,\dif s,\qquad t\in\R, 
\end{align}
satisfies \ref{item:(i)}--\ref{item:(iv)}. Clearly, $h$ is of class $\hold^{\infty}(\R)$. If $t\leq 2L$, $s\in (t,2L)$ implies that $\eta(s)=0$ and so $h(t)=t$. Moreover, the second summand in \eqref{eq:hdef1A} is non-negative for any $t\in\R$, whereby $h(t)\leq t$ holds for all $t\in\R$; this is \ref{item:(i)}. On the other hand, we compute 
\begin{align*}
h'(t)=1-\widetilde{c}\int_{2L}^{t}\eta(s)\,\,\dif s \;\;\;\text{and}\;\;\;h''(t)=-\widetilde{c}\eta(t),\qquad t\in\R. 
\end{align*}
In particular, $h$ is concave on $\R$ and satisfies $h''(t)<0$ as asserted in \ref{item:(iii)}. Moreover, the very definition of $\widetilde{c}$ gives us $h(S_{*})=0$. Now, we have $h'(2L)=1$, and the continuity of $\eta$ yields 
\begin{align*}
\int_{2L}^{S_{*}}(S_{*}-s)\eta(s)\,\,\dif s < S_{*}\int_{2L}^{S_{*}}\eta(s)\,\,\dif s \Longrightarrow 1<\widetilde{c}\int_{2L}^{S_{*}}\eta(s)\,\,\dif s \Longrightarrow h'(S_{*})<0. 
\end{align*}
By continuity of $h'$, there must exist a point $s_{*}\in (2L,S_{*})$ with $h'(s_{*})=0$, and because of $h''<0$ on $(2L,S_{*})$, this point is the unique maximum of $h$ on $(2L,S_{*})$. Since $h'(2L)>0$, this maximum is indeed the global maximum. From here, all of \ref{item:(i)}--\ref{item:(iv)} follow. 
\end{remark}

For future reference, the choice of $h$ we have in mind is depicted in Figure \ref{fig:functionh}. Based on \ref{item:(i)}--\ref{item:(iv)}, we introduce the set $K\subset\R^{2}$ via 
\begin{align}\label{eq:setK}
K \coloneqq \Big\{p\in\R^{2}\colon\;\rho(p)\coloneqq h(L-p_{2})-\frac{p_{1}^{2}}{4b}>0\Big\}
\end{align}
and pause to record some elementary properties of $K$: 
\begin{figure}
\begin{tikzpicture}
    \draw[->] (-0.2,0) -- (5,0);
    \draw[->] (0,-0.2) -- (0,2.5);
    \draw[-,blue,thick] (-0.2,-0.2) -- (2,2);
    \draw[-,blue,thick] (2,2) [out=45, in =120] to (4.7,-0.2);
    \draw[-] (2,0.1) -- (2,-0.1);
    \node[below]  at (2,0) {$2L$};
    \draw[-] (4.6,0.1) -- (4.6,-0.1);
    \node[below]  at (4.5,0) {$S_{*}$};
    \node[blue] at (4,1.5) {$h$};
    \draw[-] (2.5,0.1) -- (2.5,-0.1);
    \node[below]  at (2.5,-0.1) {$s_{*}$};
\end{tikzpicture}
\caption{The function $h$ with properties \ref{item:(i)}--\ref{item:(iv)}.}
\label{fig:functionh}
\end{figure}
\begin{lemma}\label{lem:Kprops}
The set $K$ from \eqref{eq:setK} has the following properties:
\begin{enumerate}
    \item\label{item:K1} $K$ is open and bounded, 
     \item\label{item:K2} $K$ is convex, and 
    \item\label{item:K3} $K$ has smooth boundary.  
\end{enumerate}
\end{lemma}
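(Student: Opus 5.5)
We plan to read everything off the structure $\rho(p)=h(L-p_{2})-\frac{p_{1}^{2}}{4b}$, which is a composition of the concave function $h$ with an affine map, minus a convex quadratic. We treat the three items in order; the only delicate point is in \ref{item:K3}.

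For \ref{item:K1}, openness is immediate, since $\rho$ is continuous ($h\in\hold^{\infty}$) and $K=\rho^{-1}((0,\infty))$. For boundedness, take $p\in K$. Then $h(L-p_{2})>\frac{p_{1}^{2}}{4b}\geq 0$, and by \ref{item:(ii)} this forces $L-p_{2}\in(0,S_{*})$, so $p_{2}\in(L-S_{*},L)$. Moreover $\frac{p_{1}^{2}}{4b}<h(L-p_{2})\leq h(s_{*})$ by \ref{item:(iii)}, which gives $|p_{1}|<2\sqrt{b\,h(s_{*})}$. Hence $K$ lies in a bounded rectangle.

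For \ref{item:K2}, the function $p\mapsto h(L-p_{2})$ is concave, as a concave function composed with an affine map. The function $-\frac{p_{1}^{2}}{4b}$ is concave as well, so $\rho$ is concave on $\R^{2}$. A strict superlevel set $\{\rho>0\}$ of a concave function is convex, which gives the claim.

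For \ref{item:K3}, we show that $0$ is a regular value of $\rho$ and then invoke the implicit function theorem. This gives $\partial K\subset\{\rho=0\}$ as a smooth one-dimensional submanifold, and we expect this step to be the main obstacle. We have
\begin{align*}
\nabla\rho(p)=\Big(-\frac{p_{1}}{2b},\,-h'(L-p_{2})\Big).
\end{align*}
Suppose $\rho(p)=0$ and $\nabla\rho(p)=0$. Then $p_{1}=0$, hence $h(L-p_{2})=0$, so by \ref{item:(iv)} we get $L-p_{2}\in\{0,S_{*}\}$. But there $h'(0)=1\neq0$ and $h'(S_{*})<0$, a contradiction. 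So $\nabla\rho\neq0$ on $\{\rho=0\}$.

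It remains to check that $\partial K=\{\rho=0\}$. Since $\rho$ is continuous, $\partial K\subset\{\rho\geq 0\}\setminus K=\{\rho=0\}$. Conversely, every point with $\rho=0$ has $\nabla\rho\neq0$, so $\rho$ takes positive values arbitrarily close to it, and the point lies in $\overline{K}$ but not in $K$. Alternatively, we can note that $K\neq\emptyset$ (e.g. $p=(0,L-s_{*})$ gives $\rho=h(s_{*})>0$) and that concavity of $\rho$ with nonvanishing gradient on the zero level yields the same conclusion. Consequently $\partial K$ is a compact smooth curve and $K$ has $\hold^{\infty}$ boundary.
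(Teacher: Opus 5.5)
Your proof is correct and follows the paper's argument essentially step by step: boundedness via \ref{item:(ii)} and the bound on $h$, convexity via concavity of $\rho$ (the paper checks this through the diagonal Hessian $\nabla^{2}\rho=\mathrm{diag}(-\frac{1}{2b},h''(L-p_{2}))$ rather than via a composition argument), and smoothness of $\partial K$ via the regular value theorem, after excluding $\nabla\rho=0$ on $\{\rho=0\}$ using \ref{item:(iv)}. Your additional verification that $\partial K=\rho^{-1}(\{0\})$ makes explicit a step the paper leaves implicit.
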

\begin{proof}
On \ref{item:K1}. First $K$ is bounded. Indeed, $\rho(p)>0$ implies that $h(L-p_{2})>0$ and so $0<L-p_{2}<S_{*}$ by \ref{item:(ii)}. Hence, $L-S_{*}<p_{2}<L$. Moreover, if $\rho(p)>0$, then 
\begin{align*}
0\leq \frac{p_{1}^{2}}{4b}<h(L-p_{2}) \Longrightarrow |p_{1}|\leq 2\sqrt{b}\sup_{s\in[0,S_{*}]}\sqrt{h(s)}, 
\end{align*}
and so the boundedness follows. The openness, in turn, is clear, and so \ref{item:K1} follows. 

For \ref{item:K2} and \ref{item:K3}, we note that 
\begin{align}\label{eq:comper}
\nabla\rho(p_{1},p_{2})=\left(\begin{matrix} -\frac{p_{1}}{2b} \\ -h'(L-p_{2})\end{matrix}\right)\;\;\;\text{and}\;\;\;\nabla^{2}\rho(p_{1},p_{2})=\left(\begin{matrix} -\frac{1}{2b} & 0 \\ 0 & h''(L-p_{2})\end{matrix}\right). 
\end{align}
Since $h''\leq 0$ by \ref{item:(iii)} and $b>0$, $\nabla^{2}\rho$ is negative semidefinite and so $\rho\colon\R^{2}\to\R$ is a concave function. In particular, its superlevel sets and so $K$ is convex, which is \ref{item:K2}.  

For \ref{item:K3}, it suffices to show $\nabla\rho|_{\partial K}\neq 0$. Suppose that $p\in\partial K$, whereby $\rho(p)=0$, is such that $\nabla\rho(p)=0$. By  $\eqref{eq:comper}_{1}$, $p_{1}=0$. Because of $\rho(p)=0$ and $p_{1}=0$, this implies $h(L-p_{2})=0$, and so $p_{2}=L$ or $L-p_{2}=S_{*}$ by \ref{item:(iv)}. By \ref{item:(iv)}, it also follows that $h'(L-p_{2})\neq 0$ in both cases. This is a contradiction to $\nabla\rho(p)=0$, and so $\nabla\rho|_{\partial K}\neq 0$ follows. In consequence, $\partial K$ is a $1$-dimensional smooth manifold by the regular value theorem. This yields \ref{item:K3}, and the proof is complete. 
\end{proof}
As a by-product of the previous proof, the continuity of $\rho$ and $\nabla\rho$ implies that there exist $0<\varepsilon_{0}\leq \Theta_{0}<\infty$ and  $\varepsilon'>0$ such that we have 
\begin{align}\label{eq:boundobound}
|\rho|\leq \frac{1}{2}\;\;\text{and}\;\;\varepsilon_{0}\leq |\nabla\rho|\leq\Theta_{0}\qquad\text{on}\;K_{0}\coloneqq \{p\in K\colon\;\mathrm{dist}(p,\partial K)<\varepsilon'\}. 
\end{align}
To alleviate notation, we put  $\alpha\coloneqq \frac{\mu-2}{\mu-1}\in (0,1)$ and define, with $C(A,\mu)$ as in \eqref{eq:hyperpower}, 
\begin{align}\label{eq:swallowtears}
G_{0}(p)\coloneqq\begin{cases} \displaystyle
-C(A,\mu)\rho(p)^{\alpha}&\;\text{if}\;p\in K, \\
0&\;\text{if}\; p\in\partial K, \\
+\infty &\;\text{if}\;p\in\R^{2}\setminus \overline{K}.
\end{cases}
\end{align}
In order to form the Fenchel conjugate of $G_{0}$, we record the following intermediate lemma. 
\begin{lemma}\label{lem:convexity1}
Let $G_{0}\colon\R^{2}\to \R\cup\{+\infty\}$ be as in \eqref{eq:swallowtears}. Then, for any $p\in K$, $\nabla^{2}G_{0}(p)$ is positive definite. As a consequence, $G_{0}|_{K}$ is strictly convex and $G_{0}$ is proper, lower semicontinuous and convex. 
\end{lemma}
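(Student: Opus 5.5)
Write $G_{0}=-C\,\phi\circ\rho$ on $K$ with $\phi(t)=t^{\alpha}$ and $C=C(A,\mu)>0$. Differentiating gives
\begin{align*}
\nabla^{2}G_{0}(p) = -C\alpha\rho^{\alpha-1}\nabla^{2}\rho(p) + C\alpha(1-\alpha)\rho^{\alpha-2}\nabla\rho(p)\otimes\nabla\rho(p),\qquad p\in K.
\end{align*}
The prefactors are positive, since $\rho>0$ on $K$ and $\alpha\in(0,1)$. The second term is positive semidefinite. The first term is positive semidefinite because, by \eqref{eq:comper}, $-\nabla^{2}\rho=\mathrm{diag}(\frac{1}{2b},-h''(L-p_{2}))$ and $h''\leq 0$.

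For definiteness I would argue as follows. Suppose $\langle\nabla^{2}G_{0}(p)\xi,\xi\rangle=0$. The first term forces $\frac{\xi_{1}^{2}}{2b}-h''(L-p_{2})\xi_{2}^{2}=0$, hence $\xi_{1}=0$. It also forces $h''(L-p_{2})\xi_{2}^{2}=0$. The second term then gives $h'(L-p_{2})\xi_{2}=0$.

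\begin{itemize}
\item If $L-p_{2}>2L$, then $h''<0$ by \ref{item:(iii)}, so $\xi_{2}=0$.
\item If $L-p_{2}\leq 2L$, then $h'=1$ there by \ref{item:(i)}, so again $\xi_{2}=0$.
\end{itemize}

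Thus $\nabla^{2}G_{0}(p)$ is positive definite on $K$. This case split is the only non-routine point: it is needed where $h''$ vanishes, namely on $(0,2L]$. Since $K$ is convex by Lemma \ref{lem:Kprops}, strict convexity of $G_{0}|_{K}$ follows by integrating the Hessian along segments.

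It remains to show that $G_{0}$ is proper, convex and lower semicontinuous. Properness is clear, since $G_{0}$ is finite on $K\neq\emptyset$; for instance, $(0,0)\in K$ because $h(L)=L>0$. For convexity on $\R^{2}$, it suffices to check convexity on $\overline{K}$, as $G_{0}=+\infty$ outside $\overline K$. Here $G_{0}=-C(\max\{\rho,0\})^{\alpha}$ on $\overline K$, since $\rho=0$ on $\partial K$ and $\rho>0$ on $K$. The function $\rho$ is concave, and $t\mapsto t^{\alpha}$ is concave and nondecreasing on $[0,\infty)$, so $\rho^{\alpha}$ is concave on $\overline K$. Hence $G_{0}$ is convex on $\overline K$.

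For lower semicontinuity, $G_{0}$ is continuous on $\overline{K}$, because $\rho^{\alpha}\to 0$ at $\partial K$. Since $\overline K$ is closed and $G_{0}=+\infty$ off it, each sublevel set $\{G_{0}\leq t\}$ is a closed subset of $\overline K$. Therefore $G_{0}$ is lower semicontinuous.
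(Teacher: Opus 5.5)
Your proof is correct and follows the paper's route. You split $\nabla^{2}G_{0}$ into the rank-one term $\nabla\rho\otimes\nabla\rho$ and the term $-\nabla^{2}\rho$, and definiteness comes from $h'$ and $h''$ never vanishing simultaneously on $(0,S_{*})$. Your case split ($L-p_{2}\leq 2L$ versus $L-p_{2}>2L$) streamlines the paper's case analysis over $p_{1}$, $h'(L-p_{2})$, $v_{1}$ and $v_{2}$. Your composition argument ($t\mapsto t^{\alpha}$ concave and nondecreasing after the concave $\rho$) together with the closed sublevel sets also makes the convexity and lower semicontinuity of the extended $G_{0}$ more explicit than the paper's brief limiting remark.
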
 
\begin{proof} 
Note that $\rho|_{K}$ itself is merely concave but not strictly concave; the latter property of $\rho^{\alpha}|_{K}$ is due to the power $\alpha\in (0,1)$. More precisely, on $K$, we have $\nabla \rho(p)^{\alpha}=\alpha\rho^{\alpha-1}\nabla\rho(p)$ and so 
\begin{align}\label{eq:miami}
\nabla^{2}\rho(p)^{\alpha}= \alpha(\alpha-1)\rho(p)^{\alpha-2}\nabla\rho(p)\otimes\nabla\rho(p) + \alpha\rho(p)^{\alpha-1}\nabla^{2}\rho(p), 
\end{align}
whereby we obtain for any $v=(v_{1},v_{2})\in\R^{2}\setminus\{0\}$ that 
\begin{align*}
\langle\nabla^{2}\rho(p)^{\alpha}v,v\rangle & = \alpha(\alpha-1)\rho(p)^{\alpha-2}|\langle \nabla\rho(p),v\rangle|^{2}+\alpha\rho(p)^{\alpha-1}\langle\nabla^{2}\rho(p)v,v\rangle \\ 
& \!\!\!\!\stackrel{\eqref{eq:comper}}{=} \alpha(\alpha-1)\rho(p)^{\alpha-2}\Big(\frac{v_{1}p_{1}}{2b}+h'(L-p_{2})v_{2}\Big)^{2} \\ 
& + \alpha\rho(p)^{\alpha-1} \Big(-\frac{1}{2b}v_{1}^{2} + h''(L-p_{2})v_{2}^{2} \Big) \eqqcolon \mathrm{I} + \mathrm{II}. 
\end{align*}
We distinguish two cases, and record in advance that $\mathrm{I},\mathrm{II}\leq 0$ because of $h''\leq 0$ by \ref{item:(iii)} and $\alpha<1$. First, suppose that $p_{1}=0$. Then $p\in K$ yields $h(L-p_{2})>0$ and so $0<L-p_{2}<S_{*}$ as noted in the preceding proof. If $h'(L-p_{2})\neq 0$ and $v_{2}\neq 0$, then $\alpha-1<0$ gives us $\mathrm{I}<0$, leading to $\mathrm{I}+\mathrm{II}<0$ because of $\mathrm{II}\leq 0$. Conversely, if $h'(L-p_{2})\neq 0$ and $v_{2}=0$, then still $v_{1}\neq 0$ and so we have $\mathrm{II}<0$ because of $h''\leq 0$ globally on $\R$. Since $\mathrm{I}\leq 0$, we arrive at $\mathrm{I}+\mathrm{II}<0$ in this case too. 

If $h'(L-p_{2})=0$, then $L-p_{2}=s_{*}$ and so $h''(L-p_{2})<0$ because of \ref{item:(iii)}, see also Figure \ref{fig:functionh}. Now, if $v_{1}=0$, then $v_{2}\neq 0$, and so we conclude $\mathrm{II}<0$ and therefore $\mathrm{I}+\mathrm{II}<0$ too. Lastly, if $v_{1}\neq 0$, then $\mathrm{II}<0$ because of $h''\leq 0$ and so $\mathrm{I}+\mathrm{II}<0$. 

Next, suppose that $p_{1}\neq 0$. If $v_{1}\neq 0$, we obtain $\mathrm{II}<0$ and so $\mathrm{I}+\mathrm{II}<0$. On the other hand, if $v_{1}=0$, then $v_{2}\neq 0$, and we again have two options: If $h'(L-p_{2})\neq 0$, we are done because of $\mathrm{I}<0$, and so $\mathrm{I}+\mathrm{II}<0$ in this case. Conversely, if $h'(L-p_{2})=0$, then $h''(L-p_{2})<0$ by \ref{item:(iii)} and so $\mathrm{II}<0$ together with $\mathrm{I}+\mathrm{II}<0$.

Summarizing, $\nabla^{2}\rho(p)^{\alpha}$ is negative definite regardless of $p\in K$, and so $\rho^{\alpha}$ is strictly concave. From here, it follows that $G_{0}|_{K}$ is strictly convex. Lastly, if $(p_{j})\subset K$ is a sequence with $p_{j}\to p\in\partial K$, then $\rho(p_{j})^{\alpha}\to 0$. As $G_{0}|_{K}\leq 0$, this implies the convexity of $G_{0}$, and the proof is complete. 
\end{proof}
Based on the preceding Lemma \ref{lem:convexity1}, we are allowed to form the Fenchel conjugate 
\begin{align}\label{eq:Ffinal}
\fbox{\text{$F\coloneqq G_{0}^{*}$,\;\;\;\text{that is},\;\;\;$F(z)=\sup_{p\in\overline{K}}z\cdot p - G_{0}(p)$,}} 
\end{align}
where we used that $G_{0}(p)=+\infty$ if $p\in\R^{2}\setminus\overline{K}$. We now proceed to show that $F$ is in fact $\mu$-elliptic and coincides with $F$ on certain cones $\mathcal{C}_{R,\delta}$. As a preparation, we single out the following two auxiliary results. 

\begin{figure}
\begin{tikzpicture}[scale=1]
\draw[->] (-4,0) -- (4,0); 
\draw[->] (0,-4) -- (0,2);
\draw[
    thick,
    color=blue!80!black,
    fill=blue!20!white, opacity=0.4
  ]
    plot[variable=\x, domain=1:-4, samples=200] 
      (
        { \x >= -1 ? 2*sqrt(1 - \x) : 2*sqrt(max(0, (1 - \x) - (5/9)*(-1 - \x)^2)) },
        { \x }
      )
    --
    plot[variable=\x, domain=-4:1, samples=200] 
      (
        { \x >= -1 ? -2*sqrt(1 - \x) : -2*sqrt(max(0, (1 - \x) - (5/9)*(-1 - \x)^2)) },
        { \x }
      )
    -- cycle;
    \draw[blue, thick, domain=-4:4, samples=100] 
        plot (\x, {1 - (\x)^2/4});
        \node[blue] at (4,-3.5) {$p_{2}=L-\frac{p_{1}^{2}}{4b}$};
        \draw[
    thick,
    color=blue!80!black,
    fill=blue!20!white, opacity=0.2, scale=0.8, yshift=-0.4cm
  ]
    plot[variable=\x, domain=1:-4, samples=200] 
      (
        { \x >= -1 ? 2*sqrt(1 - \x) : 2*sqrt(max(0, (1 - \x) - (5/9)*(-1 - \x)^2)) },
        { \x }
      )
    --
    plot[variable=\x, domain=-4:1, samples=200] 
      (
        { \x >= -1 ? -2*sqrt(1 - \x) : -2*sqrt(max(0, (1 - \x) - (5/9)*(-1 - \x)^2)) },
        { \x }
      )
    -- cycle;
    \node[blue!50!white] at (-2.5,-2) {\LARGE $K$};
    \node[blue!50!white] at (0,-2) {\LARGE $K\setminus K_{0}$};
    \node[blue!50!white] at (2.8,-2) {\Large $K_{0}$};
    \node[blue] at (0,1) {\textbullet};
     \node[above,blue] at (0.5,1) {$(0,L)$};
     \begin{scope}
    \clip (-1,0.745) [out=30, in = 180] to (0,1) [out=0, in =150] to (1,0.745) -- (1,-1) -- (-1,-1)--(-1,0.745);
    \draw[fill=green!60!black, opacity=0.3] (0,1) ellipse (0.5cm and 0.5cm);
    \node[green!50!black] at (0.25,0.5) {\LARGE $\mathfrak{L}$};
    \end{scope}
\end{tikzpicture}
\caption{The convex set $K$ (larger light blue set) and the set $K_{0}$ in the dual variable. The set $K_{0}$ is where large gradients are mapped to, and the dark green set $\mathfrak{L}$ schematically  corresponds to large gradients with dominating second component. Since we finally aim to place a jump in the $e_{2}$-direction (corresponding to the vertex $(0,L)$ in the stress  $\sigma=\nabla F(\nabla u)$) and use the concrete representation of $F$ via $F_{0}$ for the underlying singularity, we need to leave $F_{0}^{*}$ unchanged in $\mathfrak{L}$. The cut-off in the dual variable induced by $K$ finally leads to the linear growth of $F$, and it is then a key point of the proof that suitable choices of the cones from Figure \ref{fig:cone} are mapped to $\mathfrak{L}$.}
\label{fig:mod}
\end{figure}
\begin{lemma}\label{lem:diffeo}
Let $K\subset\R^{2}$ be open and bounded, and let $\Phi\colon\R^{2}\to K$ be continuous and bijective with continuous inverse. Then, for any $\varepsilon>0$, there exists a number $\mathtt{R}>\frac{1}{\varepsilon}$ such that 
\begin{align}
\sup_{|z|\geq\mathtt{R}}\mathrm{dist}(\Phi(z),\partial K)<\varepsilon. 
\end{align}
\end{lemma}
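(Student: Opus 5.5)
We argue by contradiction, using only the compactness of closed bounded sets and the fact that $\Phi^{-1}$ maps compact sets to compact sets. Fix $\varepsilon>0$ and consider
\begin{align*}
M_{\varepsilon}\coloneqq \{p\in K\colon\;\mathrm{dist}(p,\partial K)\geq \varepsilon\}.
\end{align*}
Since $K$ is bounded, $M_{\varepsilon}$ is bounded. It is also closed in $\R^{2}$. Indeed, if $p_{j}\in M_{\varepsilon}$ and $p_{j}\to p$, then $\mathrm{dist}(p,\partial K)\geq\varepsilon$ by continuity of the distance function, and $p\in\overline{K}$. Since $p\notin\partial K$, we get $p\in K$. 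Hence $M_{\varepsilon}$ is a compact subset of $K$, possibly empty.

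Next we use that $\Phi^{-1}\colon K\to\R^{2}$ is continuous. Then $\Phi^{-1}(M_{\varepsilon})$ is a compact subset of $\R^{2}$, so there is $\mathtt{R}_{0}\geq 0$ with $\Phi^{-1}(M_{\varepsilon})\subset\ball_{\mathtt{R}_{0}}(0)$. We choose
\begin{align*}
\mathtt{R}\coloneqq \max\{\mathtt{R}_{0},\tfrac{1}{\varepsilon}\}+1>\tfrac{1}{\varepsilon}.
\end{align*}
Now take $|z|\geq\mathtt{R}$. Then $z\notin\Phi^{-1}(M_{\varepsilon})$, and since $\Phi$ is a bijection onto $K$, this gives $\Phi(z)\in K\setminus M_{\varepsilon}$, that is, $\mathrm{dist}(\Phi(z),\partial K)<\varepsilon$.

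This pointwise bound does not yet give the strict inequality for the supremum, which could equal $\varepsilon$. The simplest fix is to run the argument with $\varepsilon/2$ in place of $\varepsilon$ and to take $\mathtt{R}>\frac{1}{\varepsilon}$ as well. Then every $|z|\geq\mathtt{R}$ satisfies $\mathrm{dist}(\Phi(z),\partial K)<\varepsilon/2$, so the supremum is at most $\varepsilon/2<\varepsilon$.

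There is no real obstacle here. The only care needed is to check that $M_{\varepsilon}$ is compact, which uses the openness and boundedness of $K$, and to use continuity of the inverse rather than of $\Phi$; bijectivity alone would not suffice. Continuity of $\Phi$ itself is not needed for this statement.
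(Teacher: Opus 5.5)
Your proof is correct and takes essentially the same approach as the paper. Both arguments use that $\{p\in K\colon\mathrm{dist}(p,\partial K)\geq\varepsilon/2\}$ is a compact subset of $K$, so its image under the continuous inverse $\Phi^{-1}$ is bounded; you phrase this directly, whereas the paper argues by contradiction with a sequence $|z_j|\to\infty$, and your $\varepsilon/2$ adjustment correctly secures the strict inequality for the supremum.
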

\begin{proof}
Suppose that the claim is false. Then there exist $\varepsilon>0$ and $j_{0}\in\mathbb{N}$ such that, for any $j\geq j_{0}$, there exists $z_{j}\in\R^{2}$ with $|z_{j}|\geq j$ and $\mathrm{dist}(\Phi(z_{j}),\partial K)>\frac{\varepsilon}{2}$. Defining $\mathcal{K}\coloneqq \{y\in K\colon\;\mathrm{dist}(y,\partial K)\geq \frac{\varepsilon}{2}\}$, $\mathcal{K}$ is compact and contained in $K$, and so is 
\begin{align*}
\mathcal{K}'\coloneqq \overline{\{\Phi(z_{j})\colon\;j\in\mathbb{N}\}}.
\end{align*}
In particular, since $\Phi^{-1}\colon K\to\R^{2}$ is continuous, $\Phi^{-1}(\mathcal{K}')$ is compact and, thus, is  bounded. This yields that $(z_{j})=(\Phi^{-1}(\Phi(z_{j})))$ is bounded, a contradiction. The proof is complete. 
\end{proof}
\begin{lemma}\label{lem:eigenvalues}
Let $\mathcal{K}\subset\R^{2}$ be compact, and let $H\colon \mathcal{K}\to\R_{\mathrm{sym}}^{2\times 2}$ be continuous. Then the maps 
\begin{align*}
\mathscr{H}^{-}\colon \mathcal{K}\ni x \mapsto \lambda_{H(x)}^{-}\in\R\;\;\;\text{and}\;\;\;\mathscr{H}^{+}\colon \mathcal{K}\ni x\mapsto \lambda_{H(x)}^{+}\in\R
\end{align*}
are continuous. In particular, if $H(x)$ is moreover positive definite for every $x\in\mathcal{K}$, then 
\begin{align*}
0<\min_{x\in\mathcal{K}}\lambda_{H(x)}^{-}\leq \max_{x\in\mathcal{K}}\lambda_{H(x)}^{+}<\infty.
\end{align*}
\end{lemma}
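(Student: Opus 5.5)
The proof splits into two parts. First, continuity of the eigenvalue maps follows from an explicit formula. Then positivity and finiteness of the extrema follow from compactness.

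\textbf{Continuity.} For a symmetric $2\times 2$ matrix $M$, the eigenvalues are the roots of $\lambda^{2}-\mathrm{Tr}(M)\lambda+\det(M)=0$. Hence
\begin{align*}
\lambda_{M}^{\pm}=\frac{\mathrm{Tr}(M)\pm\sqrt{\mathrm{Tr}(M)^{2}-4\det(M)}}{2}.
\end{align*}
The discriminant is nonnegative because, for $M=(m_{ij})$ symmetric,
\begin{align*}
\mathrm{Tr}(M)^{2}-4\det(M)=(m_{11}-m_{22})^{2}+4m_{12}^{2}\geq 0.
\end{align*}
Both $\mathrm{Tr}(M)$ and $\det(M)$ are polynomials in the entries of $M$, and $\sqrt{\cdot}$ is continuous on $[0,\infty)$. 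Therefore $M\mapsto\lambda_{M}^{\pm}$ is continuous on $\R_{\mathrm{sym}}^{2\times 2}$. Composing with the continuous map $H$ shows that $\mathscr{H}^{\pm}$ are continuous on $\mathcal{K}$.

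As an alternative that does not depend on the dimension, one can use the Rayleigh quotient characterisation
\begin{align*}
\lambda_{M}^{-}=\min_{|\xi|=1}\langle M\xi,\xi\rangle,\qquad \lambda_{M}^{+}=\max_{|\xi|=1}\langle M\xi,\xi\rangle.
\end{align*}
This gives the Lipschitz bound $|\lambda_{M}^{\pm}-\lambda_{N}^{\pm}|\leq\|M-N\|$, which again yields continuity.

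\textbf{Bounds on the extrema.} Assume now that $H(x)$ is positive definite for every $x\in\mathcal{K}$, so that $\lambda_{H(x)}^{-}>0$ pointwise. Since $\mathcal{K}$ is compact and $\mathscr{H}^{\pm}$ are continuous, the Weierstrass theorem shows that $\min_{\mathcal{K}}\mathscr{H}^{-}$ and $\max_{\mathcal{K}}\mathscr{H}^{+}$ are attained; we tacitly assume $\mathcal{K}\neq\emptyset$.

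The minimum is attained at some point $x_{0}\in\mathcal{K}$, so it equals $\lambda_{H(x_{0})}^{-}>0$. The maximum is a real number, hence finite. Finally, $\lambda^{-}_{H(x)}\leq\lambda^{+}_{H(x)}$ holds pointwise, which gives the middle inequality
\begin{align*}
\min_{x\in\mathcal{K}}\lambda_{H(x)}^{-}\leq \max_{x\in\mathcal{K}}\lambda_{H(x)}^{+}.
\end{align*}

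There is no genuine obstacle in this argument. The only point needing care is the continuity of the eigenvalues, and in dimension two the explicit formula above settles it.
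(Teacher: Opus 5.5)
Your proof is correct. The paper proves continuity by exactly the route you list as your alternative. From the Rayleigh characterization it derives $\langle H(x)v,v\rangle\geq\lambda^{-}_{H(y)}-\|H(x)-H(y)\|$, minimizes over $v\in\mathbb{S}^{1}$, and swaps $x$ and $y$ to obtain $|\lambda^{-}_{H(x)}-\lambda^{-}_{H(y)}|\leq\|H(x)-H(y)\|$, and likewise for $\lambda^{+}$. It leaves the ``in particular'' part to the reader.

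Your primary route uses the closed-form expression $\lambda^{\pm}_{M}=\frac{1}{2}\big(\mathrm{Tr}(M)\pm\sqrt{(m_{11}-m_{22})^{2}+4m_{12}^{2}}\big)$. It is more hands-on and fully adequate here, since only $2\times 2$ matrices occur, but it is tied to dimension two. The Rayleigh argument works verbatim for symmetric $n\times n$ matrices and gives a quantitative $1$-Lipschitz bound for free, although only continuity is used later.

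Your Weierstrass step also supplies the detail the paper omits. Strict positivity of $\min_{\mathcal{K}}\lambda^{-}_{H(\cdot)}$ requires the minimum to be attained at a point of $\mathcal{K}$, since an infimum of positive numbers could be zero. Compactness together with your continuity result guarantees exactly this.
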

\begin{proof}
By the Rayleigh characterization of eigenvalues, we have for all $x,y\in\mathcal{K}$ and all $v\in\mathbb{S}^{1}$ that 
\begin{align*}
\langle H(x)v,v\rangle = \langle (H(x)-H(y))v,v\rangle + \langle H(y)v,v\rangle \geq - \|H(x)-H(y)\| + \lambda_{H(y)}^{-}, 
\end{align*}
and so minimizing the overall inequality over $v\in\mathbb{S}^{1}$ gives us 
\begin{align}
\lambda_{H(y)}^{-}-\lambda_{H(x)}^{-}\leq \|H(x)-H(y)\|. 
\end{align}
Interchanging the roles of $x$ and $y$, we obtain 
\begin{align}
\lambda_{H(x)}^{-}-\lambda_{H(y)}^{-}\leq \|H(x)-H(y)\|\;\;\;\text{and so}\;\;\; |\lambda_{H(x)}^{-}-\lambda_{H(y)}^{-}|\leq \|H(x)-H(y)\|. 
\end{align}
Hence, $\mathscr{H}^{-}$ is continuous. The argument for $\mathscr{H}^{+}$ is analogous, and so the proof is complete. 
\end{proof}
We now have: 
\begin{proposition}[$\hold^{2}$-regularity and $\mu$-ellipticity of $F$]\label{prop:Fmuell}
Let $\mu>2$. The function $F$ from \eqref{eq:Ffinal} is of class $\hold^{2}(\R^{2})$, is of linear growth and is \emph{globally $\mu$-elliptic}.  
\end{proposition}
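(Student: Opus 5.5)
We would prove this by Legendre duality, with $G_{0}$ from \eqref{eq:swallowtears}. For linear growth: since $\mathrm{dom}(G_{0})=\overline{K}$ is bounded by Lemma~\ref{lem:Kprops} and $G_{0}$ is bounded on $\overline{K}$ (it takes values in $[-C(A,\mu)\max\rho^{\alpha},0]$), \eqref{eq:Ffinal} gives
\[
F(z)\leq \sup_{p\in\overline{K}}|p|\,|z| + C(A,\mu)\max_{\overline{K}}\rho^{\alpha}.
\]
Choosing a small ball $\ball_{r}(p_{0})\Subset K$ and testing the supremum with $p=p_{0}+r z/|z|$ gives $F(z)\geq r|z|-c$. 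So $F$ is finite, convex and of linear growth, and by Lemma~\ref{lem:fenchelmoreau}, $F^{*}=G_{0}$.

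For $\hold^{2}$-regularity we show that $\nabla G_{0}\colon K\to\R^{2}$ is a $\hold^{1}$-diffeomorphism onto $\R^{2}$. By Lemma~\ref{lem:convexity1}, $\nabla^{2}G_{0}>0$ on $K$, so $\nabla G_{0}$ is a local diffeomorphism and, by strict convexity, injective. For surjectivity, given $z$ the function $p\mapsto G_{0}(p)-z\cdot p$ is lower semicontinuous on the compact set $\overline{K}$ and attains its minimum. The minimum cannot lie on $\partial K$. Indeed, by \eqref{eq:boundobound}, near $\partial K$ we have $|\nabla G_{0}|=C\alpha\rho^{\alpha-1}|\nabla\rho|\geq c\rho^{\alpha-1}\to\infty$, and $\nabla G_{0}$ points along $-\nabla\rho$, the outer normal direction. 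Moving inward from a boundary point along $\nabla\rho$ therefore decreases $G_{0}-z\cdot p$ at rate $\rho^{\alpha-1}\to\infty$, which beats the linear term. Hence the minimizer lies in $K$ and satisfies $\nabla G_{0}(p)=z$. By \eqref{eq:FenchelLegendre}, $\nabla F=(\nabla G_{0})^{-1}=:\Phi\colon\R^{2}\to K$, which is a homeomorphism. The inverse function theorem gives $F\in\hold^{2}$ with $\nabla^{2}F(z)=(\nabla^{2}G_{0}(\Phi(z)))^{-1}$.

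The main step is the $\mu$-ellipticity, i.e.\ two-sided eigenvalue bounds for $(\nabla^{2}G_{0})^{-1}$ at $p=\Phi(z)$. On bounded sets $|z|\leq\mathtt{R}$ this is Lemma~\ref{lem:eigenvalues} applied to $\nabla^{2}F$ on a compact ball, since $\nabla^{2}F$ is positive definite there. For $|z|\geq\mathtt{R}$, Lemma~\ref{lem:diffeo} puts $\Phi(z)\in K_{0}$. There, by \eqref{eq:miami},
\[
\nabla^{2}G_{0}=C\alpha\rho^{\alpha-2}\Big((1-\alpha)\nabla\rho\otimes\nabla\rho-\rho\nabla^{2}\rho\Big).
\]
In the normal direction $\nu=\nabla\rho/|\nabla\rho|$ the eigenvalue is of size $\rho^{\alpha-2}$. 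In the tangential direction $\tau$ it is $-C\alpha\rho^{\alpha-1}\langle\nabla^{2}\rho\,\tau,\tau\rangle$, which is of size $\rho^{\alpha-1}$ provided $\partial K$ has uniformly positive curvature, i.e.\ $\langle-\nabla^{2}\rho\,\tau,\tau\rangle\geq c>0$ on $K_{0}$. We expect this to hold because, if $\tau$ is not horizontal, the term $\tfrac{1}{2b}\tau_{1}^{2}$ contributes. When $\tau$ is horizontal we have $p_{1}=0$, which on $\partial K$ means $L-p_{2}\in\{0,S_{*}\}$; at $S_{*}$, $h''<0$ by \ref{item:(iii)}, and at $(0,L)$ the tangent is horizontal and $\tfrac{1}{2b}$ helps. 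After shrinking $\varepsilon'$, compactness then gives uniform curvature. The off-diagonal contributions are of lower order, so a $2\times2$ determinant/trace computation yields
\[
\lambda^{-}(\nabla^{2}G_{0})\sim\rho^{\alpha-1},\qquad \lambda^{+}(\nabla^{2}G_{0})\sim\rho^{\alpha-2}.
\]
Next relate $\rho$ to $|z|$. Since $|z|=|\nabla G_{0}(p)|\sim\rho^{\alpha-1}$, we get $\rho\sim|z|^{-1/(1-\alpha)}=|z|^{-(\mu-1)}$, using $1-\alpha=\tfrac{1}{\mu-1}$. Hence the eigenvalues of $\nabla^{2}F$ lie between $\rho^{2-\alpha}\sim|z|^{-(\mu-1)(2-\alpha)}=|z|^{-\mu}$ and $\rho^{1-\alpha}\sim|z|^{-1}$, because $(\mu-1)(2-\alpha)=\mu$ and $(\mu-1)(1-\alpha)=1$. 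This is exactly \eqref{eq:muell}.

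The delicate points we expect are the uniform tangential curvature bound near the vertex $(0,L)$ and near $L-p_{2}=S_{*}$, and keeping constants uniform on $K_{0}$ via \eqref{eq:boundobound}.
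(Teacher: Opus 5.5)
Your overall route is the paper's: you conjugate $G_{0}$, show the extremizer is interior, identify $\nabla F=\Phi=(\nabla G_{0})^{-1}$ and $\nabla^{2}F=(\nabla^{2}G_{0}\circ\Phi)^{-1}$, work in the frame $\{\nu_{p},\tau_{p}\}$, obtain $\lambda^{+}\sim\rho^{\alpha-2}$ and $\lambda^{-}\sim\rho^{\alpha-1}$, and convert via $|z|=C(A,\mu)\alpha\rho^{\alpha-1}|\nabla\rho|$. The gap is in the step you yourself call the crux, the tangential bound $\langle-\nabla^{2}\rho\,\tau,\tau\rangle\geq c>0$ near $\partial K$.

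\textbf{The curvature step.} One has $\langle-\nabla^{2}\rho\,\tau,\tau\rangle=\frac{1}{2b}\tau_{1}^{2}-h''(L-p_{2})\tau_{2}^{2}$. So the term $\frac{1}{2b}\tau_{1}^{2}$ helps whenever $\tau$ is \emph{not vertical}, not whenever it is ``not horizontal''.
\begin{itemize}
\item The horizontal tangents ($\tau_{2}=0$, i.e.\ $p_{1}=0$) are the harmless case. These are the vertex $(0,L)$ and the bottom point $(0,L-S_{*})$, where the quantity simply equals $\frac{1}{2b}$; $h''(S_{*})$ plays no role there.
\item The case your analysis never treats is the vertical tangent $\tau_{1}=0$. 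Since $\tau_{1}=h'(L-p_{2})/|\nabla\rho|$, this happens exactly when $L-p_{2}=s_{*}$, at the two lateral extreme points $p_{1}=\pm2\sqrt{b\,h(s_{*})}$ of $K$.
\item At these points only the second term survives, and positivity needs $h''(s_{*})<0$. This is exactly what property (iii) supplies.
\end{itemize}
This is the paper's argument for $\mathrm{III}>0$. On $\partial K$ one has $p_{1}^{2}/(4b)=h(L-p_{2})$, so the numerator of $\mathrm{III}(p)$ equals $\frac{1}{b}\big(\frac{1}{2}h'^{2}-h''h\big)$, and both terms are non-negative. If it vanished, then $h'(L-p_{2})=0$, hence $L-p_{2}=s_{*}$, contradicting $h''(s_{*})<0<h(s_{*})$.

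This is not cosmetic. If $h''(s_{*})$ were $0$, then $\mathscr{B}_{\tau\tau}=o(\rho^{\alpha-1})$ near the lateral points, so $\lambda^{-}_{\nabla^{2}G_{0}}\leq\mathscr{B}_{\tau\tau}$ would degenerate there. The upper bound $\lambda^{+}_{\nabla^{2}F(z)}\lesssim(1+|z|)^{-1}$ would then fail along the corresponding $z$. Your list of ``delicate points'' should therefore be the points with $L-p_{2}=s_{*}$, not the vertex and $S_{*}$.

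\textbf{A smaller slip in the coercivity bound.} Testing with $p=p_{0}+rz/|z|$ gives only $F(z)\geq z\cdot p_{0}+r|z|$. The term $z\cdot p_{0}$ is unbounded below unless $p_{0}=0$. Taking $p_{0}=0$ is legitimate, because $\rho(0)=h(L)=L>0$, so $0\in K$; this then gives $F(z)\geq r|z|$. Once fixed, this is a useful addition, since the paper only records the upper growth bound.

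\textbf{What is correct.} The following are fine and match the paper in substance:
\begin{itemize}
\item the interior-minimizer argument;
\item the inverse function theorem step;
\item the determinant/trace computation, with off-diagonal entries of order $\rho^{\alpha-1}$;
\item the exponent bookkeeping $(\mu-1)(2-\alpha)=\mu$ and $(\mu-1)(1-\alpha)=1$;
\item the compactness argument on $\{|z|\leq\mathtt{R}\}$.
\end{itemize}
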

\begin{proof}
The linear growth of $F$ directly follows from \eqref{eq:Ffinal} and the fact that $K$ is bounded. For the remaining assertions, we split the proof into three steps. 

\emph{Step 1.} Our first objective, to be executed in Steps 1 and 2, is the $\hold^{2}$-regularity of $F$. 
To this end, let $z\in\R^{2}$ be arbitrary and define  $f_{0}^{z}\colon \overline{K}\ni p \mapsto z\cdot p -G_{0}(p)$. By continuity of $G_{0}|_{\overline{K}}$  and since $\overline{K}$ is compact by Lemma \ref{lem:Kprops}\ref{item:K1}, $f_{0}^{z}$ has a maximizer $p_{z}$ on $\overline{K}$. We claim that any such  maximizer necessarily belongs to $K$. To this end, we argue by contradiction and suppose that $p_{z}\in\partial K$. Since $\partial K$ is smooth by Lemma \ref{lem:Kprops}\ref{item:K3}, the outer normal $\nu_{\partial K}(p_{z})$ to $\partial K$ at $p_{z}$ is well-defined. For a sufficiently small $t_{0}>0$, we consider the line segment 
\begin{align*}
l \coloneqq \{\mathbf{p}_{z}(t)\coloneqq p_{z}-t\nu_{\partial K}(p_{z})\colon\;0\leq t\leq t_{0}\} =\Big\{\mathbf{p}_{z}(t)\coloneqq p_{z}+t\frac{\nabla\rho(p_{z})}{|\nabla \rho(p_{z})|}\colon\;0\leq t\leq t_{0} \Big\}, 
\end{align*}
which we may assume to be contained in $K$ except for the point $p_{z}$. Now, since $p_{z}\in\partial K$, $G_{0}(p_{z})=0$ and therefore 
\begin{align*}
f_{0}^{z}(\mathbf{p}_{z}(t)) & = z\cdot \mathbf{p}_{z}(t) - G_{0}(\mathbf{p}_{z}(t)) \\ 
& = z \cdot \Big(p_{z} + t\frac{\nabla\rho(p_{z})}{|\nabla\rho(p_{z})|} \Big) + C(A,\mu)\rho\Big(p_{z} + t\frac{\nabla\rho(p_{z})}{|\nabla\rho(p_{z})|} \Big)^{\alpha} \\ 
& = z\cdot p_{z} - G_{0}(p_{z}) \\ 
& + tz\cdot \frac{\nabla\rho(p_{z})}{|\nabla\rho(p_{z})|} + C(A,\mu)\rho\Big(p_{z} + t\frac{\nabla\rho(p_{z})}{|\nabla\rho(p_{z})|} \Big)^{\alpha} \\ 
& = (\max_{p\in \overline{K}}f_{0}^{z}(p)) + \Big(tz\cdot \frac{\nabla\rho(p_{z})}{|\nabla\rho(p_{z})|} + C(A,\mu)\rho\Big(p_{z} + t\frac{\nabla\rho(p_{z})}{|\nabla\rho(p_{z})|} \Big)^{\alpha}\Big) \\ 
& \eqqcolon (\max_{p\in \overline{K}}f_{0}^{z}(p)) + \gamma(t). 
\end{align*}
Hence, we shall arrive at a contradiction provided we can show $\gamma(t)>0$ for some $0<t<t_{0}$. Since $\rho$ is differentiable at $p_{z}$ and $\rho(p_{z})=0$, there exists a function $\mathrm{r}$ with $\lim_{t\searrow 0}|\frac{\mathrm{r}(t)}{t}|=0$ such that 
\begin{align*}
\rho(p_{z}-t\nu_{\partial K}(p_{z})) = \rho(p_{z}) +(\nabla\rho(p_{z}))\cdot(-t\nu_{\partial K}(p_{z}))+\mathrm{r}(t) = t|\nabla\rho(p_{z})| + \mathrm{r}(t)
\end{align*}
for all $0<t<t_{0}$. Diminishing $t_{0}$ if required, we may assume that $|\mathrm{r}(t)|\leq \frac{\varepsilon_{0}}{2}t$ for all $0<t<t_{0}$, where $\varepsilon_{0}>0$ is as in \eqref{eq:boundobound}. For such values of $t$, we have 
\begin{align*}
\rho(p_{z}-t\nu_{\partial K}(p_{z}))\geq \varepsilon_{0} t - \frac{\varepsilon_{0}}{2}t = \frac{\varepsilon_{0}}{2}t
\end{align*}
and therefore 
\begin{align*}
\gamma(t) & \geq t^{\alpha}\Big(t^{1-\alpha}z\cdot\frac{\nabla \rho(p_{z})}{|\nabla\rho(p_{z})|} + C(A,\mu)\Big(\frac{\varepsilon_{0}}{2}\Big)^{\alpha}\Big)\qquad\text{for all}\;0<t<t_{0}. 
\end{align*}
Because of $0<\alpha<1$ and $\lim_{t\searrow 0}t^{1-\alpha}=0$, the term inside the brackets will be positive 
for some sufficiently small $0<t<t_{0}$. For this choice of $t$, we particularly have $\gamma(t)>0$, and so arrive at the contradictory estimate $f_{0}^{z}(\mathbf{p}_{z}(t))> \max_{p\in\overline{K}}f_{0}^{z}(p)$. From now on, we may thus assume that any maximizer of $f_{0}^{z}$ belongs to $K$. 

\emph{Step 2. $\hold^{2}$-regularity of $F$.} We now draw some elementary conclusions that are well known facts in convex analysis (see, e.g., \cite{Rockafellar1970}); for the sake of exposition, we provide the details. As a first consequence of Step 1, the maximizer $p_{z}$ of $f_{0}^{z}$ is unique. Indeed, $p_{z}$ belongs to the open set $K$ where, by Lemma \ref{lem:convexity1}, $f_{0}^{z}$ is strictly concave. This particularly entails that the map 
\begin{align*}
\Phi\colon \R^{2}\ni z \mapsto p_{z}\in K 
\end{align*}
is well-defined and 
\begin{align}\label{eq:weizen}
\nabla f_{0}^{z}(\Phi(z))=0,\;\;\;\text{that is},\;\;\;z=(\nabla G_{0})(\Phi(z)).
\end{align}
From \eqref{eq:weizen}, we deduce that $\nabla G_{0}\colon K\to \R^{2}$ is surjective. By openness and convexity of $K$, and since $G_{0}|_{K}\in\hold^{2}(K)$ is strictly convex with $\nabla^{2}G_{0}$ being positive definite everywhere in $K$, $\nabla G_{0}\colon K\to\R^{2}$ is injective and $\det(\nabla^{2}G_{0})\neq 0$ globally on $K$. Thus $\nabla G_{0}\colon K\to\R^{2}$ is bijective, of class $\hold^{1}$ and, by the inverse function theorem, $(\nabla G_{0})^{-1}\colon \R^{2}\to K$ is of class $\hold^{1}$ too. Going back to \eqref{eq:weizen}, we thus obtain that 
\begin{align}\label{eq:PhiReg}
\Phi =(\nabla G_{0})^{-1}\colon \R^{2} \to K\qquad\text{is of class $\hold^{1}$ too}. 
\end{align}
Hence, we have for $z\in\R^{2}$ that 
\begin{align*}
F(z)=f_{0}^{z}(\Phi(z))=z\cdot\Phi(z)-G_{0}(\Phi(z)), 
\end{align*}
from where it follows that $F\in\hold^{1}(\R^{2})$. This allows us to compute  
\begin{align}\label{eq:simplifier1}
\nabla F(z) & = \nabla(f_{0}^{z}(\Phi(z))) =\nabla \Big(z\cdot \Phi(z)-G_{0}(\Phi(z))\Big) \\ & = \Phi(z) + (\nabla \Phi(z))^{\top}z-(\nabla\Phi(z))^{\top}(\nabla G_{0})(\Phi(z)) \stackrel{\eqref{eq:weizen}}{=} \Phi(z)
\end{align}
for all $z\in\R^{2}$. Due to this identity and \eqref{eq:PhiReg}, we infer that $F\in\hold^{2}(\R^{2})$. 

\emph{Step 3. $\mu$-ellipticity of $F$.} Let $z\in\R^{2}$. Then  \eqref{eq:weizen} and \eqref{eq:simplifier1} combine to 
\begin{align}
z=\nabla G_{0}(\Phi(z))=(\nabla G_{0})(\nabla F(z))\;\;\;\text{and so}\;\;\;\mathbbm{1}_{2\times 2}= (\nabla^{2}G_{0})(\nabla F(z))\nabla^{2}F(z).
\end{align}
We then arrive at the key identity 
\begin{align}\label{eq:keyidentity}
\nabla^{2}F(z)=((\nabla^{2}G_{0})(\Phi(z)))^{-1}\qquad\text{for all}\;z\in\R^{2}.
\end{align}
Based on \eqref{eq:keyidentity}, we now proceed to show that $F$ is $\mu$-elliptic.  For sufficiently small $\ell>0$, let $\Sigma_{\ell}\coloneqq \rho^{-1}(\{\ell\})\Subset K_{0}$. The definition of $K_{0}$ then yields that $\Sigma_{\ell}$ is a smooth one-dimensional manifold. Let $p\in\Sigma_{\ell}$. By slight abuse of notation, we now put  
\begin{align}\label{eq:basis}
\nu_{p}\coloneqq \frac{\nabla\rho(p)}{|\nabla\rho(p)|},
\end{align}
so that $\nu_{p}$ is normal to $\Sigma_{\ell}$, and define a corresponding tangential vector via
\begin{align}\label{eq:basis1}
\tau_{p}\coloneqq \frac{1}{|\nabla\rho(p)|}\left(\begin{matrix} -\partial_{2}\rho(p) \\ \partial_{1}\rho(p) \end{matrix} \right).
\end{align}
Now, $\{\nu_{p},\tau_{p}\}$ forms an orthonormal basis of $\R^{2}$, and so the representing matrix of 
\begin{align*}
\nabla^{2}G_{0}(p) & \stackrel{\eqref{eq:miami}}{=} -C(A,\mu)\alpha(\alpha-1)\rho(p)^{\alpha-2}\nabla\rho(p)\otimes\nabla\rho(p) - C(A,\mu)\alpha\rho(p)^{\alpha-1}\nabla^{2}\rho(p) \\ & \;\;\eqqcolon \mathscr{A}(p)+\mathscr{B}(p)
\end{align*}
with respect to the basis specified in \eqref{eq:basis} and \eqref{eq:basis1} is given by 
\begin{align}\label{eq:type0negative}
\begin{split}
\mathfrak{A}(p)& \coloneqq \left(\begin{matrix} \langle\nabla^{2}G_{0}(p)\nu_{p},\nu_{p}\rangle & \langle\nabla^{2}G_{0}(p)\nu_{p},\tau_{p}\rangle \\ \langle\nabla^{2}G_{0}(p)\nu_{p},\tau_{p}\rangle & \langle\nabla^{2}G_{0}(p)\tau_{p},\tau_{p}\rangle \end{matrix} \right) \\ 
& = \left(\begin{matrix} C(A,\mu)\alpha(1-\alpha)\rho(p)^{\alpha-2}|\nabla\rho(p)|^{2} & 0 \\0 & 0\end{matrix} \right) + \left(\begin{matrix} \langle\mathscr{B}(p)\nu_{p},\nu_{p} \rangle& \langle \mathscr{B}(p)\nu_{p},\tau_{p}\rangle\\ \langle\mathscr{B}(p)\nu_{p},\tau_{p}\rangle & \langle\mathscr{B}(p)\tau_{p},\tau_{p}\rangle\end{matrix} \right) \\ 
& \eqqcolon \left(\begin{matrix} C(A,\mu)\alpha(1-\alpha)\rho(p)^{\alpha-2}|\nabla\rho(p)|^{2} & 0 \\0 & 0\end{matrix} \right) + \left(\begin{matrix} \mathscr{B}_{\nu\nu}(p) & \mathscr{B}_{\nu\tau}(p)\\ \mathscr{B}_{\tau\nu}(p) & \mathscr{B}_{\tau\tau}(p)\end{matrix} \right).
\end{split}
\end{align}
By Lemma \ref{lem:convexity1}, $\mathfrak{A}(p)\in\R^{2\times 2}$ is a positive definite symmetric matrix. Based on the notation specified in \S\ref{sec:notation}, the Rayleigh characterization of eigenvalues yields
\begin{align}\label{eq:ellipto1}
\begin{split}
\lambda_{\mathfrak{A}(p)}^{+} & =\max_{\substack{\xi\in\R^{2} \\ |\xi|=1}}\langle\mathfrak{A}(p)\xi,\xi\rangle \geq \langle\mathfrak{A}(p)e_{1}, e_{1}\rangle\\ 
& = C(A,\mu)\alpha(1-\alpha)\rho(p)^{\alpha-2}|\nabla\rho(p)|^{2} + \mathscr{B}_{\nu\nu}(p) \\ 
& \!\!\!\!\! \stackrel{\eqref{eq:comper}_{2}}{\geq} C(A,\mu)\alpha(1-\alpha)\rho(p)^{\alpha-2} |\nabla\rho(p)|^{2} \\ & + \underbrace{C(A,\mu)\alpha\frac{\rho(p)^{\alpha-1}}{|\nabla\rho(p)|^{2}}\Big(\frac{1}{2b}(\partial_{1}\rho(p))^{2} -h''(L-p_{2})(\partial_{2}\rho(p))^{2} \Big)}_{\geq 0\,\text{by concavity of $h$}} \\ 
& \!\!\!\stackrel{\eqref{eq:boundobound}}{\geq} C(A,\mu)\alpha(1-\alpha)\varepsilon_{0}^{2}\rho(p)^{\alpha-2}. 
\end{split}
\end{align}
For the corresponding upper bound, we record that 
\begin{align}\label{eq:twotribes}
\begin{split}
|\mathscr{B}(p)| & \stackrel{\eqref{eq:comper}_{2}}{\leq}  C(A,\mu)\alpha\rho(p)^{\alpha-1}\Big(\frac{1}{4b^{2}}+\sup_{s\in[0,S_{*}]}|h''(s)|^{2}\Big)^{\frac{1}{2}} \\ 
&  \;\stackrel{\eqref{eq:boundobound}}{\leq} C(A,\mu)\alpha\rho(p)^{\alpha-2}\Big(\frac{1}{4b^{2}}+\sup_{s\in[0,S_{*}]}|h''(s)|^{2}\Big)^{\frac{1}{2}} \eqqcolon C'\rho(p)^{\alpha-2}, 
\end{split}
\end{align}
where $C'=C'(A,b,\mu,h)>0$ is a constant. On the other hand, we have 
\begin{align}\label{eq:bettedavieseyes}
\begin{split}
\lambda_{\mathfrak{A}(p)}^{+} & \leq \lambda_{\mathfrak{A}(p)}^{+} + \lambda_{\mathfrak{A}(p)}^{-} =\mathrm{Tr}(\mathfrak{A}(p)) \\ & \!\!\!\!\stackrel{\eqref{eq:type0negative}}{\leq}  C(A,\mu)\alpha(1-\alpha)\rho(p)^{\alpha-2}|\nabla\rho(p)|^{2} + \mathscr{B}_{\nu\nu}(p)+\mathscr{B}_{\tau\tau}(p) \\ 
& \!\!\!\!\!\!\!\!\!\!\!\stackrel{\eqref{eq:boundobound},\,\eqref{eq:twotribes}}{\leq} C''\rho(p)^{\alpha-2}, 
\end{split}
\end{align}
where again $C''=C''(A,b,\mu,h,\varepsilon_{0},\Theta_{0})>0$ is a constant. Combining \eqref{eq:ellipto1} and \eqref{eq:bettedavieseyes}, there exist positive constants $c_{1},c_{2}>0$, both depending on $A,b,\mu,h,\varepsilon_{0},\Theta_{0}$, such that 
\begin{align}\label{eq:uppereigenvalue}
c_{1}\rho(p)^{\alpha-2}\leq \lambda_{\mathfrak{A}(p)}^{+}\leq c_{2}\rho(p)^{\alpha-2} \qquad\text{for all}\;p\in K_{0}. 
\end{align}
Next, we bound $\det(\mathfrak{A}(p))$ from above and below. We have 
\begin{align}\label{eq:detlefjoseph}
\begin{split}
\det(\mathfrak{A}(p)) & = \big(C(A,\mu)\alpha(1-\alpha)\rho(p)^{\alpha-2}|\nabla\rho(p)|^{2}+\mathscr{B}_{\nu\nu}(p)\Big)\mathscr{B}_{\tau\tau}(p) - \mathscr{B}_{\nu\tau}(p)^{2} \\ 
& = C(A,\mu)\alpha(1-\alpha)\rho(p)^{\alpha-2}|\nabla\rho(p)|^{2}\mathscr{B}_{\tau\tau}(p) + \det(\mathscr{B}(p)) \\ 
& \geq C(A,\mu)\alpha(1-\alpha)\rho(p)^{\alpha-2}|\nabla\rho(p)|^{2}\mathscr{B}_{\tau\tau}(p), 
\end{split}
\end{align}
where we used that $\det(\mathscr{B}(p))\geq 0$, see  $\eqref{eq:comper}_{2}$ and recall that $h$ is concave. To proceed further, we require an explicit representation of $\mathscr{B}_{\tau\tau}(p)$: 
\begin{align}\label{eq:carribeanqueen}
\begin{split}
\mathscr{B}_{\tau\tau}(p) & \stackrel{\eqref{eq:comper}}{=} - \frac{C(A,\mu)\alpha\rho(p)^{\alpha-1}}{|\nabla\rho(p)|^{2}}\left(\left(\begin{matrix} -\frac{1}{2b} & 0 \\ 0 & h''(L-p_{2})\end{matrix} \right)\left(\begin{matrix} -\partial_{2}\rho(p) \\ \partial_{1}\rho(p)\end{matrix}\right) \right)\cdot\left(\begin{matrix} -\partial_{2}\rho(p) \\ \partial_{1}\rho(p)\end{matrix} \right) \\ 
& =  - {C(A,\mu)\alpha\rho(p)^{\alpha-1}}\frac{\Big(-\frac{1}{2b}(\partial_{2}\rho(p))^{2}+h''(L-p_{2})(\partial_{1}\rho(p))^{2} \Big)}{|\nabla\rho(p)|^{2}} \\ 
& =   {C(A,\mu)\alpha\rho(p)^{\alpha-1}}\frac{\frac{1}{2b}(h'(L-p_{2}))^{2}-\frac{1}{b}h''(L-p_{2})(\frac{p_{1}^{2}}{4b}) }{|\nabla\rho(p)|^{2}}\\ 
& \eqqcolon  {C(A,\mu)\alpha\rho(p)^{\alpha-1}} \mathrm{III}(p).
\end{split}
\end{align}
We firstly examine the term $\mathrm{III}$ at $\partial K$, that is, where $\rho(p)=0$ and so $h(L-p_{2})=\frac{p_{1}^{2}}{4b}$. Based on the latter relation, we may write 
\begin{align*}
  \mathrm{III}(p) & = \frac{1}{b}  \frac{\frac{1}{2}(h'(L-p_{2}))^{2}-h''(L-p_{2})h(L-p_{2})}{|\nabla\rho(p)|^{2}}. 
\end{align*}
By \eqref{eq:boundobound}, it suffices to give a lower bound on the enumerator. Since $h(L-p_{2})=\frac{p_{1}^{2}}{4b}\geq 0$, the concavity of $h$ yields that both summands in the enumerator are non-negative. Now suppose that $p\in\partial K$ is such that the enumerator is zero. Then $h'(L-p_{2})=0$ and so $L-p_{2}=s_{*}$, but then $h''(s_{*})<0$ and $h(s_{*})>0$. Hence, the enumerator cannot be zero on $\partial K$. By compactness of $\partial K$, continuity of $\partial K\ni p\mapsto \mathrm{III}(p)$ and recalling \eqref{eq:boundobound}, we thus have
\begin{align*}
    \mathtt{c}\coloneqq \min_{p\in\partial K}\mathrm{III}(p)>0.
\end{align*}
By continuity of $p\mapsto\mathrm{III}(p)$, there exists an open set $K_{1}\subset K_{0}$ with $\partial K= \partial K_{1}\cap \partial K$ such that 
\begin{align}
\mathrm{III}(p)\geq \frac{\mathtt{c}}{2}\qquad\text{for all}\; p\in K_{1}.
\end{align}
Going back to \eqref{eq:carribeanqueen}, it thus follows that 
\begin{align*}
\mathscr{B}_{\tau\tau}(p)\geq \frac{\alpha \mathtt{c}C(A,\mu)}{2}\rho(p)^{\alpha-1}\qquad\text{for all}\;p\in K_{1}
\end{align*}
and hence 
\begin{align}\label{eq:mutualattraction}
\det(\mathfrak{A}(p))\stackrel{\eqref{eq:detlefjoseph}}{\geq} \frac{\mathtt{c}\varepsilon_{0}^{2}}{2}C(A,\mu)^{2}\alpha^{2}(1-\alpha)\rho(p)^{2\alpha-3}\geq \mathtt{C}(A,\alpha,\mu,\varepsilon_{0},K_{1})\rho(p)^{2\alpha-3}, 
\end{align}
where $\mathtt{C}(A,\alpha,\mu,\varepsilon_{0},K_{1})>0$ is a constant. Clearly, because of $K_{1}\subset K_{0}$, \eqref{eq:uppereigenvalue} holds true on $K_{1}$. In conclusion, 
\begin{align}\label{eq:smallerinho}
    \lambda_{\mathfrak{A}(p)}^{-} = \frac{\det(\mathfrak{A}(p))}{\lambda_{\mathfrak{A}(p)}^{+}} 
    \stackrel{\eqref{eq:uppereigenvalue},\,\eqref{eq:mutualattraction}}{\geq} \frac{\mathtt{C}(A,\alpha,\mu,\varepsilon_{0},K_{1})\rho(p)^{2\alpha-3}}{c_{2}\rho(p)^{\alpha-2}} \eqqcolon c_{0}\rho(p)^{\alpha-1}
\end{align}
holds for all $p\in K_{1}$. Hence, returning to \eqref{eq:uppereigenvalue}, \eqref{eq:smallerinho} gives us
\begin{align}\label{eq:fernando}
    c_{0}\rho(p)^{\alpha-1}\leq \lambda_{\mathfrak{A}(p)}^{-} \leq \lambda_{\mathfrak{A}(p)}^{+} \leq c_{2}\rho(p)^{\alpha-2}\qquad\text{for all}\;p\in K_{1}.
\end{align}
We are now ready to conclude the proof. Namely, we let $\varepsilon>0$ be so small such that 
\begin{align*}
K^{\varepsilon}\coloneqq \{p\in K\colon\;\mathrm{dist}(p,\partial K)<\varepsilon\}\subset K_{1}. 
\end{align*}
By what we have established between \eqref{eq:weizen} and \eqref{eq:PhiReg}, $\Phi\colon\R^{2}\to K$ is a $\hold^{1}$-diffeomorphism. In this situation, Lemma \ref{lem:diffeo} provides us with some $\mathtt{R}>1$ such that 
\begin{align}\label{eq:miamivice}
\sup_{|z|\geq \mathtt{R}}\mathrm{dist}(\Phi(z),\partial K)<\varepsilon. 
\end{align}
In particular, if $|z|\geq \mathtt{R}$, then $\Phi(z)\in K^{\varepsilon}\subset K_{1}$. Hence, if $|z|\geq\mathtt{R}$, then $\Phi(z)\in K_{1}(\subset K_{0})$, and we have 
\begin{align}\label{eq:tony}
\begin{split}
|z| & \stackrel{\eqref{eq:weizen}}{=} |(\nabla G_{0})(\Phi(z))| =C(A,\mu)\alpha|\rho(\Phi(z))|^{\alpha-1}|\nabla\rho(\Phi(z))| \\ & \!\!\!\!\!\!\!\!\!\!\!\stackrel{\Phi(z)\in K_{1},\,\eqref{eq:boundobound}}{\leq} C(A,\mu)\Theta_{0}\alpha|\rho(\Phi(z))|^{\alpha-1}.
\end{split}
\end{align}
Based on \eqref{eq:keyidentity} and the fact that eigenvalues are independent of the particular choice of bases, we thus find for any such $z$ that 
\begin{align}\label{eq:upperinho}
\begin{split}
\lambda_{\nabla^{2}F(z)}^{+} & \stackrel{\eqref{eq:keyidentity}}{=}  \big(\lambda_{(\nabla^{2}G_{0}(\Phi(z)))}^{-}\big)^{-1} = \big(\lambda_{\mathfrak{A}(\Phi(z))}^{-}\big)^{-1} \\ & \stackrel{\eqref{eq:fernando}}{\leq} \frac{1}{c_{0}}\rho(\Phi(z))^{1-\alpha} \stackrel{\eqref{eq:tony}}{=} \frac{C(A,\mu)\Theta_{0}\alpha}{c_{3}}\frac{1}{|z|} \\ & \stackrel{|z|\geq 1}{\leq} \frac{2C(A,\mu)\Theta_{0}\alpha}{c_{3}}\frac{1}{1+|z|} \eqqcolon \widetilde{\Lambda}(1+|z|)^{-1}.
\end{split}
\end{align}
For the reverse estimate, we use \eqref{eq:boundobound} and $K^{\varepsilon}\subset K_{1}\subset K_{0}$ to replace \eqref{eq:tony} by 
\begin{align}\label{eq:tony1}
|z| \geq C(A,\mu)\alpha\varepsilon_{0}|\rho(\Phi(z))|^{\alpha-1}. 
\end{align}
Then, using that 
\begin{align*}
\frac{2-\alpha}{1-\alpha}=\frac{2-\frac{\mu-2}{\mu-1}}{1-\frac{\mu-2}{\mu-1}}=\frac{2\mu-2-\mu+2}{\mu-1-\mu+2} = \mu, 
\end{align*}
we conclude that 
\begin{align*}
\rho(\Phi(z))^{2-\alpha} & = \Big(\rho(\Phi(z))^{1-\alpha}\Big)^{\frac{2-\alpha}{1-\alpha}}  \stackrel{\eqref{eq:tony1}}{\geq} \Big(C(A,\mu)\alpha\varepsilon_{0}|z|^{-1} \Big)^{\mu} \geq  (C(A,\mu)\alpha\varepsilon_{0})^{\mu}(1+|z|)^{-\mu} 
\end{align*}
for all $|z|\geq \mathtt{R}$. In consequence, we find similarly as in \eqref{eq:upperinho} that 
\begin{align}\label{eq:donkey}
\begin{split}
    \lambda_{\nabla^{2}F(z)}^{-}  = \big(\lambda_{\mathfrak{A}(\Phi(z))}^{+}\big)^{-1} & \stackrel{\eqref{eq:fernando}}{\geq} \frac{1}{c_{2}}\rho(\Phi(z))^{2-\alpha} \\ 
    & \;\;\geq \frac{(C(A,\mu)\alpha\varepsilon_{0})^{\mu}}{c_{2}}(1+|z|)^{-\mu} \eqqcolon \widetilde{\lambda}(1+|z|)^{-\mu}. 
    \end{split}
\end{align}
To conclude the proof, we note that $\Phi\colon\R^{2}\to K$ is a $\hold^{1}$-diffeomorphism and thus maps the compact set $\{|z|\leq\mathtt{R}\}$ to a compact set $K''\subset K$. Hence, if $|z|\leq \mathtt{R}$, then  
\begin{align}\label{eq:donkey1}
\lambda_{\nabla^{2}F(z)}^{-} = \Big(\lambda_{\nabla^{2}G_{0}(\Phi(z))}^{+}\Big)^{-1} \geq \Big(\sup_{p\in K''}\lambda_{\nabla^{2}G_{0}(p)}^{+}\Big)^{-1} \geq c(K'')>0
\end{align}
and 
\begin{align}\label{eq:donkey2}
\lambda_{\nabla^{2}F(z)}^{+} = \Big(\lambda_{\nabla^{2}G_{0}(\Phi(z))}^{-}\Big)^{-1} \leq \Big(\inf_{p\in K''}\lambda_{\nabla^{2}G_{0}(p)}^{-}\Big)^{-1} \leq c'(K'')<\infty
\end{align}
by Lemmas \ref{lem:convexity1} and  \ref{lem:eigenvalues}. Hence, \eqref{eq:upperinho}, \eqref{eq:donkey},  \eqref{eq:donkey1} and \eqref{eq:donkey2}  imply that $F$ is $\mu$-elliptic on the entire $\R^{2}$. The proof is complete. 
\end{proof}
We finish this section by showing that $F$ coincides with $F_{0}$ on suitably chosen cones. 
\begin{proposition}\label{prop:coincidence}
Let $\mu>2$. There exists $\delta_{0}>0$ such that, for any $0<\delta<\delta_{0}$, there exists $R=R(\delta)>1$ such that the following hold: 
\begin{enumerate}
\item\label{item:coinc1} For $z\in\mathcal{C}_{R,\delta}$, we have $\nabla F_{0}(z)\in K$.
\item\label{item:coinc2} We have $F=F_{0}$  in $\mathcal{C}_{R,\delta}$.
\end{enumerate}
Here, $\mathcal{C}_{R,\delta}$ is as in \eqref{eq:cones}. 
\end{proposition}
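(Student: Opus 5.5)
The plan is to exploit that $K$ and $\{d>0\}$ coincide near the vertex $(0,L)$, and then use Fenchel duality to transfer the coincidence of the conjugates to the primal side.

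\emph{Step 1: where $K$ and $\{d>0\}$ agree.} By \ref{item:(i)}, $h(s)=s$ for $s\le 2L$, so for $p$ with $p_{2}\geq -L$ we get
\[
\rho(p)=L-p_{2}-\tfrac{p_{1}^{2}}{4b}=d(p).
\]
Hence on the region $\mathfrak{L}\coloneqq\{p\colon\, p_{2}>-L\}$ we have $K\cap\mathfrak{L}=\{d>0\}\cap\mathfrak{L}$ and $G_{0}=F_{0}^{*}$ there, by \eqref{eq:wildestdreams} and \eqref{eq:swallowtears}.

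\emph{Step 2: item \ref{item:coinc1}.} Let $z\in\mathcal{C}_{R,\delta}$. By \eqref{eq:firstvariation}, $\nabla F_{0}(z)=(p_{1},p_{2})$ with
\[
|p_{1}|=\frac{2b|z_{1}|}{z_{2}}<2b\delta,\qquad
p_{2}=L-(\mu-2)Az_{2}^{1-\mu}-\frac{bz_{1}^{2}}{z_{2}^{2}} .
\]
This gives $p_{2}\ge L-(\mu-2)AR^{1-\mu}-b\delta^{2}>-L$ once $\delta<\delta_{0}$ with $b\delta_{0}^{2}<L$ and $R=R(\delta)$ is large. Moreover
\[
d(\nabla F_{0}(z))=(\mu-2)Az_{2}^{1-\mu}+\frac{bz_{1}^{2}}{z_{2}^{2}}-\frac{b z_{1}^{2}}{z_{2}^{2}}=(\mu-2)Az_{2}^{1-\mu}>0.
\]
Combined with Step 1, this shows $\nabla F_{0}(z)\in K\cap\mathfrak{L}$.

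\emph{Step 3: item \ref{item:coinc2}.} Set $p=\nabla F_{0}(z)$. Fenchel--Young equality for the convex lsc function $\widetilde F_{0}$ (Remark \ref{rem:extended}, \eqref{eq:FenchelLegendre}) gives
\[
F_{0}(z)=z\cdot p-F_{0}^{*}(p)=z\cdot p-G_{0}(p),
\]
using Step 1. So $F(z)\ge F_{0}(z)$ by \eqref{eq:Ffinal}. For equality it suffices that $p$ is a critical point of the concave function $q\mapsto z\cdot q-G_{0}(q)$ on $K$, since by Step 1 of Proposition \ref{prop:Fmuell} the maximizer lies in $K$, is unique, and satisfies $z=\nabla G_{0}(\Phi(z))$. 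Near $p$, which lies in the open set $K\cap\mathfrak{L}$, $G_{0}$ equals $F_{0}^{*}$, and $F_{0}^{*}$ is differentiable there. The relation $z\in\partial F_{0}^{*}(p)$ from \eqref{eq:FenchelLegendre} then yields $\nabla G_{0}(p)=\nabla F_{0}^{*}(p)=z$. By the injectivity of $\nabla G_{0}$ we get $\Phi(z)=p$, and hence
\[
F(z)=z\cdot\Phi(z)-G_{0}(\Phi(z))=F_{0}(z).
\]

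\emph{Main obstacle.} The delicate point is making sure $\nabla F_{0}(z)$ lands in the open region where $h$ is the identity, uniformly over the cone; this is exactly what forces $\delta_{0}$ to be small and $R$ to depend on $\delta$. A secondary point is justifying the differentiability of $F_{0}^{*}$ at interior points of $\{d>0\}$, which follows from the explicit formula \eqref{eq:wildestdreams}.
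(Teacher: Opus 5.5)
Your proof is correct. Item \ref{item:coinc1} and the lower bound $F(z)\geq F_{0}(z)$ follow the paper's argument. You work on $\{p_{2}>-L\}$ rather than the paper's $\{p_{2}>0\}$, which is immaterial.

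The real difference is in the upper bound $F\leq F_{0}$ on $\mathcal{C}_{R,\delta}$.

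\emph{The paper's route.} It argues globally. Property \ref{item:(i)} of $h$ gives $F_{0}^{*}\leq G_{0}$ on all of $\R^{2}$. Then, for every $q\in\overline{K}$, Fenchel's inequality for $\widetilde{F}_{0}$ yields $z\cdot q-G_{0}(q)\leq z\cdot q-F_{0}^{*}(q)\leq\widetilde{F}_{0}(z)$. Taking the supremum gives $F=G_{0}^{*}\leq F_{0}$ by order reversal.

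\emph{Your route.} You argue locally. You use that $G_{0}=F_{0}^{*}$ on an open neighbourhood of $p=\nabla F_{0}(z)$, and that $z\in\partial F_{0}^{*}(p)$ by \eqref{eq:FenchelLegendre}. Together these give $\nabla G_{0}(p)=z$, and hence $p=\Phi(z)$ by the uniqueness and injectivity results from Proposition \ref{prop:Fmuell}.

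\emph{What each buys.} Your argument only uses that $G_{0}$ is convex and coincides with $F_{0}^{*}$ near $p$. It also gives $\nabla F(z)=\Phi(z)=\nabla F_{0}(z)$ directly, which is the fact used later in \eqref{eq:container}. The appeal to Proposition \ref{prop:Fmuell} can even be dropped. A convex function that is differentiable at an interior point $p$ of its domain with gradient $z$ satisfies $z\in\partial G_{0}(p)$. The equality case of Fenchel--Young then gives $G_{0}^{*}(z)=z\cdot p-G_{0}(p)$ at once. The paper's route, in turn, is purely order-theoretic and needs no differentiability or interior-maximizer considerations at all.
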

\begin{proof}
Before we embark on the actual proof, we recall that 
\begin{align}\label{eq:inc}
K\subset \Big\{p\in\R^{2}\colon\;L-\frac{p_{1}^{2}}{4b}-p_{2}>0\Big\}, 
\end{align}
which follows directly from 
\begin{align*}
p\in K & \Longrightarrow h(L-p_{2})-\frac{p_{1}^{2}}{4b}>0 \stackrel{\text{\ref{item:(i)}}}{\Longrightarrow} L-p_{2}-\frac{p_{1}^{2}}{4b} >0.
\end{align*}
Towards the claim of the proposition, we put 
\begin{align*}
    \delta_{0}\coloneqq \Big(\frac{L}{b}\Big)^{\frac{1}{2}}
\end{align*}
and let $0<\delta<\delta_{0}$ be arbitrary. Because of $\mu>2$, we may subsequently choose $R=R(\delta)>0$ so large such that 
\begin{align}\label{eq:michaeljackson}
    A(\mu-2)R^{1-\mu}+b\delta^{2}\stackrel{!}{<}L<2L. 
\end{align}
For this choice of $R$, let $z\in\mathcal{C}_{R,\delta}$. We abbreviate $p=\nabla F_{0}(z)$, whereby \eqref{eq:firstvariation} yields 
\begin{align*}
L-p_{2} \stackrel{\eqref{eq:firstvariation}}{=}A(\mu-2)z_{2}^{1-\mu} + b\frac{z_{1}^{2}} {z_{2}^{2}} \stackrel{z\in\mathcal{C}_{R,\delta}}{\leq} A(\mu-2)R^{1-\mu} + b\delta^{2} \stackrel{\eqref{eq:michaeljackson}}{<}L
\end{align*}
because of $\mu>2$, and 
\begin{align*}
p_{2}= L-A(\mu-2)z_{2}^{1-\mu}-b\frac{z_{1}^{2}}{z_{2}^{2}} >L-A(\mu-2)R^{1-\mu}-b\delta^{2}\stackrel{\eqref{eq:michaeljackson}}{>}0.
\end{align*}
By \ref{item:(i)} in the construction of $h$, see also Figure \ref{fig:functionh}, we thus have $h(L-p_{2})=L-p_{2}$. In particular, 
\begin{align}\label{eq:debrahmorgan}
\begin{split}
\rho(p)& =h(L-p_{2})-\frac{p_{1}^{2}}{4b}=L-p_{2}-\frac{p_{1}^{2}}{4b} 
\\ 
 & = A(\mu-2)z_{2}^{1-\mu} + b\frac{z_{1}^{2}}{z_{2}^{2}} -b\frac{z_{1}^{2}}{z_{2}^{2}} =(\mu-2)Az_{2}^{1-\mu}\stackrel{z\in\mathcal{C}_{R,\delta},\,\mu>2}{>}0. 
 \end{split}
\end{align}
Hence, $p=\nabla F_{0}(z)\in K$, which is \ref{item:coinc1}. This allows us to conclude that 
\begin{align}\label{eq:onlythelonely}
\begin{split}
\widetilde{F}_{0}^{*}(p)  & \stackrel{\text{Rem. \ref{rem:extended}}}{=} F_{0}^{*}(p) \stackrel{\eqref{eq:wildestdreams},\,p\in K,\,\eqref{eq:inc}}{=} -C(A,\mu)\Big(L-\frac{p_{1}^{2}}{4b}-p_{2}\Big)^{\alpha} \\ & \;\,\stackrel{\eqref{eq:debrahmorgan}_{1}}{=} -C(A,\mu)\rho(p)^{\alpha} \stackrel{\eqref{eq:swallowtears},\,p\in K}{=} G_{0}(p).
\end{split}
\end{align}
From here, the assertion of the proposition essentially follows from routine duality arguments; we give the full details. With the extended real-valued integrand $\widetilde{F}_{0}$ from \eqref{eq:extended}, we have  
\begin{align}\label{eq:moon1}
\begin{split}
F(z) & \stackrel{\eqref{eq:Ffinal}}{=} \sup_{p\in \overline{K}}z\cdot p - G_{0}(p) \stackrel{\nabla F_{0}(z)\in K}{\geq} z\cdot\nabla F_{0}(z) - G_{0}(\nabla F_{0}(z))  \\ & \stackrel{\eqref{eq:onlythelonely}}{=} z\cdot\nabla F_{0}(z) - \widetilde{F}_{0}^{*}(\nabla F_{0}(z)) \stackrel{(*)_{1}}{=}  \widetilde{F}_{0}(z) \stackrel{(*)_{2}}{=}  F_{0}(z). 
\end{split}
\end{align}
Here, $(*)_{2}$ is a direct consequence of $z_{2}>0$ and \eqref{eq:extended}.  
To see $(*)_{1}$ note that $\{p\}=\{\nabla F_{0}(z)\}\subset K\subset\mathrm{dom}(\widetilde{F}_{0}^{*})$ by \ref{item:coinc1}. Moreover, $z\in\mathrm{dom}(\widetilde{F}_{0})$, and so the Fenchel--Legendre identity \eqref{eq:FenchelLegendre} gives 
\begin{align*}
z\cdot p =\widetilde{F}_{0}(z)+\widetilde{F}_{0}^{*}(p), 
\end{align*}
which is $(*)_{1}$; in particular, the involved quantities are finite. 

For the other direction, we claim that $F_{0}^{*}=\widetilde{F}_{0}^{*}\leq G_{0}$.  Assuming this for the time being, the proof is concluded by the order-reversing property of Fenchel conjugates. More precisely, we estimate for $q\in \overline{K}$ by use of Fenchel's inequality: 
\begin{align*}
z\cdot q -G_{0}(q) \leq z\cdot q - F_{0}^{*}(q) = z\cdot q - \widetilde{F}_{0}^{*}(q)\leq \widetilde{F}_{0}(z) = F_{0}(z). 
\end{align*}
Passing to the supremum over all $q\in\overline{K}$, we find 
\begin{align}\label{eq:moon2}
    F(z) \stackrel{\eqref{eq:Ffinal}}{=}  G_{0}^{*}(z) \leq F_{0}(z). 
\end{align}
Hence, \eqref{eq:moon1} and \eqref{eq:moon2} combine to $F(z)=F_{0}(z)$ for $z\in\mathcal{C}_{R,\delta}$. It thus remains to prove the inequality $\widetilde{F}_{0}^{*}\leq G_{0}$. By Remark \ref{rem:extended}, this is equivalent to $F_{0}^{*}\leq G_{0}$. 

On $\partial K$, we have $F_{0}^{*}(p)\leq 0 =G_{0}(p)$, and $F_{0}^{*}(p)\leq \infty =G_{0}(p)$ trivially holds on $\R^{2}\setminus\overline{K}$. Lastly, on $K$, we conclude similarly as above that  
\begin{align*}
h(L-p_{2})\stackrel{\text{\ref{item:(i)}}}{\leq}L-p_{2} & \Longrightarrow L-\frac{p_{1}^{2}}{4b}-p_{2} \geq h(L-p_{2})-\frac{p_{1}^{2}}{4b} \\ 
& \!\!\!\stackrel{0<\alpha<1}{\Longrightarrow} \Big(L-\frac{p_{1}^{2}}{4b}-p_{2} \Big)^{\alpha} \geq \rho(p)^{\alpha} \\ 
& \Longrightarrow F_{0}^{*}(p)\stackrel{\eqref{eq:wildestdreams},\,\eqref{eq:inc}}{=} -C(A,\mu)\Big(L-\frac{p_{1}^{2}}{4b}-p_{2} \Big)^{\alpha}  \\ & \;\;\;\;\;\;\;\;\;\;\;\;\;\;\;\;\;\;\;\;\;\;\;\;\leq -C(A,\mu)\rho(p)^{\alpha} \stackrel{\eqref{eq:swallowtears}}{=}G_{0}(p).
\end{align*}
This establishes $F_{0}^{*}\leq G_{0}$ globally on $\R^{2}$, and the proof of \ref{item:coinc2} is complete. 
\end{proof}
We are now ready to summarize the preceding results to give the: 
\begin{proof}[Proof of Theorem \ref{thm:muellipticextension}]
By Proposition \ref{prop:Fmuell}, $F\in\hold^{2}(\R^{2})$ is $\mu$-elliptic and of linear growth. Now, Proposition \ref{prop:coincidence} asserts that there exists $\delta_{0}>0$ such that, for any $0<\delta<\delta_{0}$, there exists $R=R(\delta)>0$ with $F=F_{0}$ on $\mathcal{C}_{R,\delta}$. The proof is complete. 
\end{proof}
 \section{Construction of a singular minimizer}\label{sec:singularity}
 We now proceed to the proof of Theorem \ref{thm:main}. Throughout, let $\mu>3$ and define 
 \begin{align}\label{eq:randkappa}
r\coloneqq \frac{\mu-3}{\mu-1}\in (0,1)\;\;\;\text{and}\;\;\;\kappa \coloneqq \frac{A(\mu-2)}{b}r^{1-\mu}.
 \end{align}
 Moreover, we record in advance that  
 \begin{align}\label{eq:justifier}
    r-1 = -\frac{2}{\mu-1}\;\;\;\text{and so}\;\;\;(r-1)(1-\mu)=2.
 \end{align}
 \subsection{The absolutely continuous part}
 Our overall plan is to construct a $\bv$-function whose approximate gradient is contained in a suitable cone  $\mathcal{C}_{R,\delta}$ where $F=F_{0}$. To this end, we begin with: 
 \begin{lemma}\label{lem:wconstruct}
With $r$ and $\kappa$ as in \eqref{eq:randkappa}, there exists $\ell>0$ such that the ordinary differential equation 
\begin{align}\label{eq:ODE}
\begin{cases}
w'' + \kappa w^{\mu-2} = 0&\;\text{in}\;(-\ell,\ell),\\ 
w(0)=1,\;\;\;w'(0)=0
\end{cases}
\end{align}
has a unique solution $w\in\hold^{2}((-\ell,\ell);\R_{>0})$. 
 \end{lemma}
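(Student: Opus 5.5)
This is a local existence and uniqueness statement for an autonomous second-order ODE, so I would apply Picard--Lindel\"of to the equivalent first-order system. Write $y=(w,w')$, so the problem becomes
\begin{align*}
y'=\Psi(y)\coloneqq (y_{2},-\kappa\,\phi(y_{1})),\qquad y(0)=(1,0).
\end{align*}
The only issue is that $s\mapsto s^{\mu-2}$ is defined and smooth only for $s>0$, because $\mu-2>1$ need not be an integer. I would therefore work on the rectangle
\begin{align*}
Q\coloneqq\big[\tfrac12,\tfrac32\big]\times[-1,1]
\end{align*}
around the initial value $(1,0)$. On $Q$ the map $\phi(s)=s^{\mu-2}$ is $\hold^{\infty}$, hence $\Psi$ is Lipschitz there with some constant $\mathrm{Lip}$. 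It is also bounded on $Q$:
\begin{align*}
|\Psi|\leq M\coloneqq \big(1+\kappa^{2}(3/2)^{2(\mu-2)}\big)^{1/2}.
\end{align*}
Here $\kappa>0$ is the explicit constant from \eqref{eq:randkappa}.

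Picard--Lindel\"of then gives some $\ell>0$, for instance $\ell<\min\{\frac{1}{2M},\,1\}$, and a unique $\hold^{1}$ solution $y$ on $(-\ell,\ell)$ whose trajectory stays in $Q$. In particular $w=y_{1}\geq\frac12>0$ on $(-\ell,\ell)$. Since $w'=y_{2}$ is $\hold^{1}$, we get $w\in\hold^{2}((-\ell,\ell);\R_{>0})$, and $w$ solves \eqref{eq:ODE}. To fix $\ell$ symmetrically, I would apply the theorem forward and backward in time; alternatively, $w(-t)$ is again a solution because the equation is invariant under $t\mapsto -t$ and $w'(0)=0$.

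For uniqueness within the class $\hold^{2}((-\ell,\ell);\R_{>0})$, suppose $\tilde w$ is another positive $\hold^{2}$ solution. The set where $(\tilde w,\tilde w')$ stays in the interior of $Q$ is relatively open in $(-\ell,\ell)$ and contains $0$. On that set, Gr\"onwall's inequality with the Lipschitz constant $\mathrm{Lip}$ gives $\tilde w=w$. Because $w$ itself stays in $Q$ with the margin forced by the choice of $\ell$, a continuity argument shows this set is also closed, so it is all of $(-\ell,\ell)$. A cleaner route is to avoid the continuity argument entirely: any positive solution lies in the region $\{y_{1}>0\}$, where $\Psi$ is locally Lipschitz, and local uniqueness on overlapping intervals then gives global uniqueness on $(-\ell,\ell)$.

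The main obstacle, which is mild, is keeping $w$ away from $0$ so that the possibly non-smooth power $w^{\mu-2}$ is never evaluated near $w=0$. The choice of the rectangle $Q$ with $w\geq\frac12$ handles exactly this. If later parts of the paper need a more specific $\ell$, for example $\ell$ below the first zero of $w$, this can be quantified using the energy identity
\begin{align*}
\tfrac12 (w')^{2}+\tfrac{\kappa}{\mu-1}w^{\mu-1}=\tfrac{\kappa}{\mu-1},
\end{align*}
which shows $|w'|$ stays bounded and $w$ decreases from $1$. For the lemma as stated, however, local existence suffices.
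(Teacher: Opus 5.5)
Your proposal is correct and follows the same route as the paper: rewrite \eqref{eq:ODE} as the first-order system $Y'=(v,-\kappa w^{\mu-2})$ with $Y(0)=(1,0)$, apply Picard--Lindel\"of on a neighbourhood of $(1,0)$ where the right-hand side is Lipschitz, and shrink $\ell$ so that $w>0$. You additionally make explicit the rectangle $Q$, the admissible length $\ell<\frac{1}{2M}$, and the uniqueness among positive $\hold^{2}$-solutions, all of which the paper leaves implicit in its appeal to Picard--Lindel\"of.
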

\begin{proof}
We write \eqref{eq:ODE} as $Y'=\mathcal{F}(Y)$ and $Y(0)=(1,0)$, where 
\begin{align*}
Y=(w,v)\;\;\;\text{and}\;\;\;\mathcal{F}(Y)=\mathcal{F}(w,v)=(v,-\kappa w^{\mu-2}). 
\end{align*}
Clearly, $\mathcal{F}$ is Lipschitz in an open neighbourhood of $Y(0)$. Hence, the Picard-Lindel\"{o}f theorem gives the conclusion of the lemma for some $\ell>0$; diminishing $\ell$ if necessary, we may assume that $w>0$ on $(-\ell,\ell)$ due to $w(0)=1$. The proof is complete. 
\end{proof}
In the following, we fix $\ell>0$ and $w\in\hold^{1}((-\ell,\ell);\R_{>0})$ as provided by Lemma \ref{lem:wconstruct}. We define 
\begin{align}\label{eq:dexter}
a(x)\coloneqq w(x)^{-r},\qquad x\in (-\ell,\ell). 
\end{align}
Towards the construction of the requisite singular minimizer, we define 
\begin{align}\label{eq:dexter1}
v(x,y)=a(x)\mathrm{sgn}(y)|y|^{r},\qquad x\in (-\ell,\ell),\;y\neq 0.
\end{align}
For future reference, we compute for $x\in (-\ell,\ell)$ and $y\neq 0$: 
\begin{align}\label{eq:derivissimo1}
\partial_{x}v(x,y) = a'(x)\mathrm{sgn}(y)|y|^{r}\;\;\;\text{and}\;\;\;\partial_{y}v(x,y)=r a(x)|y|^{r-1}.
\end{align}
Recall that, by Lemma \ref{lem:wconstruct}, $w>0$ on $(-\ell,\ell)$, whereby $a>0$ on $(-\ell,\ell)$ too. It is then convenient to observe that 
\begin{align}\label{eq:observer}
\frac{\partial_{x}v(x,y)}{\partial_{y}v(x,y)} & = \frac{a'(x)}{ra(x)}y\;\;\;\text{and}\;\;\;\frac{a'(x)}{ra(x)}=-\frac{w'(x)}{w(x)}\qquad\text{if}\;y\neq 0. 
\end{align}
To construct our underlying domain $\Omega\subset\R^{2}$, we note that (after diminishing $\ell>0$ again if necessary) there exist constants $m,M,M'>0$ such that 
\begin{align}\label{eq:abound}
m\leq a(x)\leq M\;\;\;\text{and}\;\;\;|a'(x)|\leq M'\qquad\text{for all}\;x\in (-\ell,\ell). 
\end{align}
This is a direct consequence of the $\hold^{1}$-regularity of $a$ and $w>0$ on the interval provided by Lemma \ref{lem:wconstruct}. We now have:
\begin{lemma}\label{lem:contained1}
Let $R\geq 1$ and $\delta>0$. Then there exists $\varepsilon>0$ such that 
\begin{align*}
\nabla v(x,y)\in\mathcal{C}_{R,\delta}\qquad\text{for all}\;x\in (-\ell,\ell)\;\text{and}\;y\in (-\varepsilon,\varepsilon)\setminus\{0\}.
\end{align*}
\end{lemma}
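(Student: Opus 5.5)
The membership $\nabla v(x,y)\in\mathcal{C}_{R,\delta}$ amounts to two conditions, $\partial_{y}v(x,y)>R$ and $|\partial_{x}v(x,y)|<\delta\,\partial_{y}v(x,y)$. Both can be read off from the explicit formulas \eqref{eq:derivissimo1} together with the bounds \eqref{eq:abound}, which are uniform in $x\in(-\ell,\ell)$. The plan is to check each condition separately, producing one threshold $\varepsilon_{1}$ for the first and another $\varepsilon_{2}$ for the second. Then we take $\varepsilon\coloneqq\min\{\varepsilon_{1},\varepsilon_{2}\}$.

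\emph{Lower bound on the second component.} By \eqref{eq:derivissimo1} and \eqref{eq:abound}, for $y\neq0$,
\begin{align*}
\partial_{y}v(x,y)=r\,a(x)|y|^{r-1}\geq r\,m\,|y|^{r-1}.
\end{align*}
Since $r\in(0,1)$ by \eqref{eq:randkappa}, we have $r-1<0$, so $|y|^{r-1}\to\infty$ as $y\to0$. Hence $\partial_{y}v(x,y)>R$ holds as soon as
\begin{align*}
|y|<\varepsilon_{1}\coloneqq\Big(\frac{R}{rm}\Big)^{\frac{1}{r-1}}.
\end{align*}
This threshold does not depend on $x$.

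\emph{Cone condition.} Since $\partial_{y}v>0$ for $y\neq0$, the condition $|\partial_{x}v|<\delta\,\partial_{y}v$ is equivalent to $|\partial_{x}v/\partial_{y}v|<\delta$. By \eqref{eq:observer} this quotient equals $\frac{a'(x)}{r a(x)}\,y$, and \eqref{eq:abound} bounds it in absolute value by $\frac{M'}{rm}|y|$. The cone condition therefore holds whenever
\begin{align*}
|y|<\varepsilon_{2}\coloneqq\frac{\delta r m}{M'}.
\end{align*}
If $M'=0$, any $\varepsilon_{2}>0$ works.

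\emph{Main obstacle.} There is no real obstacle here. The only point needing care is uniformity in $x$, and that is exactly what \eqref{eq:abound} provides, since $\ell$ was already diminished so that $a$ is bounded away from $0$ and $a'$ is bounded on $(-\ell,\ell)$. The other thing to keep straight is the sign of $r-1$: the blow-up of $\partial_{y}v$ as $y\to0$ comes from $r<1$, which is where $\mu>3$ enters through $r\in(0,1)$. With $\varepsilon=\min\{\varepsilon_{1},\varepsilon_{2}\}$, both conditions hold for all $x\in(-\ell,\ell)$ and $0<|y|<\varepsilon$, which proves the lemma.
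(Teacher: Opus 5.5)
Your proof is correct and is essentially the paper's argument, with the same two thresholds, since $(R/(rm))^{1/(r-1)}=(rm/R)^{1/(1-r)}$. One small correction to your commentary: $r<1$ holds for every $\mu>1$, so the blow-up does not depend on $\mu>3$; what $\mu>3$ actually supplies is $r>0$, which you use for $\partial_{y}v>0$ (and for $rm>0$).
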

\begin{proof}
Put 
\begin{align}
\varepsilon \coloneqq \min\Big\{\Big(\frac{rm}{R}\Big)^{\frac{1}{1-r}},\frac{\delta rm}{M'}\Big\}. 
\end{align}
Now, if $y\in (-\varepsilon,\varepsilon)\setminus\{0\}$, then we have for all $x\in (-\ell,\ell)$:
\begin{align}\label{eq:cannon}
\begin{split}
|y|< \Big(\frac{rm}{R}\Big)^{\frac{1}{1-r}} & \stackrel{0<r<1}{\Longrightarrow}  |y|^{1-r}<\frac{rm}{R} \\ &\;\;\Longrightarrow R<rm|y|^{r-1} \stackrel{\eqref{eq:abound}}{\leq} ra(x)|y|^{r-1} \stackrel{\eqref{eq:derivissimo1}}{=} \partial_{y}v(x,y).
\end{split}
\end{align}
On the other hand, by \eqref{eq:observer}, it follows that 
\begin{align}\label{eq:cannonball}
\left\vert\frac{\partial_{x}v(x,y)}{\partial_{y}v(x,y)}\right\vert \leq \frac{M'}{rm}|y| <\frac{M'}{rm}\frac{\delta rm}{M'}=\delta.
\end{align}
Since $\partial_{y}v(x,y)=ra(x)|y|^{r-1}>0$, \eqref{eq:cannon} and \eqref{eq:cannonball} combine to $\nabla v(x,y)\in\mathcal{C}_{R,\delta}$ for $x\in (-\ell,\ell)$ and $y\in(-\varepsilon,\varepsilon)\setminus\{0\}$, as required. The proof is complete. 
\end{proof}
\subsection{Construction of the singular $\bv$-minimizer} 
Since $\mu>3$, we may pick the number $\delta_{0}>0$ from Theorem \ref{thm:muellipticextension} and fix $0<\delta<\delta_{0}$. In consequence, Theorem \ref{thm:muellipticextension} yields $R=R(\delta)>1$ such that there exists a $\mu$-elliptic $\hold^{2}$-integrand $F\in\hold^{2}(\R^{2})$ with $F=F_{0}$ on $\mathcal{C}_{R,\delta}$. Moreover, by Proposition \ref{prop:coincidence}\ref{item:coinc1}, we may assume that 
\begin{align}\label{eq:container}
\nabla F_{0}(z)=\nabla F(z)\in K\;\;\;\qquad\text{for all}\;z\in\mathcal{C}_{R,\delta}. 
\end{align}
With these choices of $\delta$ and $R$, we let $\ell>0$ be as in \eqref{eq:abound} and let $\varepsilon>0$ be as in Lemma \ref{lem:contained1}. 

For $-\ell<x<\ell$ and $-\varepsilon<y<\varepsilon$ with $y\neq 0$, we then conclude from \eqref{eq:derivissimo1} that 
\begin{align}\label{eq:derivissimo2}
|\nabla v(x,y)|=(|a'(x)|^{2}|y|^{2r} + r^{2}a(x)^{2}|y|^{2r-2})^{\frac{1}{2}}=|y|^{r-1}(|a'(x)|^{2}|y|^{2}+|a(x)|^{2}r^{2})^{\frac{1}{2}}
\end{align}
and therefore 
\begin{align}\label{eq:derivissimo3}
|y|^{r-1}mr\leq |\nabla v(x,y)|\leq |y|^{r-1}\sqrt{(M')^{2}\varepsilon^{2}+M^{2}r^{2}}. 
\end{align}
Now, \eqref{eq:derivissimo3} implies that 
\begin{align}\label{eq:mainintegrability}
\nabla v\in\lebe^{1}((-\ell,\ell)\times(-\varepsilon,\varepsilon);\R^{2})\Longleftrightarrow \int_{0}^{\varepsilon}y^{r-1}\,\dif y <\infty \Longleftrightarrow r>0 \stackrel{\eqref{eq:randkappa}}{\Longleftrightarrow} \mu>3. 
\end{align}
We subsequently put 
\begin{align}\label{eq:Omegadef}
\Omega\coloneqq \Big(-\frac{\ell}{2},\frac{\ell}{2}\Big) \times \Big(-\frac{\varepsilon}{2},\frac{\varepsilon}{2}\Big)\;\;\;\text{and}\;\;\; U\coloneqq (-\ell,\ell)\times(-\varepsilon,\varepsilon).
\end{align}
Next note that $v$ is continuous on $U$, whereby $\mu>3$ implies that $
v\in\sobo^{1,1}(U)$. We fix a jump height $J>0$ and we eventually define the requisite map by
\begin{align}\label{eq:ufinal}
\text{\fbox{$u(x,y)\coloneqq v(x,y)+J\mathbbm{1}_{U\cap\{y>0\}}(x,y)$,}}
\end{align}
which we initially consider on $U$. Since $v\in\sobo^{1,1}(U)$, $u\in\bv(U)\setminus\sobo^{1,1}(U)$. Moreover, crucially, $v$ itself solves the Euler--Lagrange equation, and the associated stress is continuous: 
\begin{proposition}\label{prop:stressedout}
Subject to the choices of $\delta,R,\varepsilon$ and $\ell$ fixed at the beginning of the present subsection, the associated stress 
\begin{align}\label{eq:stresscontinuity}
\sigma\coloneqq \nabla F(\nabla v)\qquad\text{\emph{is continuous on $U$}}
\end{align}
and is solenoidal in the sense of distributions: 
\begin{align}\label{eq:stresssolenoidality}
\mathrm{div}(\sigma)=0\qquad\text{in}\;\mathscr{D}'(U).
\end{align}
\end{proposition}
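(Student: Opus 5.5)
The plan is to compute $\sigma$ explicitly off the line $\{y=0\}$ and check that it extends continuously across it. The solenoidality should then reduce to the ODE \eqref{eq:ODE}, and the distributional statement will follow by a cut-off argument near $\{y=0\}$.

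\emph{Step 1: identifying $\sigma$.} By Lemma \ref{lem:contained1} and the choice of $\varepsilon$, we have $\nabla v(x,y)\in\mathcal{C}_{R,\delta}$ for all $(x,y)\in U$ with $y\neq 0$. Since $\mathcal{C}_{R,\delta}$ is open and $F=F_{0}$ on it, we get $\nabla F=\nabla F_{0}$ there. Hence $\sigma=\nabla F_{0}(\nabla v)$ on $U\setminus\{y=0\}$, so \eqref{eq:firstvariation} applies with $z_{1}=a'(x)\mathrm{sgn}(y)|y|^{r}$ and $z_{2}=ra(x)|y|^{r-1}$ from \eqref{eq:derivissimo1}. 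Using \eqref{eq:observer}, the first component is
\begin{align*}
\sigma_{1}=2b\frac{z_{1}}{z_{2}}=-2b\frac{w'(x)}{w(x)}\,y .
\end{align*}
For the second component, \eqref{eq:justifier} gives $z_{2}^{1-\mu}=(ra)^{1-\mu}|y|^{2}$. Since $a^{1-\mu}=w^{r(\mu-1)}=w^{\mu-3}$ and $\kappa b=A(\mu-2)r^{1-\mu}$ by \eqref{eq:randkappa}, we obtain
\begin{align*}
\sigma_{2}=L-y^{2}\Big(b\kappa\, w(x)^{\mu-3}+b\frac{w'(x)^{2}}{w(x)^{2}}\Big).
\end{align*}
Both expressions are continuous on all of $U$ because $w\in\hold^{2}$ and $w>0$, and they take the value $(0,L)$ on $\{y=0\}$. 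Note that the sign function has disappeared from both formulas. This gives \eqref{eq:stresscontinuity}, with $\sigma\in\hold^{1}(U;\R^{2})$ in fact; the value of $\sigma$ on the null set $\{y=0\}$ is irrelevant for the continuous representative.

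\emph{Step 2: pointwise divergence.} The derivatives are
\begin{align*}
\partial_{x}\sigma_{1}=-2by\,\frac{w''w-(w')^{2}}{w^{2}},\qquad \partial_{y}\sigma_{2}=-2by\Big(\kappa w^{\mu-3}+\frac{(w')^{2}}{w^{2}}\Big).
\end{align*}
Adding them, the $(w')^{2}$ terms cancel, and
\begin{align*}
\mathrm{div}\,\sigma=-\frac{2by}{w}\big(w''+\kappa w^{\mu-2}\big)=0
\end{align*}
by \eqref{eq:ODE}. This identity in fact holds classically on all of $U$, since the formulas from Step 1 define a $\hold^{1}$-field on $U$.

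\emph{Step 3: distributional solenoidality.} Because the explicit formulas define a $\hold^{1}$-map on $U$ that agrees with $\sigma$ off a null set, \eqref{eq:stresssolenoidality} follows directly by integrating by parts against $\varphi\in\hold_{c}^{\infty}(U)$.

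As a more robust alternative, one can excise the strip $\{|y|<t\}$, apply Gau\ss--Green on the two remaining pieces, and obtain boundary terms $\pm\int\sigma_{2}(x,\pm t)\varphi(x,\pm t)\,\dif x$. These cancel in the limit $t\searrow 0$ by the continuity of $\sigma$.

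The main obstacle is the bookkeeping in Step 1. One must verify that the exponent identity $(r-1)(1-\mu)=2$ turns $z_{2}^{1-\mu}$ into exactly $y^{2}$, and that the power $a^{1-\mu}=w^{\mu-3}$ matches the nonlinearity in \eqref{eq:ODE}. This matching is precisely what the choices in \eqref{eq:randkappa} are designed to produce.
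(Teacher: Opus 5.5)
Your proposal is correct and follows the paper's proof: you identify $\sigma=\nabla F_{0}(\nabla v)$ off $\{y=0\}$ via Lemma \ref{lem:contained1}, compute it explicitly using \eqref{eq:justifier} and \eqref{eq:observer} to see that it extends continuously with value $(0,L)$ on $\{y=0\}$, and reduce the pointwise divergence to \eqref{eq:ODE}. The only difference is cosmetic: you note that the extended field is $\hold^{1}$ on all of $U$, so the distributional identity is immediate, whereas the paper splits $U$ into $U^{\pm}$ and lets the boundary terms on $\{y=0\}$ cancel, which is exactly your ``robust alternative''.
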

\begin{proof}
Since $\ell,\varepsilon>0$ have been adjusted due to \eqref{eq:abound} and Lemma \ref{lem:contained1}, we have $\nabla v(x,y)\in\mathcal{C}_{R,\delta}$ for every $x\in (-\ell,\ell)$ and $y\in (-\varepsilon,\varepsilon)\setminus\{0\}$. For such values of $(x,y)$, we thus have $F(\nabla v(x,y))=F_{0}(\nabla v(x,y))$ and so $\sigma(x,y)=\nabla  F_{0}(\nabla v(x,y))$. 

Based on this observation, the continuity assertion on $\sigma$ is a consequence of \eqref{eq:firstvariation}:
\begin{align}
\sigma(x,y) & \stackrel{\eqref{eq:firstvariation}}{=} \left(\begin{array}{c} \frac{2b\partial_{x}v(x,y)}{\partial_{y}v(x,y)} \\ L + (2-\mu)A(\partial_{y}v(x,y))^{1-\mu}-\frac{b \partial_{x}v(x,y)^{2}}{\partial_{y}v(x,y)^{2}}\end{array} \right) \\ 
& \!\!\!\!\!\!\!\stackrel{\eqref{eq:justifier},\,\eqref{eq:observer}_{1}}{=} 
\left(\begin{array}{c} 2b\frac{a'(x)}{ra(x)}y \\ L + (2-\mu)A(ra(x))^{1-\mu}|y|^{2}-b \Big(\frac{a'(x)}{ra(x)}\Big)^{2}y^{2}\end{array} \right) \label{eq:sigmacompute}\\ 
& \!\!\! \stackrel{\eqref{eq:observer}_{2}}{=}  \left(\begin{array}{c} -2b\frac{w'(x)}{w(x)}y \\ L + (2-\mu)A(rw(x)^{-r})^{1-\mu}|y|^{2}-b \Big(\frac{w'(x)}{w(x)}\Big)^{2}y^{2}\end{array} \right).
\end{align}
Clearly, $\sigma$ is continuous up to the boundary away from $\{y=0\}$. Now, if $|y|\searrow 0$, then \eqref{eq:sigmacompute} shows that $\sigma(x,y)\to(0,L)$ uniformly in $x$. From here, the claimed continuity assertion \eqref{eq:stresscontinuity} follows. 

Towards \eqref{eq:stresssolenoidality}, let $y\neq 0$. We then have 
\begin{align}
\mathrm{div}(\sigma(x,y)) & = -2b\Big(\frac{w''(x)}{w(x)}-\Big(\frac{w'(x)}{w(x)}\Big)^{2}\Big)y  + 2(2-\mu)A(rw(x)^{-r})^{1-\mu}y-2b \Big(\frac{w'(x)}{w(x)}\Big)^{2}y \notag\\ 
& = -2by\frac{w''(x)}{w(x)} + 2(2-\mu)A(rw(x)^{-r})^{1-\mu}y \label{eq:rita}\\ 
& \!\!\!\stackrel{\eqref{eq:ODE}}{=} 2\kappa by w(x)^{\mu-3} + 2(2-\mu)A(rw(x)^{-r})^{1-\mu}y \eqqcolon \mathrm{IV} = 0.  \notag
\end{align}
To see the last identity, note that $-r(1-\mu)=\mu-3$ by $\eqref{eq:randkappa}_{1}$, whereby $\eqref{eq:randkappa}_{2}$ yields  
\begin{align*}
\mathrm{IV} & = 2y \Big(\kappa b w(x)^{\mu-3} + (2-\mu)A(rw(x)^{-r})^{1-\mu} \Big) \\ & \!\!\!\stackrel{\eqref{eq:randkappa}_{2}}{=} 2y \Big(\frac{A(\mu-2)}{b}r^{1-\mu} b + (2-\mu)Ar^{1-\mu} \Big)w(x)^{\mu-3} = 0. 
\end{align*}
In conclusion, $\mathrm{div}(\sigma(x,y))=0$ whenever $y\neq 0$. To arrive at \eqref{eq:stresssolenoidality}, let $\varphi\in\hold_{c}^{\infty}(U)$ be arbitrary. Writing $U^{\pm}\coloneqq \{(x,y)\in U\colon\;\pm y>0\}$, an integration by parts gives us 
\begin{align*}
\int_{U}\sigma\cdot\nabla\varphi\,\,\dif(x,y)& = \int_{U^{+}}\sigma\cdot\nabla\varphi\,\,\dif(x,y) + \int_{U^{-}}\sigma\cdot\nabla\varphi\,\,\dif(x,y) \\ 
& \!\!\!\!\stackrel{\eqref{eq:rita}}{=} \int_{\partial U^{+}\cap\{y=0\}}\varphi\sigma\cdot\nu_{\partial U^{+}}\,\,\dif\mathscr{H}^{1}(x,y) + \int_{\partial U^{-}\cap\{y=0\}}\varphi\sigma\cdot\nu_{\partial U^{-}}\,\,\dif\mathscr{H}^{1}(x,y) \\ & = 0.
\end{align*}
Here, the last equality holds because $\sigma(x,0)=(0,L)$ and the outer unit normals satisfy $\nu_{\partial U^{+}}=-\nu_{\partial U^{-}}$ along $\mathrm{spt}(\varphi)\cap\{y=0\}$. By arbitrariness of $\varphi$, we arrive at \eqref{eq:stresssolenoidality} and so the proof is complete. 
\end{proof}
We now come to the proof of our Main Theorem \ref{thm:main} in the two dimensional case. For $N=1$ and $n=2$, it is indeed a direct consequence of the following result:
\begin{theorem}[$\bv$-minimality of $u$]\label{thm:main2a}
Let $\mu>3$, and let $\delta,R,\varepsilon$ and $\ell$ be as fixed as in the beginning of the present subsection. Moreover, define $\Omega$ as in \eqref{eq:Omegadef} and let $F\in\hold^{2}(\R^{2})$ be the $\mu$-elliptic integrand as specified in the beginning of the subsection. Then $u\colon\Omega\to\R$ from \eqref{eq:ufinal} belongs to $(\bv(\Omega)\cap\lebe^{\infty}(\Omega))\setminus\sobo^{1,1}(\Omega)$ and is a \emph{$\mathrm{BV}$-minimizer} of the functional 
\begin{align*}
\mathscr{F}[\psi;\Omega]=\int_{\Omega}F(\nabla \psi)\,\,\dif x
\end{align*}
\emph{with respect to its own boundary values $\mathrm{tr}_{\partial\Omega}(u)$}. In particular, there exists $u_{0}\in\sobo^{1,1}(\Omega)$ such that 
\begin{align}\label{eq:brocolli}
\overline{\mathscr{F}}_{u_{0}}^{*}[u;\Omega] \leq \overline{\mathscr{F}}_{u_{0}}^{*}[\psi;\Omega]\qquad\text{for all}\;\psi\in\bv(\Omega). 
\end{align}
\end{theorem}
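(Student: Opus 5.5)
The plan is a calibration (duality) argument with the continuous solenoidal stress $\sigma=\nabla F(\nabla v)$ from Proposition \ref{prop:stressedout} as the certificate. First, the regularity claims are direct. By \eqref{eq:abound} and $|y|^{r}\leq\varepsilon^{r}$ we have $u\in\lebe^{\infty}(\Omega)$. By \eqref{eq:mainintegrability}, $v\in\sobo^{1,1}(U)$, and therefore
\begin{align*}
\D u=\nabla v\,\mathscr{L}^{2}+J\,e_{2}\,\mathscr{H}^{1}\mres(\Omega\cap\{y=0\}).
\end{align*}
Since the jump part is nontrivial, $u\notin\sobo^{1,1}(\Omega)$. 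As Dirichlet datum we take $u_{0}\in\sobo^{1,1}(\Omega)$ with $\mathrm{tr}_{\partial\Omega}(u_{0})=\mathrm{tr}_{\partial\Omega}(u)$; such a $u_{0}$ exists by surjectivity of the trace onto $\lebe^{1}(\partial\Omega)$ (Gagliardo). With this choice the boundary penalization in \eqref{eq:integral} vanishes for $u$.

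Next, we evaluate the energy of $u$. Since $\nabla v(x,y)\in\mathcal{C}_{R,\delta}$ for $y\neq0$, Fenchel--Young equality \eqref{eq:FenchelLegendre} for $F=G_{0}^{*}$ gives $F(\nabla v)=\sigma\cdot\nabla v-G_{0}(\sigma)$ a.e. On the jump set we need $F^{\infty}(e_{2})=L=\sigma\cdot e_{2}$, because $\sigma(x,0)=(0,L)$. Here $F^{\infty}(e_{2})=\sup_{p\in\overline K}p_{2}$ by Lemma \ref{lem:fenchel}, since $\mathrm{dom}(G_{0})=\overline K$ and $F^{*}=G_{0}$ by Lemma \ref{lem:fenchelmoreau}. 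We have $p_{2}<L$ on $K$ and $(0,L)\in\partial K$, so the supremum equals $L$. Hence
\begin{align*}
\overline{\mathscr{F}}_{u_{0}}^{*}[u;\Omega]=\int_{\Omega}\langle\sigma,\dif\D u\rangle-\int_{\Omega}G_{0}(\sigma)\,\dif x.
\end{align*}

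For a competitor $\psi\in\bv(\Omega)$, we bound each term of \eqref{eq:integral} from below using $\sigma(x)\in\overline K$ pointwise:
\begin{align*}
F(\nabla\psi)&\geq\sigma\cdot\nabla\psi-G_{0}(\sigma),\\
F^{\infty}\Big(\tfrac{\dif\D^{s}\psi}{\dif|\D^{s}\psi|}\Big)&\geq\sigma\cdot\tfrac{\dif\D^{s}\psi}{\dif|\D^{s}\psi|},\\
F^{\infty}\big(\mathrm{tr}(u_{0}-\psi)\otimes\nu\big)&\geq\mathrm{tr}(u_{0}-\psi)\,\langle\sigma,\nu\rangle.
\end{align*}
The second and third bounds follow from the support-function representation in Lemma \ref{lem:fenchel}, using the continuity of $\sigma$ up to $\partial\Omega$, which holds because $\overline\Omega\subset U$. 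Summing, we get
\begin{align*}
\overline{\mathscr{F}}_{u_{0}}^{*}[\psi]\geq\int_{\Omega}\langle\sigma,\dif\D\psi\rangle+\int_{\partial\Omega}\mathrm{tr}(u_{0}-\psi)\langle\sigma,\nu\rangle\,\dif\mathscr{H}^{1}-\int_{\Omega}G_{0}(\sigma)\,\dif x.
\end{align*}
Lemma \ref{lem:IBP} applies since $\sigma$ is continuous and solenoidal on $U\Supset\Omega$. It turns the first two terms into $\int_{\partial\Omega}\mathrm{tr}(u_{0})\langle\sigma,\nu\rangle\,\dif\mathscr{H}^{1}$, which does not depend on $\psi$. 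The same computation for $u$, where all inequalities are equalities and $\mathrm{tr}(u)=\mathrm{tr}(u_{0})$, produces exactly this quantity. This proves \eqref{eq:brocolli}.

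The main obstacle I expect is the integrability of $G_{0}(\sigma)$ and the legitimacy of the pointwise Fenchel bounds for the measure terms. For the first point, $\sigma$ takes values in $\overline K$ and $G_{0}$ is bounded on $\overline K$, so $G_{0}(\sigma)$ is integrable. For the second point, I will phrase the singular-part bound through the polar decomposition and the inequality $\sigma\cdot z\leq F^{\infty}(z)$ for all $z$, valid whenever $\sigma\in\overline K$. One must also check that the jump density of $\D u$ is exactly $+e_{2}$ rather than $-e_{2}$; this holds because $J>0$ and the function jumps upward across $\{y=0\}$.
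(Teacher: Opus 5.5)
Your proposal is correct and follows essentially the same calibration argument as the paper. It uses Fenchel--Young with $F^{*}=G_{0}$ for the absolutely continuous part, the support-function bound $F^{\infty}(\xi)\geq\sigma\cdot\xi$ (from $\sigma\in\overline{K}=\mathrm{dom}(F^{*})$) for the singular and boundary terms, Lemma \ref{lem:IBP} to make the lower bound independent of $\psi$, and equality in every step for $u$. The only harmless deviation is that you get $F^{\infty}(e_{2})=L$ as $\sup_{p\in\overline{K}}p_{2}$ via Lemma \ref{lem:fenchel}, whereas the paper computes it from $F=F_{0}$ on $\mathcal{C}_{R,\delta}$; both routes are valid.
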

\begin{proof}
We start by noting that $u$ is a priori also defined on the set $U$ from \eqref{eq:Omegadef}, on which $\sigma=\nabla F(\nabla v)$ is continuous and distributionally solenoidal by Proposition \ref{prop:stressedout}.

Let $\psi\in\bv(\Omega)$ be arbitrary with Lebesgue--Radon--Nikod\'{y}m decomposition $\D\psi=\nabla\psi \mathscr{L}^{2}+\D^{s}\psi$ of its measure gradient. By convexity of $F$, Fenchel's inequality gives us  
\begin{align}\label{eq:combo1}
F(\nabla \psi)\geq \sigma\cdot\nabla \psi - F^{*}(\sigma)\qquad\text{$\mathscr{L}^{2}$-a.e. in $\Omega$}.
\end{align}
On the other hand, $G_{0}\colon\R^{2}\to\R\cup\{+\infty\}$ from \eqref{eq:swallowtears} is convex, proper and lower semicontinuous. Hence, by the Fenchel--Moreau Theorem (see Lemma \ref{lem:fenchelmoreau}), $F=G_{0}^{*}$ gives us $F^{*}=G_{0}^{**}=G_{0}$. In particular, by \eqref{eq:swallowtears}, $\mathrm{dom}(F^{*})=\overline{K}$. Now, Lemma \ref{lem:contained1} gives us $\nabla v(x,y)\in\mathcal{C}_{R,\delta}$ for $(x,y)\in U$ with $y\neq 0$, and so $\sigma=\nabla F_{0}(\nabla v)=\nabla F(\nabla v)\in K\subset\mathrm{dom}(F^{*})$ on $\Omega\setminus\{y=0\}$ by \eqref{eq:container}. Thus, Lemma \ref{lem:fenchel} gives us 
\begin{align*}
F^{\infty}(\xi)= \sup_{\eta\in{\mathrm{dom}(F^{*})}}\eta\cdot\xi \geq \sigma(x,y)\cdot\xi\qquad\text{for all $(x,y)\in\overline{\Omega}\setminus\{y=0\}$ and all $\xi\in\R^{2}$}. 
\end{align*}
For $x\in (-
\frac{\ell}{2},\frac{\ell}{2})$, the continuity of $\sigma$ allows us to send $|y|\searrow 0$ in the preceding inequality. Thus, 
\begin{align}\label{eq:doakes}
F^{\infty}(\xi)\geq  \sigma(x,y)\cdot\xi\qquad\text{for all $(x,y)\in\overline{\Omega}$ and all $\xi\in\R^{2}$}, 
\end{align}
where, for $(x,y)=(x,0)$, \eqref{eq:doakes} reads 
\begin{align}\label{eq:doakesAA}
F^{\infty}(\xi)\geq  (0,L)\cdot\xi\qquad\text{for all  $\xi\in\R^{2}$}. 
\end{align}
As a first consequence, we thus obtain 
\begin{align}\label{eq:combo2}
\int_{\Omega}F^{\infty}\Big(\,\frac{\dif \mathrm{D}^{s}\psi}{\dif|\mathrm{D}^{s}\psi|}\Big)\,\dif|\mathrm{D}^{s}\psi| \stackrel{\eqref{eq:doakes}}{\geq} \int_{\Omega}\sigma\cdot\,\dif\mathrm{D}^{s}\psi.
\end{align}
Now, \eqref{eq:combo1} and \eqref{eq:combo2} combine to the bulk inequality
\begin{align}
\int_{\Omega}F(\nabla \psi)\,\,\dif (x,y) + \int_{\Omega}F^{\infty}\Big(\,\frac{\dif \mathrm{D}^{s}\psi}{\dif|\mathrm{D}^{s}\psi|}\Big)\,\,\dif|\mathrm{D}^{s}\psi| & \geq \int_{\Omega}\sigma\cdot\nabla \psi-F^{*}(\sigma)\,\,\dif (x,y) + \int_{\Omega}\sigma\cdot\,\dif\mathrm{D}^{s}\psi \notag\\ 
& =  \int_{\Omega}\sigma\cdot\,\dif\mathrm{D}\psi - \int_{\Omega}F^{*}(\sigma)\,\,\dif (x,y).\label{eq:boyd}
\end{align}
Now let $u_{0}\in\sobo^{1,1}(\Omega)$ be such that $\mathrm{tr}_{\partial\Omega}(u_{0})=\mathrm{tr}_{\partial\Omega}(u)$ $\mathscr{H}^{1}$-a.e. on $\partial\Omega$. Even though $u\in\bv(\Omega)\setminus\sobo^{1,1}(\Omega)$, this is possible since the trace operator $\mathrm{tr}_{\partial\Omega}\colon\sobo^{1,1}(\Omega)\to\lebe^{1}(\partial\Omega)$ is surjective. By \eqref{eq:doakes}, this implies that 
\begin{align}\label{eq:lumen}
F^{\infty}((\mathrm{tr}_{\partial\Omega}(u_{0})-\mathrm{tr}_{\partial\Omega}(\psi))\nu_{\partial\Omega}) & \geq (\mathrm{tr}_{\partial\Omega}(u_{0})-\mathrm{tr}_{\partial\Omega}(\psi))\sigma\cdot\nu_{\partial\Omega}\qquad\mathscr{H}^{1}\text{-a.e. on $\partial\Omega$}. 
\end{align}
By our definition of $\Omega$ and $U$ from \eqref{eq:Omegadef}, we may extend $\psi$ to a (non-relabelled) function $\psi\in\bv(U)$. Since $\sigma\colon U\to \R^{2}$ is continuous and distributionally solenoidal, Lemma \ref{lem:IBP} yields  
\begin{align}\label{eq:laguerta}
\int_{\Omega}\sigma\cdot\,\,\dif\D\psi = \int_{\partial\Omega}\mathrm{tr}_{\partial\Omega}(\psi)\sigma\cdot\nu_{\partial\Omega}\,\,\dif\mathscr{H}^{1}. 
\end{align}
In particular, we arrive at the following bound: 
\begin{align}\label{eq:wcompare}
\begin{split}
\overline{\mathscr{F}}_{u_{0}}^{*}[\psi;\Omega] & \stackrel{\eqref{eq:boyd},\,\eqref{eq:lumen}}{\geq} \int_{\Omega}\sigma\cdot\,\dif\mathrm{D}\psi - \int_{\Omega}F^{*}(\sigma)\,\,\dif (x,y) + \int_{\partial\Omega} (\mathrm{tr}_{\partial\Omega}(u_{0})-\mathrm{tr}_{\partial\Omega}(\psi))\sigma\cdot\nu_{\partial\Omega}\,\,\dif\mathscr{H}^{1} \\ 
& \;\;\;\;\stackrel{\eqref{eq:laguerta}}{=} - \int_{\Omega}F^{*}(\sigma)\,\,\dif (x,y) + \int_{\partial\Omega}\mathrm{tr}_{\partial\Omega}(u_{0})\sigma\cdot\nu_{\partial\Omega}\,\,\dif\mathscr{H}^{1}. 
\end{split}
\end{align}
With $u$ as in \eqref{eq:ufinal}, we let  $\D u=\nabla u\mathscr{L}^{2} + \D^{s}u$ be its Lebesgue--Radon-Nikod\'{y}m decomposition of its measure gradient. The very definition \eqref{eq:ufinal} particularly implies that $\nabla u=\nabla v$ $\mathscr{L}^{2}$-a.e. in $\Omega$ (in fact, everywhere away from $\{y=0\}$). Therefore, we have 
\begin{align*}
F(\nabla u)=F(\nabla v)=\sigma\cdot\nabla v - F^{*}(\sigma)=\sigma\cdot\nabla u - F^{*}(\sigma)\qquad\mathscr{L}^{2}\text{-a.e. in $\Omega$}
\end{align*}
by the case of equality in Fenchel's inequality. Hence, 
\begin{align}\label{eq:fenchel1A}
\int_{\Omega}F(\nabla u)\,\,\dif (x,y) = \int_{\Omega}\sigma\cdot\nabla u\,\,\dif (x,y) - \int_{\Omega}F^{*}(\sigma)\,\,\dif (x,y).
\end{align}
As a crucial point, we now note that 
\begin{align}\label{eq:densitycontained}
\D^{s}u = Je_{2}\mathscr{H}^{1}\mres (\Omega\cap\{y=0\}).
\end{align}
By the very construction of the cones $\mathcal{C}_{R,\delta}$, see Figure \ref{fig:cone}, $\frac{1}{t}e_{2}\in\mathcal{C}_{R,\delta}$ holds  for all sufficiently small $t>0$. Since $F=F_{0}$ on $\mathcal{C}_{R,\delta}$, we thus obtain 
\begin{align}\label{eq:baptista}
F^{\infty}(e_{2})=\lim_{t\searrow 0}tF\Big(\frac{e_{2}}{t}\Big)= \lim_{t\searrow 0}tF_{0}\Big(\frac{e_{2}}{t}\Big) \stackrel{\eqref{eq:main}}{=} L.
\end{align}
By \eqref{eq:densitycontained},  \eqref{eq:baptista} and the positive $1$-homogeneity of $F^{\infty}$, see \eqref{eq:1homorecess}, it follows that 
\begin{align}\label{eq:recessionidentity}
\begin{split}
\int_{\Omega}F^{\infty}\Big(\,\frac{\dif\D^{s}u}{\dif|\D^{s}u|}\Big)\,\,\dif|\D^{s}u| & \stackrel{\eqref{eq:densitycontained}}{=} J\int_{\Omega\cap\{y=0\}}F^{\infty}(e_{2})\,\,\dif\mathscr{H}^{1} \\ & \stackrel{\eqref{eq:baptista}}{=} JL\mathscr{H}^{1}(\Omega\cap\{y=0\}) = JL\ell.
\end{split}
\end{align}
Recalling that $\sigma$ is continuous with $\sigma=(0,L)$ on $\Omega\cap\{y=0\}$, we have 
\begin{align}\label{eq:fenchelsingular}
\begin{split}
\int_{\Omega}\sigma\cdot\,\,\dif\D^{s}u & \stackrel{\eqref{eq:densitycontained}}{=} J\int_{\Omega\cap\{y=0\}}\sigma\cdot e_{2}\,\,\dif\mathscr{H}^{1}  = JL\ell\\ & \stackrel{\eqref{eq:recessionidentity}}{=} \int_{\Omega}F^{\infty}\Big(\,\frac{\dif\D^{s}u}{\dif|\D^{s}u|}\Big)\,\,\dif|\D^{s}u|
\end{split}
\end{align}
To conclude the proof, we note that \eqref{eq:laguerta} also holds with $\psi$ being replaced by $u$. In consequence, we find that
\begin{align*}
\int_{\Omega}F(\nabla u)\,\,\dif (x,y) + \int_{\Omega}F^{\infty}\Big(\,\frac{\dif\D^{s}u}{\dif|\D^{s}u|} \Big)\,\,\dif|\D^{s}u| & \stackrel{\eqref{eq:fenchel1A},\,\eqref{eq:fenchelsingular}}{=} \int_{\Omega}\sigma\cdot\,\dif\D u-\int_{\Omega}F^{*}(\sigma)\,\,\dif(x,y) \\ 
& \;\;\;\;\stackrel{\eqref{eq:laguerta}}{=} \int_{\partial\Omega}\mathrm{tr}_{\partial\Omega}(u_{0})\sigma\cdot\nu_{\partial\Omega}\,\,\dif\mathscr{H}^{1} \\ & \;\;\;\;\;\;\,- \int_{\Omega}F^{*}(\sigma)\,\,\dif (x,y),  
\end{align*}
where we used that $\mathrm{tr}_{\partial\Omega}(u_{0})=\mathrm{tr}_{\partial\Omega}(u)$ $\mathscr{H}^{1}$-a.e. on $\partial\Omega$. The latter also gives us 
\begin{align}\label{eq:quinn}
\overline{\mathscr{F}}_{u_{0}}^{*}[u;\Omega] = \int_{\partial\Omega}\mathrm{tr}_{\partial\Omega}(u_{0})\sigma\cdot\nu_{\partial\Omega}\,\,\dif\mathscr{H}^{1} - \int_{\Omega}F^{*}(\sigma)\,\,\dif (x,y),
\end{align}
Going back to \eqref{eq:wcompare} and recalling that $\psi\in\bv(\Omega)$ was arbitrary, \eqref{eq:quinn} gives us 
\begin{align*}
\overline{\mathscr{F}}_{u_{0}}^{*}[\psi;\Omega] \geq \overline{\mathscr{F}}_{u_{0}}^{*}[u;\Omega]\qquad\text{for all}\;\psi\in\bv(\Omega). 
\end{align*}
This is \eqref{eq:brocolli}, and since $u\in(\lebe^{\infty}(\Omega)\cap\bv(\Omega))\setminus\sobo^{1,1}(\Omega)$, the proof is complete. 
\end{proof}
We conclude this section with a remark. 
\begin{remark}
The reader might notice that the reduction to minimality of $u$ for $\overline{\mathscr{F}}_{u_{0}}^{*}[-;\Omega]$ with respect to its own boundary values strongly relies on the fact that the prescribed traces are discontinuous. Therefore, in particular, Theorem \ref{thm:main} does not make any assertion on the non-$\sobo^{1,1}$-regularity of $\bv$-minimizers (even for $\mu>3$) if the boundary data belong to a better space than the trace space $\lebe^{1}$ of $\bv$ or $\sobo^{1,1}$, respectively. 
\end{remark}

\medskip 
\noindent \textbf{AI-Statement.} The mathematical arguments displayed in the present paper are due to the author. Chat GPT (GPT-6 Astra) has been used for some of the BibTex files, and to find typos and grammar errors in a preliminary version of the manuscript. Moreover, Google Gemini (Gemini 3.8 Flash) helped with fitting the convex set $K$ to the parabola in TikZ, see Figure \ref{fig:mod}; apart from that, the figures contained in the present manuscript have been created by the author exclusively. Lastly, Chat GPT and Gemini (in the versions displayed above) have been used to check the mathematical arguments on September 23, 2026. 
\bibliographystyle{alpha}		
\bibliography{references}

\end{document}